\pgfplotsset{compat=newest, ticks=none}
\newsavebox{\measure@tikzpicture}
	\def\tikz@width{#1}%
	\def\tikzscale{1}\begin{lrbox}{\measure@tikzpicture}%
	\edef\tikzscale{\pgfmathresult}%
\tikzset{->-/.style={decoration={
			markings,
			mark=at position #1 with {\arrow{>}}},postaction={decorate}}}
\tikzset{cross/.style={cross out, draw=black, minimum size=2*(#1-\pgflinewidth), inner sep=0pt, outer sep=0pt},
	cross/.default={3pt}}
\newcommand{%
	\def\svgwidth{}
	\import{./figures/}{.pdf_tex}
}[2]{%
	\def\svgwidth{#1}
	\import{./figures/}{#2.pdf_tex}
}
\newcommand{\bigzero}{\mbox{\normalfont\Large\bfseries 0}}
\newcommand\numberthis{\addtocounter{equation}{1}\tag{\theequation}}
\newcommand{\R}{\mathbb{R}}
\newcommand{\Q}{\mathbb{Q}}
\newcommand{\CE}{\mathcal{E}}
\newcommand{\nin}{\notin}
\newcommand{\pib}{\overline{\pi}}
\DeclareMathOperator{\Ima}{Im}
\newcommand{\sbc}{\mathscr{C}}
\newcommand{\NHIM}{\mathcal{N}}
\theoremstyle{plain}
\newtheorem{thm}{Theorem}[section]
\newtheorem{proposition}[thm]{Proposition}
\newtheorem{corollary}[thm]{Corollary}
\newtheorem{lemma}[thm]{Lemma}
\newtheorem{conjecture}[thm]{Conjecture}
\theoremstyle{definition}
\newtheorem{definition}[thm]{Definition}
\newtheorem{remark}[thm]{Remark}
\numberwithin{equation}{section}
\numberwithin{figure}{section}
\patchcmd{\subsubsection}{-.5em}{.5em}{}{}
\patchcmd{\subsection}{-.5em}{.5em}{}{}
\title[Regularisation for Simultaneous Binary Collisions]{On the $ C^{8/3} $-Regularisation of Simultaneous Binary Collisions in the Collinear 4-Body Problem}
\begin{document}
	\author{Nathan Duignan}
	\address[Nathan Duignan]{School of Mathematics and Statistics, University of Sydney, Camperdown, 2006 NSW, Australia}
	
	\author{Holger R.~Dullin}
	\address[Holger R.~Dullin]{School of Mathematics and Statistics, University of Sydney, Camperdown, 2006 NSW, Australia}
	\date{}
	
	\begin{abstract}
		The singularity at a simultaneous binary collision is explored in the collinear 4-body problem. It is known that any attempt to remove the singularity via block regularisation will result in a regularised flow that is no more than \( C^{8/3} \) differentiable with respect to initial conditions. Through a blow-up of the singularity, this loss of differentiability is investigated and a new proof of the $ C^{8/3} $ regularity is provided. In the process, it is revealed that the collision manifold consists of two manifolds of normally hyperbolic saddle singularities which are connected by a manifold of heteroclinics. By utilising recent work on transitions near such objects and their normal forms, an asymptotic series of the transition past the singularity is explicitly computed. It becomes remarkably apparent that the finite differentiability at $ 8/3 $ is due to the inability to construct a set of integrals local to the simultaneous binary collision. The finite differentiability is shown to be independent from a choice of initial condition or value of the masses. 
	\end{abstract}
	\maketitle
	\section{Introduction}

	Of central importance in the $ n $-body problem is the fact that isolated binary collisions can be regularised; a singular change of space and time variables allows trajectories to pass analytically through binary collisions unscathed. This so called Levi-Civita regularisation provides a flow smooth with respect to initial conditions. Curiously, when two binary collisions occur simultaneously, we are not so fortunate. In \cite{Martinez1999}, Martinez and Sim\'{o} gave strong evidence to conjecture the regularised flow, in a neighbourhood of the simultaneous binary collision, is at best $ C^{8/3} $. Remarkably, the conjecture was confirmed for some sub-problems of the 4-body problem \cite{Martinez2000}, including the collinear and trapezoidal problems. Despite this, the conjecture remains open for the collinear or planar $ n $-body with $ n >4 $, and the planar $ 4 $-body problem \cite{simoSomeQuestionsLooking}. 
	
	Let $ q_i(t) \in \R $ be the position of the $ i^{th} $ body on the line for $ i = 1,\dots,4 $. A \textit{simultaneous binary collision} occurs at some time $ t_c $ when two pairs of binaries, say $ (q_1,q_2) $ and $ (q_3,q_4) $, satisfy $ q_1(t_c) = q_2(t_c), q_3(t_c)=q_4(t_c) $ but $ q_1(t_c)\neq q_3(t_c) $. Throughout the paper only a spatial neighbourhood of the simultaneous binary collisions between these two \textit{distressed binaries} is considered. Denote the set of all such simultaneous binary collisions by $ \sbc $.
	
	In essence, regularisation concerns the continuation of solutions to differential systems past singular points. Solutions that approach the singularity in forward (resp. backward) time are called  ingoing (resp. outgoing) asymptotic orbits. If they can be extended through the singular point in some meaningful manner, then the singularity is deemed \textit{regularisable}. There are two primary notions of what is meant by `meaningful'. The first asks, when considered as a power series about $ t_c $ in $ t $, whether each asymptotic orbit has an analytic continuation. This is referred to as \textit{branch regularisation} or \textit{regularisation with respect to time}. It has its foundation in celestial mechanics in the work of Sundman \cite{Siegel2012} and has been considered for simultaneous binary collisions in \cite{belbrunoSimultaneousDoubleCollision1984,elbialyCollisionEjectionManifold1996,Ouyang2009,punovsevac2012regularization,saariManifoldStructureCollision1984}. 
	
	However, we are concerned with the alternate approach to regularisation whereby a singularity is regularisable if there exists an extension of the asymptotic orbits that is at least continuous with respect to initial conditions. Conley and Easton \cite{Conley1971,Easton1971} provide a precise definition of this notion, referred to as \textit{block regularisation}. They link the regularisability of a set of singularities to the behaviour of the flow in an isolating block $ N $ around the singularities. Essentially, one constructs a homeomorphism $ \pi $ by flowing ingoing points on the boundary of $ N $ to outgoing points. Note that $ \pi $ is only defined for points which are not in an asymptotic orbit. If $ \pi $ admits a unique $ C^k $ extension $ \pib $ that maps ingoing asymptotic orbits to outgoing asymptotic orbits then the set of singularities is said to be $ C^k $\emph{-regularisable} and $ \pib $ is denoted the \emph{block map}. 
	
	Many examples connecting regularity and isolating blocks are given in \cite{duignanRegularisationPlanarVector2019}. In the context of the $ n $-body problem, the ingoing and outgoing asymptotic orbits are called \textit{collision} and \textit{ejection} orbits respectively. They are denoted by $ \mathcal{E}^+ $ and $ \mathcal{E}^- $ respectively and their union is denoted by $ \mathcal{E} $.
	
	If the block map $ \pib $ is already known to be $ C^0 $, then an isolating block can be constructed from any two transverse sections $ \Sigma_0,\Sigma_3 $ of $ \mathcal{E}^+,\mathcal{E}^- $ respectively \cite{Conley1971}. It will be reproved in Theorem \ref{thm:C0Regularisable} that the set of simultaneous binary collisions $ \sbc $ is at least $ C^0 $ regularisable. Hence, the following is a simpler working definition of regularisation for the simultaneous binary collisions.
	\begin{definition}
		The set of simultaneous binary collisions $ \sbc $ is $ C^k $-regularisable if there exists two transverse sections $ \Sigma_0,\Sigma_3 $ to the collision and ejection orbits respectively and the block-map $ \pib:\Sigma_0 \to\Sigma_3 $ is $ C^k $.
	\end{definition}

	With the given definition of regularisation, Easton proved that isolated binary collisions are analytically regularisable \cite{Easton1971,Easton1972}. Yet, through the use of blow-up, it was shown by McGehee that the triple collision is not even $ C^0 $ regularisable \cite{McGehee1974}. Despite being a limiting behaviour of two isolated binary collisions, the following curious result has been conjectured.
	\begin{conjecture}[Martinez and Sim\'{o} \cite{Martinez1999} (1999)]\label{conj:C83}
		The set of simultaneous binary collisions $ \sbc $ is exactly $ C^{8/3} $-regularisable in the planar 4-body problem.
	\end{conjecture}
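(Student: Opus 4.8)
The plan is to carry the blow-up analysis developed for the collinear problem into the full planar setting, treating the two angular degrees of freedom carried by the distressed binaries as additional central directions of the collision manifold. First I would regularise each binary internally by a Levi-Civita-type change of variables, so that the motion of each pair becomes a rotating harmonic oscillator, and then introduce McGehee-type scaling coordinates: a variable $r$ measuring the combined size of the two binaries, together with angular variables recording the relative sizes of the two pairs and their two orientations. After rescaling time by the appropriate Keplerian power of $r$, the vector field extends analytically to the boundary $ r = 0 $, which is the blown-up collision set $\sbc$.

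Next I would analyse the dynamics on $\sbc$. In the collinear case this set is shown to consist of two manifolds of normally hyperbolic saddles joined by a manifold of heteroclinics; in the planar case I expect the same skeleton, now fibred over the orientation angles of the two binaries, so that the normally hyperbolic invariant manifolds $\NHIM$ acquire extra center directions corresponding to the rotation of each pair. The key structural step is to show that normal hyperbolicity survives these added directions — i.e.\ that the individual binary rotations are slow (central) rather than hyperbolic — and that the heteroclinic connection between the two copies of $\NHIM$ persists and stays transverse once the angular directions are switched on.

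I would then invoke the normal-form theory for transitions past normally hyperbolic saddles joined by heteroclinics to compute the asymptotic expansion of the block map $\pib:\Sigma_0\to\Sigma_3$ across $\sbc$. The exponents in this expansion are controlled by ratios of the hyperbolic eigenvalues at the saddles, and these ratios are fixed purely by the Keplerian scaling of the binary potential, hence are independent of the masses and of the initial condition; the leading non-smooth contribution appears at order $8/3$. To get \emph{exactly} $C^{8/3}$ one must then establish two things: that the coefficient of this $8/3$ term does not vanish, and that the extra angular directions produce no fractional term of lower order. This last point is precisely the planar analogue of the statement that a complete set of integrals local to the double collision cannot be constructed, and the whole argument reduces to showing that this obstruction is genuine rather than an artefact of the collinear reduction.

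The hard part will be the middle two steps taken together: controlling the additional center directions of the planar problem. One must rule out resonances between the binary rotation frequencies and the hyperbolic rates that could either depress the critical exponent or cancel the $8/3$ coefficient, and one must verify transversality of the now higher-dimensional stable and unstable manifolds along the heteroclinic connection uniformly in the angular variables. A successful treatment of these two points would lift the collinear theorem to the full planar statement and settle the conjecture; conversely, any truly new fractional term contributed by the rotational degrees of freedom is exactly where a different critical exponent, and hence a failure of the conjecture, could first appear.
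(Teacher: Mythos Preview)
The statement you are addressing is Conjecture~\ref{conj:C83}, which the paper explicitly leaves \emph{open}: the paper proves only the collinear case (Theorem~\ref{thm:C83Regularisable}) and states in the introduction and the concluding remarks that the planar $4$-body problem remains unresolved. There is therefore no proof in the paper to compare against; your task was in effect to attack an open problem.

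What you have written is a coherent research outline rather than a proof. You correctly identify the architecture one would want---Levi-Civita on each binary, McGehee blow-up, normal hyperbolicity of the collision manifold with the $1{:}3$ eigenvalue ratio, and the normal-form/transition-map machinery of Propositions~\ref{prop:normalformnhim} and~\ref{prop:asymStructureofD}---and this does mirror the paper's collinear strategy. But the proposal does not carry out any of the steps where the planar problem genuinely diverges from the collinear one, and you yourself flag these as ``the hard part''.

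The concrete gaps are the following. First, in the planar problem an orbit can pass close to $\sbc$ without either binary actually colliding; the paper notes this explicitly as the obstruction to extending the Martinez--Sim\'o argument, and it means the heteroclinic skeleton and the sections $\Sigma_i$ must be rebuilt from scratch rather than inherited from the collinear picture. Second, your assertion that the two angular degrees of freedom are purely central and contribute no new resonant monomials below order $8$ is precisely the unresolved analytic content of the conjecture: the collinear obstruction in Proposition~\ref{prop:normalformforSBC} and Lemma~\ref{lem:noFoliatation} arises at degree $9$ in a specific $\ker L_{X_0}^*$, but adding two more variables enlarges every $\mathcal{H}_d$ and the kernel of the cohomological operator, and one must redo that computation to check that no resonant term appears earlier. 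Third, non-vanishing of the $8/3$ coefficient in the collinear case came from an explicit evaluation of $\tilde b_c$ via the integral $\bar H^{(8)}$; in the planar case the analogue will depend on the angular initial data, and showing it is nowhere zero is a separate, unaddressed calculation. Until these three points are actually executed, the proposal remains a plausible programme but not a proof.
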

	Remarkably, this odd behaviour of orbits near collision has been confirmed by Martinez and Sim\'{o} \cite{Martinez2000} for some sub-problems.
	\begin{thm}[Martinez and Sim\'{o} \cite{Martinez2000} (2000)]\label{thm:martinezSimo}
		The set of simultaneous binary collisions $ \sbc $ is exactly $ C^{8/3} $-regularisable for the collinear, trapezoidal, bi-isosceles and tetrahedron 4-body problems.
	\end{thm}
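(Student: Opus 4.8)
The plan is to reprove the statement for the collinear problem by a blow-up of $\sbc$; the trapezoidal, bi-isosceles and tetrahedron cases each reduce, after the obvious symmetry reduction, to a system with the same local structure near $\sbc$, so the argument below applies to them verbatim. First I would introduce coordinates adapted to the two distressed binaries: write $x_1 = q_2-q_1 \ge 0$ and $x_2 = q_4-q_3 \ge 0$ for the binary sizes and $x_3$ for the (bounded, bounded-away-from-zero) separation of the two binary centres of mass, together with conjugate momenta $y_1,y_2,y_3$. The Hamiltonian then splits as two Kepler-like terms $-b_i/x_i$, singular precisely on $\sbc = \{x_1 = x_2 = 0\}$, plus a part that is smooth there. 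A McGehee-type scaling $x_i = r s_i$ with $(s_1,s_2)$ on the positive quarter of the unit circle, $y_i = r^{-1/2}w_i$, and the time reparametrisation $\mathrm{d}t = r^{3/2}\,\mathrm{d}\tau$ blows $\sbc$ up to a \emph{collision manifold} $\{r=0\}$ on which the vector field extends analytically, with orbits limiting onto $\{r=0\}$ as $\tau \to \pm\infty$.

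Next I would analyse the flow on $\{r=0\}$. To leading order in $r$ the two binaries decouple, so the collision manifold fibres over the $\sbc$-directions with fibre governed — after an additional weighted blow-up of the two corners $s=(1,0)$ and $s=(0,1)$, where one binary collapses faster than the other — by two copies of the elementary Kepler collision manifold. The output I expect to establish is exactly the picture announced in the abstract: $\{r=0\}$ contains two invariant submanifolds $\NHIM^{+}$ and $\NHIM^{-}$, carrying respectively the $\omega$-limits of the collision orbits $\CE^{+}$ and the $\alpha$-limits of the ejection orbits $\CE^{-}$, each \emph{normally hyperbolic of saddle type}, and the two are joined by a manifold of heteroclinic orbits inside $\{r=0\}$ — these heteroclinics being the regularised ``bounces'' of the individual binary collisions, which exist because an isolated binary collision is analytically regularisable. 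The eigenvalues of the normal linearisation along $\NHIM^{\pm}$ are then computed explicitly from the Kepler exponents; they are independent of the masses, and their ratios are the rational numbers that will ultimately produce the value $8/3$.

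Then I would fix transverse sections $\Sigma_0,\Sigma_3$ to $\CE^{+},\CE^{-}$ and factor the block map $\pib\colon\Sigma_0\to\Sigma_3$ as a composition of three maps: a local transition past $\NHIM^{+}$, the smooth global passage along the heteroclinic manifold, and a local transition past $\NHIM^{-}$. For the two local transitions I would invoke the recent normal-form and transition results for normally hyperbolic saddle manifolds: in suitable coordinates each transition is a Dulac-type map whose non-integer powers of the contracting coordinate are dictated precisely by the normal eigenvalue ratios found above, and the obstruction to eliminating those powers is the non-existence of a first integral local to the collision straightening the normal dynamics into an integer-exponent resonant form. Composing the three maps and tracking the lowest-order non-smooth monomial, I expect $\pib$ to be $C^{2}$ with a leading correction of order $|\cdot|^{8/3}$, hence exactly $C^{8/3}$ and no smoother; and since all the relevant exponents are mass-independent and independent of which orbit in $\CE$ one linearises along, the order of differentiability is independent of the masses and of the choice of section.

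The main obstacle I anticipate is the second step: making the ``two binaries decouple'' heuristic rigorous at the two corners of the collision manifold, where the naive McGehee blow-up degenerates and a further weighted blow-up is needed, and then verifying that the resulting invariant sets are \emph{genuinely} normally hyperbolic, with the transverse spectral gap required to apply the transition theorems and — crucially — to exclude resonant logarithmic terms that would corrupt the clean $8/3$. Once the normal hyperbolicity and the gap conditions are in place, extracting the precise asymptotic coefficients of $\pib$ is a lengthy but routine normal-form computation.
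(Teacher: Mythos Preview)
Your architectural plan---blow up $\sbc$, identify two normally hyperbolic saddle manifolds $\NHIM^\pm$ joined by heteroclinics, factor $\pib = D_2 \circ T \circ D_1$ into two Dulac-type local passages and one smooth global transition, then read off the leading non-smooth exponent---matches the paper's strategy closely. The main divergence is in the choice of coordinates: the paper does \emph{not} use a McGehee scaling $x_i = r s_i$, $y_i = r^{-1/2} w_i$, but instead first applies the Levi-Civita regularisation $Q_i = \tfrac{1}{2} z_i^2$ to each binary, then passes to the \emph{generalised Levi-Civita coordinates} $(z_1,z_2,h_1,h_2,x,y)$ with $h_i$ the intrinsic binary energies, and finally performs a polar (later directional) blow-up in $(z_1,z_2)$ only. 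This buys two things. First, the ``corner'' degeneracies you anticipate at $s=(1,0),(0,1)$ simply do not arise: those points are isolated binary collisions, already regularised by Levi-Civita, so no secondary weighted blow-up is needed. Second, the normal eigenvalues at $\NHIM^\pm$ are read off from a one-line Jacobian computation as $2^{-1/2}$ and $-3\cdot 2^{-1/2}$, giving the ratio $1/3$ with no spectral-gap or log-term subtleties to order~$8$. The step you flag as the ``main obstacle'' is therefore routine in the paper's coordinates.

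Where the real work lies---and where your sketch is thinnest---is in pinning down the $8$. The ratio $1/3$ alone only tells you that $\pib$ is asymptotic to a series in powers of $\theta^{1/3}$; to conclude exactly $C^{8/3}$ you must show that the coefficients of $\theta^{j/3}$ vanish for every non-integer $j/3 < 8/3$ and that the coefficient at $\theta^{8/3}$ does not. The paper achieves this by computing the Belitskii inner-product normal form of the \emph{pre}-blow-up vector field to degree~$9$: every resonant term below degree $9$ cancels, and the first obstruction to extending $h_1,h_2$ to local integrals is a degree-$9$ polynomial $R_h$ whose coefficient is precisely the first genuinely coupled monomial $b_c\, z_1^4 z_2^4$ in the Taylor expansion of the interaction potential between the two binaries. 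This $R_h$ survives into the smooth transition $T$ as an order-$8$ contribution in the $h_i$ components (after the desingularising time rescaling), and composing with the $\theta \mapsto \theta^{1/3}$ Dulac map produces the $\theta^{8/3}$ term with a coefficient $\tilde b_c$ that is explicitly computed and shown to be strictly positive for all masses. Your proposal correctly names the non-existence of a local first integral as the obstruction, but you should be aware that placing that obstruction at order~$8$---rather than some other order---is not a consequence of the eigenvalue ratios and requires this separate, lengthy normal-form calculation.
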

	The conjecture remains open for the collinear or planar $ n $-body with $ n >4 $, and the planar $ 4 $-body problem. 
		
	There have been several authors researching work towards this conjecture. Elbialy \cite{ElBialy1990} used the blow-up method, first introduced to celestial mechanics by McGehee in \cite{McGehee1974}, to take a very general approach to the problem. Multiple collision singularities were investigated and some asymptotic behaviour of collision and ejection orbits in the $ n $-body problem was given. Elbialy's research was followed by the work of Sim\'{o} and Lacomba \cite{Simo1992} which proved the simultaneous binary collision is $ C^0 $-regularisable in the $ n $-body problem through the use of perturbative techniques. Two key papers by Elbialy, one on the collinear problem \cite{Elbialy1993collinear} and the other on the planar problem \cite{Elbialy1993planar}, produced a set of coordinates, the \textit{generalised Levi-Civita coordinates}, which showed clearly the result is $ C^0 $ in the planar problem, and further, at least $ C^1 $ in the collinear problem. It was in 1999 that Martinez and Sim\'{o} reproved the $ C^0 $ result in the plane and provided numerical evidence to support Conjecture \ref{conj:C83} for the trapezoidal $ 4 $-body problem \cite{Martinez1999}. Then, a year later, they proved Theorem \ref{thm:martinezSimo} on the $ C^{8/3} $ regularisation in some sub-problems of the planar $ 4 $-body problem. Their proof involved a type of Picard iteration to explicitly compute, order by order, some trajectories nearby collision as a function of time. After a few iterations a power of $ 8/3 $ arose in the time variable and through this the conjecture was concluded for these cases. A crucial ingredient was treating the simultaneous binary collision as a limiting case of two isolated binary collisions. In doing so, they required the time difference $ t_d $ between the two binary collisions, with simultaneous binary collision occuring when $ t_d = 0 $. However, in the planar problem, an orbit can be near collision without undergoing an isolated binary collision. As a result, there does not seem a simple extension of their method to the planar case.
	
	In this paper Theorem \ref{thm:martinezSimo} is reproved for the collinear $ 4 $-body problem. However, in an attempt to construct a method of proof that may extend to the planar problem, the more geometric path paved by Elbialy is followed. A geometrical explanation of the generalised Levi-Civita coordinates, first used by Elbialy in \cite{ElBialy1990}, is given in Section \ref{sec:coordinates}. It is shown that these coordinates regularise independent binary collisions but produce a codimension 2 set of degenerate equilibria corresponding to simultaneous binary collisions. 
	
	In the proceeding Section \ref{sec:C0regularity}, a blow-up and desingularisation produces the collision manifold. Proposition \ref{prop:structureOfCollisionManifold} is proved, revealing the collision manifold as two, 3:1 and 1:3 resonant, normally hyperbolic manifolds of singularities that are connected by a manifold of heteroclinics. A similar result was first observed in \cite{elbialy1996flow}. The proposition gives the topological structure of the flow in a neighbourhood of the set of singularities. Ultimately, this fact leads to a proof of the $ C^0 $-regularisation in Theorem \ref{thm:C0Regularisable}.
	
	Section \ref{sec:Ckregularisation} constitutes the bulk of the paper. It provides the necessary theory required to prove the main theorem, Theorem \ref{thm:C83Regularisable}, on the $ C^{8/3} $-regularisation of the simultaneous binary collisions. We begin the section by contemplating the existence of a foliation into normal, invariant 2-planes in a tubular neighbourhood of $ \sbc $. Through a study of the homological operator associated to the normal form of the set of collision singularities, the existence of the foliation is linked to the existence of a set of formal, local integrals. With this normal form procedure, a notion of how well a normal space admits a smooth, invariant foliation is defined. In particular, for the simultaneous binary collisions, a computation of the normal form in Proposition \ref{prop:normalformforSBC} concludes that the foliation fails to exist at order $ 8 $. Remarkably, the term preventing the foliation is the first term in the potential coupling the two distressed binaries. This proves a heuristic observation given by Martinez and Sim\'{o} \cite{Martinez1999} on the crucial role the coupling term plays.
	
	We continue Section \ref{sec:Ckregularisation} by noticing that the structure of the collision manifold admits a procedure for explicitly computing the asymptotic series of the block map $ \pib $. The relevant theory to compute the asymptotic orbit is detailed in \cite{duignanNormalFormsManifolds}. This theory is summarised in several propositions. It is used to prove Theorem \ref{thm:blockMapIsQuasiRegular} which asserts that the block map is generically quasi-regular. In fact, it is seen that the block map for the simultaneous binary collisions is asymptotic to a power series in terms of $ \theta^{1/3} $, where $ \theta $ will be defined as some measure of the distance away from a collision orbit. The $ 1/3 $ will be seen to result from the resonances of the normally hyperbolic singularities in the collision manifold. Finally, a cumbersome calculation involving normal forms around the normally hyperbolic manifold of singularities and solutions to variational equations is carried out to give the asymptotic series of the block map explicitly. This in turn proves the main Theorem \ref{thm:C83Regularisable}. It will become remarkably apparent that the finite differentiability at $ 8/3 $ is due to the inability to foliate the space at order $ 8 $, that is, the inability to construct a specific set of integrals local to the set of simultaneous binary collisions $ \sbc $.
	\section{Coordinates Near Simultaneous Binary Collision}\label{sec:coordinates}
	
	\subsection{Difference Vectors}
		\begin{figure}[ht]
			\centering
			\begin{scaletikzpicturetowidth}{\textwidth}
				\begin{tikzpicture}[scale = \tikzscale]
				\node[circle,fill,inner sep=2pt] (1) at (-5,0) {};
				\node[circle,fill,inner sep=1pt] (2) at (-2,0) {};
				\node[circle,fill,inner sep=2pt] (3) at (2,0) {};
				\node[circle,fill,inner sep=3pt] (4) at (4,0) {};
				\node[cross] (5) at (-4,0) {};
				\node[cross] (6) at (3,0) {}; 
				\node[inner sep=2pt,fill=white] (7) at (-4,0.5) {};
				\node[inner sep=2pt,fill=white] (8) at (3,0.5) {};
				
				\draw[-Latex] (1) -- node[label=south:$Q_1$] {}(2);
				\draw[-Latex] (3) -- node[label=south:$Q_2$] {}(4);
				\draw[-Latex] (7) -- node[auto] {$x$}(8);
				\end{tikzpicture}
			\end{scaletikzpicturetowidth}
			\caption{The configuration variables near simultaneous binary collision}\label{fig:3BDiff}
		\end{figure}
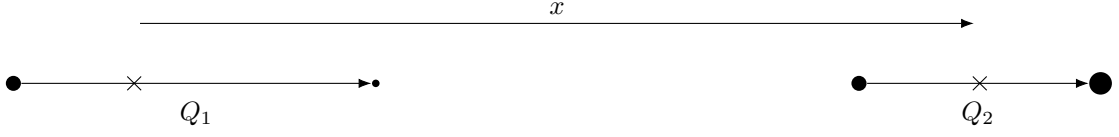
		
		Suppose there are 4 collinear bodies consisting of two binaries undergoing collision in different regions of configuration space at precisely the same time $ t_c $. Further suppose that the bodies with mass $ m_1 $ and $ m_2 $ undergo one of the binary collisions and bodies with masses $ m_3 $ and $ m_4 $ undergo the other. Let the signed distance between the bodies in each binary be given by $ Q_1,Q_2 $ respectively and let $ x $ be the signed distance between the two centre of masses of the binaries. The coordinates are depicted Figure \ref{fig:3BDiff}. If $ P_1,P_2,y $ are the conjugate momenta of $ Q_1,Q_2,x $, the dynamics of the collinear 4-body problem is given by the Hamiltonian,
		\begin{equation}
			\begin{aligned}
				H(Q,x,P,y) = \sum_{i=1}^{2}\left( \frac{1}{2 M_i} P_i^2 - k_i |Q_i|^{-1} \right) + \frac{1}{2}\mu y^2 - \hat{K}(Q_1,Q_2,x),
			\end{aligned}		
		\end{equation}
		with the standard symplectic form $ \omega = dQ_1\wedge dP_1 + dQ_2\wedge dP_2 + dx\wedge dy $.
		The smooth function $ \hat{K} $ contains the potential terms coupling the two binaries and $ M_i,k_i,\mu,d_i, c_i > 0 $ are constant functions of the masses,
		\begin{equation}
			\begin{aligned}
			\hat{K} &= \frac{d_1}{|x + c_2 Q_1 - c_4 Q_2|} + \frac{d_2}{|x + c_2 Q_1 + c_3 Q_2|} + \frac{d_3}{|x - c_1 Q_1 - c_4 Q_2|} + \frac{d_4}{|x - c_1 Q_1 + c_3 Q_2|} \\
				&~ \\
				M_1 &= \frac{m_1 m_2}{m_1 + m_2},\qquad M_2 = \frac{m_3 m_4}{m_3 + m_4},\qquad k_1 = m_1 m_2,\qquad k_2 = m_3 m_4, \\
				\mu &= \frac{m_1 + m_2 + m_3 + m_4}{(m_1 + m_2)(m_3 + m_4)},\\
				d_1 &= m_1 m_3,\quad  d_2 = m_1 m_4,\quad d_3 = m_2 m_3,\quad d_4 = m_2 m_4, \\
				c_1 &= m_2^{-1} M_1,\quad c_2 = m_1^{-1}M_1,\quad c_3 = m_4^{-1} M_2,\quad c_4 = m_3^{-1} M_2.
			\end{aligned}
		\end{equation}
		This choice of coordinates is a reduction of the system by translational symmetry via a coordinate transform that preserves the diagonal structure of the mass metric.
	
		It is more convenient to work with rescaled variables $ \tilde{Q}_i, \tilde{P}_i $ defined via the symplectic transformation
		\begin{equation}
			\tilde{Q}_i = 4 k_i M_i Q_i,\qquad \tilde{P}_i = \frac{1}{4 M_i k_i} P_i.
		\end{equation}
		Under this scaling the Hamiltonian is
		\begin{equation}
			H(\tilde{Q},x,\tilde{P},y) = \sum_{i=1}^{2} \frac{1}{2} a_i \left( \tilde{P}_i^2 - \frac{1}{2} |\tilde{Q}_i|^{-1} \right) + \frac{1}{2}\mu |y|^2 - \tilde{K}\left( \tilde{Q}, x \right),
		\end{equation}
		where $\displaystyle a_i = 16  M_i k_i^2 $ and $ \tilde{K}\left(\tilde{Q}_i,x\right) = \hat{K}\left(\frac{1}{4 k_i M_i} \tilde{Q}_i,x\right) $.
	
	\subsection{Levi-Civita Regularisation of Binaries}\label{sec:LeviCivita}
	
		In an attempt to regularise the simultaneous binary collisions, it is natural to first regularise the binary collisions of each distressed binary. This is done by passing to the Levi-Civita variables $ (\tilde{z}_i,u_i) $ through the symplectic map 
		\[ \tilde{Q}_i = \frac{1}{2}\tilde{z}_i^2,\qquad \tilde{P}_i = \tilde{z}_i^{-1} u_i. \]
		The result of this transformation is the partially regularised, translational reduced Hamiltonian,
		\begin{equation}\label{eqn:PartiallyRegularisedHam}
			H(\tilde{z},x,u,y) = \sum\limits_{i=1}^{2} \frac{1}{2} a_i \tilde{z}_i^{-2}\left(  u_i^2 - 1 \right) 
			+ \frac{1}{2}\mu y^2 - \bar{K}(\tilde{z}_1,\tilde{z}_2,x),
		\end{equation}
		with $ \bar{K}(\tilde{z}_1,\tilde{z}_2,x):= \tilde{K}\left(\frac{1}{2} \tilde{z}_1^2,\frac{1}{2}\tilde{z}_2^2,x\right) $.
		
		The Hamiltonian is said to be partially regularised for the following reason. Time can be rescaled to $ dt = z_1^2 z_2^2 d\tau $ by using the Poincar\'{e} trick of moving to extended phase space and restricting to a constant energy surface. That is, by introducing the Hamiltonian
		\begin{equation}
			\begin{aligned}
				\mathscr{H}(\tilde{z},x,u,y) &= \tilde{z}_1^2 \tilde{z}_2^2\left(H(\tilde{z},x,u,y) - h\right)\\
					&=  \frac{1}{2} a_1 \tilde{z}_2^2\left(  u_1^2 -  1\right) + \frac{1}{2} a_2 \tilde{z}_1^2\left(  u_2^2 -  1\right) 
					+ \tilde{z}_1^2 \tilde{z}_2^2\left(\frac{1}{2}\mu y^2 - \bar{K}(\tilde{z},x) - h\right),
			\end{aligned}
		\end{equation}
		and restricting to a constant energy surface in the original Hamiltonian, $ H = h $, yielding $ \mathscr{H} = 0 $. The flow on $ \mathscr{H} = 0 $ is equivalent to the flow on $ H = h $ up to time rescaling. As desired, the Hamiltonian $ \mathscr{H} $ is regular at $ \tilde{z}_1 = 0 $ or $ \tilde{z}_2 = 0 $ and so the binary collision singularities have been regularised. The set of simultaneous binary collisions $ \tilde{z}_1 = \tilde{z}_2 = 0 $, denoted by $ \sbc $, is a critical point of $ \mathscr{H} $ and the associated Hamiltonian differential equation,
		\begin{equation}\label{eqn:sBCreg}
			\begin{aligned}
				\dot{\tilde{z}}_1	&= a_1 \tilde{z}_2^2 u_1	\\
				\dot{\tilde{z}}_2	&= a_2 \tilde{z}_1^2 u_2	\\
				\dot{x}		&= \mu \tilde{z}_1^2 \tilde{z}_2^2 y
			\end{aligned}\qquad
			\begin{aligned}
				\dot{u}_1	&= \tilde{z}_1\left( 2 \tilde{z}_2^2 \left( h+\bar{K}(\tilde{z},x)-\frac{1}{2}\mu y^2 \right) - a_2 \left( u_2^2 - 1 \right) +  \tilde{z}_1 \tilde{z}_2^2\frac{\partial \bar{K}}{\partial \tilde{z}_1} \right)\\
				\dot{u}_2	&= \tilde{z}_2\left( 2 \tilde{z}_1^2 \left( h+\bar{K}(\tilde{z},x)-\frac{1}{2}\mu y^2 \right) - a_1 \left( u_1^2 - 1 \right) +  \tilde{z}_1^2 \tilde{z}_2\frac{\partial \bar{K}}{\partial \tilde{z}_2} \right)	\\
				\dot{y}		&= \tilde{z}_1^2 \tilde{z}_2^2 \frac{\partial \bar{K}}{\partial x}
			\end{aligned}
		\end{equation}
		has a manifold of singularities given by $ \sbc $. Essentially, when rescaling time to regularise the binary collisions, time was over-scaled at the set of simultaneous binary collisions, slowing down orbits as they approach the singularity and creating an equilibrium. Instead of a simultaneous binary collision occurring at some finite time $ t_c $, it now occurs as $ \tau \to \pm \infty $ for collision and ejection orbits respectively. 
		
		The following proposition gives crucial properties of the collision and ejection orbits. It has been proved in, for example, \cite{ElBialy1990,Martinez1999,Siegel2012}. We state it here in the Levi-Civita coordinates.
		\begin{proposition}\label{prop:asymptotics}
			Suppose that $ (\tilde{z}_1,\tilde{z}_2,x,u_1,u_2,y) $ is a collision (resp. ejection) orbit. Then,
			\[ u_i \to \varepsilon_i, \quad \frac{\tilde{z}_1}{\tilde{z}_2} \to \delta \left(\frac{a_1}{a_2}\right)^{\frac{1}{3}}, \quad x \to x^*, \quad y \to y^*, \]
			as $ \tau \to \infty $ (resp. $ \tau \to -\infty $). Here $ |x^*|,|y^*| < \infty $, $ \varepsilon_i = \pm 1 $ and $ \delta = \varepsilon_1 \varepsilon_2 $. Moreover, for each choice of $ \varepsilon_i $, the set $ \mathcal{E} $ of collision and ejection orbits is a 5 dimensional manifold.
		\end{proposition}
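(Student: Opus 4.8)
The statement is classical; it is proved in, e.g., \cite{ElBialy1990,Martinez1999,Siegel2012}, and the plan is to recover it by a McGehee-type blow-up of the degenerate equilibrium manifold $\sbc$ of \eqref{eqn:sBCreg}. At every point of $\sbc$ the linearisation of \eqref{eqn:sBCreg} is nilpotent, so some blow-up is forced; the gain is that $\sbc$ becomes a normally hyperbolic invariant manifold on whose stable and unstable sets both the asymptotics and the dimension count become transparent. First I would record the elementary a priori estimates. In the spatial neighbourhood of $\sbc$ under consideration, the four interaction distances appearing in $\hat K$ (equivalently in $\bar K$) are bounded away from zero, so $\bar K$ and its derivatives are bounded there. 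Undoing the time rescaling, $\int_{\tau_0}^{\infty}\tilde z_1^2\tilde z_2^2\,d\tau$ equals the finite elapsed time $t_c-t_0$; inserting this into the $\dot x$ and $\dot y$ equations of \eqref{eqn:sBCreg} shows that $y$ stays bounded and that $x,y$ converge to finite limits $x^*,y^*$. The energy relation $\mathscr H=0$, namely $\tfrac12 a_1\tilde z_2^2(u_1^2-1)+\tfrac12 a_2\tilde z_1^2(u_2^2-1)=-\tilde z_1^2\tilde z_2^2(\tfrac12\mu y^2-\bar K-h)$, then confines the remaining variables so that the orbit possesses a nonempty $\omega$-limit set in the blown-up space.

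Second, I would blow up by setting $\tilde z_1=r\hat z_1$, $\tilde z_2=r\hat z_2$ with $\hat z_1^2+\hat z_2^2=1$ and $r\ge 0$, and rescale time by $ds=r\,d\tau$. A direct computation shows that the rescaled vector field extends smoothly to the collision manifold $\{r=0\}$, that $\{r=0\}$ is invariant with $x'=y'=0$ on it, and that there the flow reads $\hat z_1'=a_1\hat z_2^4u_1-a_2\hat z_1^3\hat z_2u_2$, $u_1'=-a_2\hat z_1(u_2^2-1)$, together with the equations obtained by interchanging the indices $1\leftrightarrow 2$. For $\hat z_1\hat z_2\neq 0$ the equilibria are precisely the points with $u_1^2=u_2^2=1$ and $a_1\hat z_2^3u_1=a_2\hat z_1^3u_2$; writing $u_i=\varepsilon_i=\pm1$ and solving the last equation gives $\hat z_1/\hat z_2=\delta(a_1/a_2)^{1/3}$ with $\delta=\varepsilon_1\varepsilon_2$. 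For each $(\varepsilon_1,\varepsilon_2)$, and each of the two sign choices for $(\hat z_1,\hat z_2)$, this is a $2$-dimensional equilibrium manifold $\mathcal M_\varepsilon$ parametrised by $(x,y)$.

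The crux is to show that the $\omega$-limit set of a collision orbit reduces to a single point of one $\mathcal M_\varepsilon$. That set is a nonempty, compact, connected, invariant subset of the ``physical'' collision manifold $\{r=0,\,W=0\}$, where $W:=a_1\hat z_2^2(u_1^2-1)+a_2\hat z_1^2(u_2^2-1)$ is the $r\to 0$ limit of $\mathscr H/r^2$. On $\{W=0\}$ I would exhibit the gradient-like structure of the collision-manifold flow — for instance by checking that $(u_1^2-1)^2+(u_2^2-1)^2$ is strictly decreasing along nonequilibrium orbits in the region relevant to collision orbits, in which the linearisation at $\mathcal M_\varepsilon$ forces $\hat z_iu_i<0$ — and that a collision orbit cannot approach the coordinate faces $\hat z_1=0$ or $\hat z_2=0$ (one binary becoming infinitely smaller than the other). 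These facts rule out recurrence and heteroclinic cycles and force the $\omega$-limit onto some $\mathcal M_\varepsilon$, whence $u_i\to\varepsilon_i$ and $\tilde z_1/\tilde z_2=\hat z_1/\hat z_2\to\delta(a_1/a_2)^{1/3}$; together with the first step this yields $x\to x^*$, $y\to y^*$ with $|x^*|,|y^*|<\infty$. Ejection orbits are handled by reversing time. I expect this to be the genuine obstacle: controlling the global dynamics on the collision manifold, which is why one wants either a true Lyapunov function on $\{W=0\}$ or the monotonicity of the normalised radial rate $G:=a_1\hat z_1\hat z_2^2u_1+a_2\hat z_1^2\hat z_2u_2$, whose sign distinguishes collision from ejection equilibria.

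Finally, for the dimension statement I would linearise the blown-up flow at a point of $\mathcal M_\varepsilon$. In the four normal directions, spanned by $r$, the ratio $\hat z_1/\hat z_2$, $u_1$ and $u_2$, the Jacobian is block-triangular and a short computation gives its eigenvalues as $G$, $2G$, $-2G$ and $-3G$ (the ratio $3{:}1$ accounting for the resonance); at a collision equilibrium $G<0$, so exactly two, $G$ and $2G$, are stable. Thus $\mathcal M_\varepsilon$ is normally hyperbolic with a $2$-dimensional stable normal bundle, so its stable manifold within a fixed energy level has dimension $2+2=4$; letting the energy vary adds one dimension, making $\mathcal E^+$ — and, by time reversal, $\mathcal E^-$, hence $\mathcal E$ — a $5$-dimensional manifold for each choice of $\varepsilon_i$. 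Since blowing back down is a diffeomorphism away from $r=0$, this is a statement in the original Levi--Civita coordinates, which would establish the proposition.
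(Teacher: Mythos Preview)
Your proposal is precisely the approach the paper points to: it does not prove the proposition itself but states that ``a geometrical proof can be constructed using the methods of blow-up and desingularisation'' and refers the reader to \cite{ElBialy1990}; your McGehee-type polar blow-up in the $(\tilde z_i,u_i)$ variables is that argument, and the steps you outline --- a priori bounds on $x,y$ from the finite elapsed $t$-time, the equilibria on the collision manifold at $u_i=\varepsilon_i$ with $\hat z_1/\hat z_2=\delta(a_1/a_2)^{1/3}$, a gradient-like structure on $\{W=0\}$ to pin down the $\omega$-limit, and normal hyperbolicity with the $1{:}3$ eigenvalue ratio --- are correct. Your Lyapunov candidate $(u_1^2-1)^2+(u_2^2-1)^2$ does work on $\{W=0\}$ because that constraint forces $(u_1^2-1)(u_2^2-1)\le 0$, giving the derivative the right sign in the region $\hat z_iu_i<0$; one then only has to treat the invariant set $\{u_1^2=u_2^2=1\}$ separately, where the residual $\theta$-flow relaxes to $\mathcal M_\varepsilon$ --- exactly the sort of detail you flag as the genuine obstacle.

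One useful comparison: the paper later (Section~\ref{sec:CollisionManifold}, Proposition~\ref{prop:structureOfCollisionManifold}) carries out the same blow-up in the \emph{generalised} Levi--Civita coordinates $(z_i,h_i)$ rather than $(\tilde z_i,u_i)$. There the equilibrium manifold $\NHIM^\pm$ is $4$-dimensional with only a $2$-dimensional normal bundle (eigenvalues in ratio $1{:}(-3)$), so the $5$-dimensional count comes out directly as $4+1$ with no ``let the energy vary'' step. In your picture the equilibrium manifold $\mathcal M_\varepsilon$ is only $2$-dimensional and the normal bundle is $4$-dimensional; the two extra hyperbolic directions are the $u_i$, with eigenvalues $\pm 2|G|$, and these correspond via $u_i=\sqrt{1+h_iz_i^2}$ to the intrinsic energies $h_i$, which become centre directions in the generalised coordinates. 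Both presentations are valid; the generalised coordinates simply absorb two of your normal directions into the equilibrium manifold, which is why the paper prefers them for the subsequent $C^{8/3}$ analysis.
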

		A geometrical proof can be constructed using the methods of blow-up and desingularisation. The curious reader is referred to \cite{ElBialy1990} for details of the proof. The different values of $ \varepsilon_i $ result from the Levi-Civita transformation being a double cover of the original phase space.

	\subsection{Generalised Levi-Civita Coordinates}
		Blow-up and desingularisation methods in the Levi-Civita coordinates $ (\tilde{z}_i,u_i) $ have been implemented in \cite{Martinez1999,ElBialy1990} to produce useful asymptotic results. However, it can be argued they are not ideal coordinates to see that the set of simultaneous binary collisions is block regularisable. A set of singularities is only $ C^0 $-regularisable if each ingoing asymptotic orbit can be mapped to a unique outgoing asymptotic orbit. For the set of simultaneous binary collisions $ \sbc $, this requires that each collision orbit map to a unique ejection orbit under $ \pib $. However, from Proposition \ref{prop:asymptotics}, $ \mathcal{E}^+ $ and $ \mathcal{E}^- $ are both 5 dimensional. Consequently, if there is no obvious constraint on how orbits on $ \mathcal{E}^+ $ must map to orbits on $ \mathcal{E}^- $ then any block map $ \pi $ can not possibly be extended \textit{uniquely} to a map $ \pib $ on the whole block. 
		
		A natural constraint on how $ \mathcal{E}^+ $ maps to $ \mathcal{E}^- $ can be made by demanding that collision orbits map to ejection orbits with the same asymptotic properties. In fact, this technique is prominent in blow-up methods for algebraic geometry problems; see for instance \cite{eisenbud2006geometry}. Proposition \ref{prop:asymptotics} shows that all collision orbits approach $ \sbc $ with the same value of $ u_i^* $ and with the same tangent $ \tilde{z}_2^*/\tilde{z}_1^* $. Therefore, if it is desired to distinguish between distinct collision orbits, we must use the asymptotic value of the second derivative of the collision orbits as they approach $ \sbc $. That is, we must use the asymptotic of the ``curvature'' of each collision orbit in the $ (u_i,\tilde{z}_i) $ plane. This can be achieve by introducing a new coordinate $ \kappa_i $ through $ u_i = a_i + \kappa_i \tilde{z}_i^2 $.
		
		However, the intrinsic energy of each distressed binary is given by 
		\begin{equation}\label{eq:intrinsicEnergies}
			\tilde{h}_i = \frac{1}{2} a_i \tilde{z}_i^{-2}(u_i^2 - 1).
		\end{equation}
		Re-arranging this for $ u_i $ and expanding at the collision point $ u_i = 1,\ \tilde{z}_i = 0 $ in $ z_i $ gives \[ u_i = 1 + \frac{1}{ a_i} \tilde{h}_i \tilde{z}_i^2 + \dots . \]
		Consequently, the more physical intrinsic energies $ \tilde{h}_i $ can be used instead of the curvature $ \kappa_i $ to distinguish between distinct collision orbits. But introducing the intrinsic energies as new coordinates is precisely what is done by Elbialy in \cite{ElBialy1990}! We take a slight vairation to Elbialy by using a rescaling of 
		\begin{equation}
			z_i =  a_i^{-1/3}\tilde{z}_i,\qquad h = 2 a_i^{-1/3}\tilde{h}_i 
		\end{equation}
		to produce a version of the \textit{generalised Levi-Civita coordinates} $ (z_i,h_i,x,y) $.		
		
		The Hamiltonian in the generalised Levi-Civita coordinates and the symplectic form are, 
		\begin{equation}
			\begin{aligned}
				H &= \frac{1}{2} a_1^{1/3} h_1 + \frac{1}{2} a_2^{1/3} h_2 + \frac{1}{2}\mu y^2 - K(z_1,z_2,x), \\
				\omega &= \frac{1}{2}a_1^{1/3}\frac{ z_1^2}{ u_1} dz_1 \wedge dh_1 + \frac{1}{2}a_2^{1/3}\frac{z_2^2}{u_2} dz_2 \wedge dh_2 + dx\wedge dy.
			\end{aligned}
		\end{equation}
		where $ K(z_1,z_2,x) := \bar{K}(a_1^{1/3}z_1,a_2^{1/3}z_2,x) $.
		Of course, \eqref{eq:intrinsicEnergies} is only invertible when $ h_i z_i^2 + 1 > 0 $ and a choice of branch of $ u_i $ is made. Each of the choices will cover at least one of the simultaneous binary collision equilibria and a sufficiently small neighbourhood of $ z_1 = z_2 = 0 $ can be chosen. So, without loss of generality, make the choice $ u_i = + \sqrt{1 + h_i z_i^2} $.
		
		Using Hamilton's equations and rescaling by $ dt = z_1^2 z_2^2 d\tau $ as before, the collinear 4-body problem is given in the generalised Levi-Civita coordinates by the system
		\begin{equation}\label{eqn:generalisedLeviCivita}
			\begin{aligned}[c]
				z_1^\prime	&= z_2^2 \sqrt{1 +  z_1^2 h_1}	\\
				z_2^\prime	&= z_1^2 \sqrt{1 +  z_2^2 h_2}	\\
				x^\prime	&= \mu z_1^2 z_2^2 y			\\
			\end{aligned}
			\qquad
			\begin{aligned}[c]
				h_1^\prime	&= 2 a_1^{-1/3} z_2^2 \sqrt{1 + z_1^2 h_1} \frac{\partial K}{\partial z_1}	\\
				h_2^\prime	&= 2 a_2^{-1/3} z_1^2 \sqrt{1 + z_2^2 h_2} \frac{\partial K}{\partial z_2}	\\
				y^\prime	&= z_1^2 z_2^2\frac{\partial K}{\partial x}.
			\end{aligned}	
		\end{equation}
		Denote by $ X $ the vector field associated to System \eqref{eqn:generalisedLeviCivita}.
		
		In the generalised Levi-Civita coordinates, some properties of the flow near simultaneous binary collision become clear. Firstly, each of the binary collisions (e.g. $ z_1 = 0, z_2 \neq 0 $) are regular points of the flow, hence regularisable. There is a co-dimension 2 manifold of equilibria $ (0,0,h_1^*,h_2^*,x^*,y^*) \cong \R^4 $ corresponding to the simultaneous binary collisions $ \sbc $ in the chosen chart. Moreover, the equilibria in this manifold are degenerate in that they have vanishing Jacobian. Each fixed point in $ \sbc $ corresponds to the asymptotic values of a collision (resp. ejection) orbit as time approaches $ \infty $, (resp. $ -\infty $). That is, a fixed point in $ \sbc $ gives the value of the intrinsic energies of each distressed binary ($ h_1^*,h_2^* $), the distance between the two collisions $ x^* $, and the momentum at which the two binaries are moving from each other $ y^* $ at collision. 
		
		The fact that these equilibria are degenerate obfuscates even the topological properties of the flow in a neighbourhood of the collision set. Though, once the blow-up and desingularisation process is done in these coordinates, determining if the simultaneous binary collision is regularisable, and quantifying the degree to which it is, is a less formidable task.
		
	\section{$ C^0 $-regularity of block map}\label{sec:C0regularity}
	
	The primary aim of this section is to reprove Theorem \ref{thm:C0Regularisable} on the $ C^0 $-regularisation of simultaneous binary collisions. A didactic example, referred to as the uncoupled problem, is used to motivate the techniques and computations in this section and in Section \ref{sec:Ckregularisation}. A blow-up and desingularisation of the set of collisions $ \sbc $ in the generalised Levi-Civita coordinates and a study of the flow on the resultant collision manifold ultimately leads to the desired proof of $ C^0 $-regularisation in Theorem \ref{thm:C0Regularisable}. 
	
	\subsection{The Uncoupled Problem}
		If one uncouples the interaction between the two distressed binaries the result is the direct product of two Kepler systems. This so called \textit{uncoupled problem} is integrable. Consequently, many of the properties of the flow, such as $ C^0 $-regularisation, will follow with minimal effort. Using some of the integrals of the uncoupled problem, a lower dimensional problem can be produced and visualised. The terms in the generalised Levi-Civita system \eqref{eqn:generalisedLeviCivita} influenced by the coupling terms $ K $ are of high order in $ z_1, z_2 $. So, in the study of a tubular neighbourhood of $ \sbc $, where $ z_1 = z_2 = 0 $, removing the coupling terms should still capture the essential dynamics of the full problem.
		
		Explicitly, the uncoupled Kepler problem is given by the system,
		\begin{equation}\label{eqn:uncoupledKepler}
			\begin{aligned}
				z_1^\prime 	&= z_2^2 \sqrt{1 + z_1^2 h_1}\\
				z_2^\prime 	&= z_1^2 \sqrt{1 + z_2^2 h_2}\\
				x^\prime	&= \mu z_1^2 z_2^2 y.
			\end{aligned}
		\end{equation}
		with the other variables integrals, $ h_1^\prime=h_2^\prime=y^\prime=0 $. By making a choice of $ h_1,h_2,y $ the system can be considered as a vector field on $ \R^3 $. Similar to the coupled problem, simultaneous binary collision at $ (z_1,z_2)= (0,0) $ corresponds to a co-dimension 2 set of degenerate fixed points. Each fixed point is parameterised by $ x^* $. A qualitative plot of the dynamics is given in Figure \ref{fig:uncoupled}.
		
		\begin{figure}[ht]
			\centering
	\def\svgwidth{0.7\textwidth}
\begingroup%
  \makeatletter%
  \providecommand\color[2][]{%
    \errmessage{(Inkscape) Color is used for the text in Inkscape, but the package 'color.sty' is not loaded}%
    \renewcommand\color[2][]{}%
  }%
  \providecommand\transparent[1]{%
    \errmessage{(Inkscape) Transparency is used (non-zero) for the text in Inkscape, but the package 'transparent.sty' is not loaded}%
    \renewcommand\transparent[1]{}%
  }%
  \providecommand\rotatebox[2]{#2}%
  \newcommand*\fsize{\dimexpr\f@size pt\relax}%
  \newcommand*\lineheight[1]{\fontsize{\fsize}{#1\fsize}\selectfont}%
  \ifx\svgwidth\undefined%
    \setlength{\unitlength}{908.21708139bp}%
    \ifx\svgscale\undefined%
      \relax%
    \else%
      \setlength{\unitlength}{\unitlength * \real{\svgscale}}%
    \fi%
  \else%
    \setlength{\unitlength}{\svgwidth}%
  \fi%
  \global\let\svgwidth\undefined%
  \global\let\svgscale\undefined%
  \makeatother%
  \begin{picture}(1,0.66514929)%
    \lineheight{1}%
    \setlength\tabcolsep{0pt}%
    \put(0,0){\includegraphics[width=\unitlength,page=1]{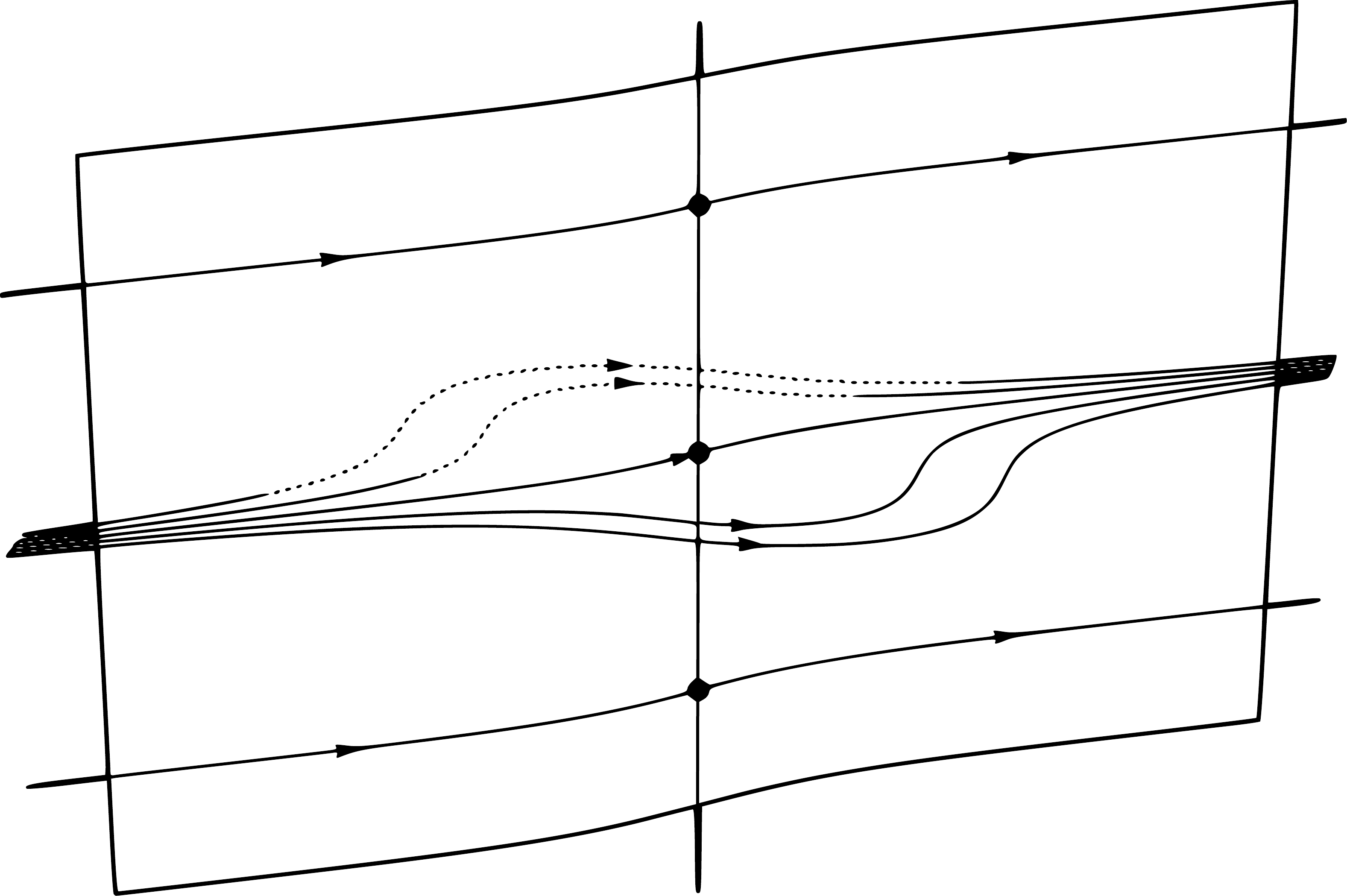}}%
    \put(0.53244235,0.03734695){\color[rgb]{0,0,0}\makebox(0,0)[lt]{\lineheight{2}\smash{\begin{tabular}[t]{l}$ \sbc $\end{tabular}}}}%
    \put(0.09359196,0.50804938){\color[rgb]{0,0,0}\makebox(0,0)[lt]{\lineheight{2}\smash{\begin{tabular}[t]{l}$ \CE^+ $\end{tabular}}}}%
    \put(0.8285484,0.60596492){\color[rgb]{0,0,0}\makebox(0,0)[lt]{\lineheight{2}\smash{\begin{tabular}[t]{l}$ \CE^- $\end{tabular}}}}%
    \put(0.52890324,0.48445527){\color[rgb]{0,0,0}\makebox(0,0)[lt]{\lineheight{2}\smash{\begin{tabular}[t]{l}$ x^* $\end{tabular}}}}%
  \end{picture}%
\endgroup%

			\caption{A qualitative plot of the uncoupled problem. Using blow-up methods in Section \ref{sec:CollisionManifold} this qualitative picture is validated. The $ C^0 $-regularity of a well chosen block map $ \pib $ is clearly apparent.}
			\label{fig:uncoupled}
		\end{figure}
	
		From Figure \ref{fig:uncoupled}, the $ C^0 $-regularity of the block map $ \pib $ is clear. There is a manifold of collision $ \CE^+ $ (resp. ejection $ \CE^- $) orbits asymptotic to $ \sbc $ in forward (resp. backward) time. Orbits on either side $ \CE^+ $ pass around the set of singularities $ \sbc $ and meet one another on the other side near $ \CE^- $. In the next section this qualitative picture is validated for both the uncoupled and coupled problems.
		
	\subsection{Study of the Collision Manifold}\label{sec:CollisionManifold}
		In order to get the asymptotic and topological structure of the flow in a neighbourhood of $ \sbc $, blow-up and desingularisation can be performed. The use of blow-up in celestial mechanics was introduced by McGehee \cite{McGehee1974} in his study of the triple collision. Later it was implemented in investigations of the simultaneous binary collision by Elbialy \cite{ElBialy1990}, and Mart\'{i}nez and Sim\'{o} \cite{Martinez1999}. This section follows similarly to the work of Elbialy \cite{Elbialy1993planar}. We are less ambitious in our treatment of the problem in comparison to the general framework presented by Elbialy where $ l $ pairs of binaries are undergoing collision simultaneously inside the $ n $-body problem. By only treating two binaries in the $ 4 $-body problem, some simplifications and more concise statements of the flow near $ \sbc $ can be made.
		
		In the generalised Levi-Civita coordinates, the blow-up is easily achieved by introducing polar coordinates in the position variables
		\begin{equation}
			z_1 = r \cos \theta,\quad z_2 = r \sin \theta
		\end{equation}
		and the desingularisation by rescaling time $ d \bar{\tau} = r d \tau $. The result is the blown-up system,
		\begin{equation}\label{eqn:blownUpSystem}
			\begin{aligned}
				r^\prime 		&= r \sin\theta \cos\theta \left(\cos\theta \sqrt{1+ h_2 r^2 \sin^2\theta}+\sin\theta \sqrt{1 + h_1 r^2 \cos^2\theta}\right)\\
				\theta^\prime 	&= \left(\cos^3\theta \sqrt{1 + h_2 r^2 \sin^2\theta}-\sin^3 \theta \sqrt{1 +  h_1 r^2 \cos^2\theta }\right)\\
				x^\prime		&= \mu r^3 y \sin ^2\theta\cos ^2\theta \\
				h_1^\prime		&= 2 a_1^{-1/3} r \cos^2\theta \sqrt{1+ h_1 r^2 \cos^2\theta} \frac{\partial K}{\partial z_1}(r \cos\theta, r \sin\theta, x) \\
				h_2^\prime		&= 2 a_2^{-1/3}r \sin^2\theta \sqrt{1+ h_2 r^2 \sin^2\theta} \frac{\partial K}{\partial z_2}(r \cos\theta, r \sin\theta, x) \\
				y^\prime		&= r^3 \sin^2\theta \cos^2\theta \frac{\partial K}{\partial x}(r \cos\theta, r \sin\theta, x).
			\end{aligned}
		\end{equation}
		Denote the vector field associated to the system by $ X_\theta $. 
		
		In these coordinates, the set of simultaneous binary collisions corresponds to $ r = 0 $. The introduction of polar coordinates and consequent time rescaling  by $ r $ has replaced the set of equilibria occurring at $ \sbc = (0,0)\times\R^4 $ with the cylinder $ \mathcal{C} = {0}\times S^1 \times \R^4 $. The cylinder $ \mathcal{C} $ is referred to as the \textit{collision manifold}.
		
		By studying the fictitious dynamics on the collision manifold, qualitative information on nearby orbits can be gathered. The flow on the $ \mathcal{C} $ is given by setting $ r = 0 $ in \eqref{eqn:blownUpSystem},
		\begin{equation}\label{eqn:collisionFlow}
			\begin{aligned}
				r^\prime 		&= 0\\
				\theta^\prime 	&= \cos^3\theta - \sin^3 \theta \\
				x^\prime		&= h_1^\prime = h_2^\prime = y^\prime = 0.
			\end{aligned}
		\end{equation}
		Remarkably, as noted in the work of Elbialy \cite{ElBialy1990}, not only is the collision manifold invariant under the flow, but $ x,y,h_1,h_2 $ remain constant. In other words, the collision manifold $ \mathcal{C} $ is foliated by invariant $ S^1 $. The flow on these invariant circles is independent of the choice of constant $ (x^*,y^*,h_1^*,h_2^*) $. Furthermore, each $ S^1 $ has equilibria when $ \tan\theta = 1 $. This agrees with the results given in Proposition \ref{prop:asymptotics}. Combining these facts with a study the blown-up system allows us to prove the following proposition.
		
		\begin{proposition}\label{prop:structureOfCollisionManifold}
			The collision manifold $ \mathcal{C} $ is a heteroclinic connection between two normally hyperbolic invariant manifolds of fixed points. Moreover, the following properties hold:
			\begin{enumerate}[(i)]
				\item The normally hyperbolic manifolds are given by the $ (r,\theta) = (0, \pi/4) $ and $ (r,\theta) = (0,-3\pi/4) $. Denote them by $ \NHIM^-,\NHIM^+ $ respectively.
				\item The normal bundle of each manifold is 2-dimensional in the $ (r,\theta) $ directions.
				\item The heteroclinic connection is foliated by invariant $ S^1 $.
				\item Restricted to the normal bundle, the $ \NHIM^+,\NHIM^- $ are resonant hyperbolic saddles with the ratio of stable to unstable eigenvalue given by $ 1:3 $ and $ 3:1 $ respectively.
			\end{enumerate}
		\end{proposition}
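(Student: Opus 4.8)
The plan is to extract every assertion from the blown-up system \eqref{eqn:blownUpSystem} together with its restriction \eqref{eqn:collisionFlow} to $\mathcal{C}=\{r=0\}$. First I would record the foliation. On $\mathcal{C}$ one has $x'=h_1'=h_2'=y'=0$ and $\theta'=\cos^3\theta-\sin^3\theta$ depends on $\theta$ alone, so the circles $\gamma_c:=\{r=0\}\times S^1_\theta\times\{c\}$ with $c=(x^*,h_1^*,h_2^*,y^*)\in\R^4$ are invariant and partition $\mathcal{C}\cong S^1\times\R^4$; this is the foliation in (iii). On each $\gamma_c$ the phase-line field $g(\theta)=\cos^3\theta-\sin^3\theta$ vanishes precisely when $\tan\theta=1$, i.e. at $\theta=\pi/4$ and $\theta=-3\pi/4=\pi/4+\pi$ (two zeros, using that after blow-up $r\ge 0$ and $\theta$ runs over the full circle, which is legitimate because the Levi-Civita variables $z_i$ take both signs). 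A sign check $g(0)=1>0$, $g(\pi/2)=-1<0$, $g(3\pi/2)=1>0$ shows that $\theta=\pi/4$ is a sink and $\theta=-3\pi/4$ a source of the circle flow, so each $\gamma_c$ is the union of these two equilibria with the two complementary open arcs, each a heteroclinic orbit from $\theta=-3\pi/4$ to $\theta=\pi/4$.

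Next I would assemble the invariant manifolds. Put $\NHIM^-:=\{r=0,\ \theta=\pi/4\}$ and $\NHIM^+:=\{r=0,\ \theta=-3\pi/4\}$, each a copy of $\R^4$ in the $(x,h_1,h_2,y)$ variables. I claim each of their points is an equilibrium of the full field $X_\theta$, not just of the reduced flow on $\mathcal{C}$: $\theta'$ vanishes by the choice of $\theta_0$; $r'$, $x'$, $y'$ vanish because each is divisible by $r$ (indeed by $r^3$ for $x',y'$); and $h_1',h_2'$ vanish because $K$ is an even function of each $z_i$ separately (it is built from $\bar K$, hence from $\tilde K(\tfrac12\tilde z_1^2,\tfrac12\tilde z_2^2,x)$), so $\partial K/\partial z_i=O(z_i)=O(r)$ and $h_i'=O(r^2)$. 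Combining this with the phase-line analysis of the previous paragraph, $\mathcal{C}\setminus(\NHIM^+\cup\NHIM^-)$ is exactly a union of heteroclinic orbits running from $\NHIM^+$ to $\NHIM^-$, with each $\gamma_c$ carrying a pair of them; this is the heteroclinic-connection statement of the proposition and completes (iii).

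It remains to establish normal hyperbolicity together with (i), (ii), (iv) by computing $DX_\theta$ at a base point $p_0\in\NHIM^\pm$. The divisibility observations above force every row of $DX_\theta(p_0)$ to vanish except those of $r'$ and $\theta'$, and in those two rows only the diagonal entries survive at $r=0$:
\[
\left.\frac{\partial r'}{\partial r}\right|_{p_0}=\sin\theta_0\cos\theta_0(\cos\theta_0+\sin\theta_0)=:s_0,\qquad \left.\frac{\partial\theta'}{\partial\theta}\right|_{p_0}=\left.\frac{d}{d\theta}\bigl(\cos^3\theta-\sin^3\theta\bigr)\right|_{\theta_0}=-3s_0,
\]
while $\partial\theta'/\partial r=\partial r'/\partial\theta=0$ at $r=0$. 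Hence $DX_\theta(p_0)=\operatorname{diag}(s_0,-3s_0,0,0,0,0)$ in the ordering $(r,\theta,x,h_1,h_2,y)$: the four zero eigenvalues span $T_{p_0}\NHIM^\pm$ and the two nonzero ones span the normal plane $\operatorname{span}(\partial_r,\partial_\theta)$, which is (ii). Evaluating, $s_0=1/\sqrt2$ at $\theta_0=\pi/4$ and $s_0=-1/\sqrt2$ at $\theta_0=-3\pi/4$; in both cases the normal eigenvalues are nonzero constants of opposite sign, uniform (indeed constant) over the base and with the tangential spectrum identically $\{0\}$, so each $\NHIM^\pm$ is normally hyperbolic (of every order, as needed later), giving (i). The normal eigenvalue pairs are $(1/\sqrt2,-3/\sqrt2)$ on $\NHIM^-$, a stable-to-unstable ratio $3:1$, and $(-1/\sqrt2,3/\sqrt2)$ on $\NHIM^+$, ratio $1:3$; since the ratio is the integer $3$ (or $1/3$) there are resonance relations among the normal eigenvalues (e.g. $\lambda_u=4\lambda_u+\lambda_s$), so the saddles are resonant, which is (iv).

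The whole computation is elementary; the one step I would flag as the only place needing care — the "hard part", such as it is — is verifying that the coupling potential contributes nothing to the linearisation, which rests on the parity of $K$ in $z_1$ and $z_2$ and on every remaining nonlinear term carrying enough powers of $r$. Once that is in hand the rest is a one-line Jacobian evaluation and a phase-line sign check.
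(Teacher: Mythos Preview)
Your proposal is correct and follows essentially the same approach as the paper: identify the equilibria and foliation from the reduced flow \eqref{eqn:collisionFlow}, compute the Jacobian of $X_\theta$ at points of $\NHIM^\pm$ to find $\operatorname{diag}(s_0,-3s_0,0,0,0,0)$ with $s_0=\pm 2^{-1/2}$, and read off normal hyperbolicity and the $1{:}3$ resonance. Your explicit parity argument for $K$ (to force the $h_i'$ rows to vanish at the linear level) is a detail the paper leaves implicit in its blanket $+O(r)$ remainder, so if anything your write-up is slightly more careful on the one point you flagged.
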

		\begin{proof}
			The Jacobian of $ X_\theta $ (system \eqref{eqn:blownUpSystem}) is given by,
			\begin{equation}
				DX_\theta = 
				\left(\begin{array}{@{}cc@{}}
				\begin{matrix}
					\sin \theta \cos\theta\left( \cos\theta + \sin\theta \right) & 0 \\
					0 & -3 \sin \theta \cos\theta\left( \cos\theta + \sin\theta \right)
				\end{matrix}
				& \bigzero \\
				\bigzero & \bigzero
				\end{array}\right) + O(r).
			\end{equation}
			Evaluating on the manifolds of fixed points $ (r,\theta) = (0,\pi/4) $ and $ (r,\theta) = (0,-3\pi/4) $ yields,
			\begin{equation}
				DX_\theta|_{\NHIM^-}  = 
				\left(\begin{array}{@{}cc@{}}
				\begin{matrix}
					2^{-1/2} & 0 \\
					0 & -3\cdot 2^{-1/2} 
				\end{matrix}
				& \bigzero \\
				\bigzero & \bigzero
				\end{array}\right),\qquad DX_\theta|_{\NHIM^+}  = 
				\left(\begin{array}{@{}cc@{}}
				\begin{matrix}
					-2^{-1/2} & 0 \\
					0 & 3\cdot 2^{-1/2} 
				\end{matrix}
				& \bigzero \\
				\bigzero & \bigzero
				\end{array}\right).
			\end{equation}
			Hence, both $ \NHIM^+,\NHIM^- $ are normally hyperbolic with central directions $ (x,h_1,h_2,y) $ and each is a hyperbolic saddle with 1:3 and 3:1 resonances respectively. The unstable manifold of each fixed point in $ \NHIM^+ $ begins in the $ \theta $-direction. Due to the invariant foliation of the collision manifold into $ S^1 $, the unstable manifold must coincide with the stable manifold of a fixed point in $ \NHIM^- $ with the same values of $ (x^*,h_1^*,h_2^*,y^*) $. 
		\end{proof}
		
		Each invariant $ S^1 $ is blown-down to a single point on the manifold of simultaneous binary collisions $ \sbc $. The stable manifold of $ \NHIM^+ $ leaves the collision manifold in the $ r $ direction. Thus, the portion of the stable manifold with $ r>0 $ corresponds to $ \CE^+ $. Similarly, the portion of the unstable manifold with $ r > 0 $ of $ \NHIM^- $ is the set of ejection orbits $ \CE^- $. Because of the heteroclinic connection between the two normally hyperbolic manifolds, when the system is blown-down, $ \CE^+,\CE^- $ are glued together with each collision orbit connected to the unique ejection orbit with the same asymptotic values of $ (x^*,h_1^*,h_2^*,y^*) $. The following nice corollary can be concluded.
		\begin{corollary}
			Each collision orbit is connected to a unique ejection orbit.
		\end{corollary}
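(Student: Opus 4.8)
The plan is to deduce the corollary directly from Proposition \ref{prop:structureOfCollisionManifold} together with the blow-down map. First I would recall the setup: the blow-down is the map $(r,\theta,x,h_1,h_2,y)\mapsto(z_1,z_2,x,h_1,h_2,y)=(r\cos\theta,r\sin\theta,x,h_1,h_2,y)$, which is a diffeomorphism for $r>0$ and collapses the collision manifold $\mathcal{C}=\{r=0\}\times S^1\times\R^4$ fibrewise onto $\sbc$. By Proposition \ref{prop:asymptotics} (equivalently the analysis following Proposition \ref{prop:structureOfCollisionManifold}), a collision orbit in the generalised Levi-Civita coordinates, once blown up, is precisely an orbit on the positive-$r$ side of the stable manifold $W^s(\NHIM^+)$, and an ejection orbit is an orbit on the positive-$r$ side of the unstable manifold $W^u(\NHIM^-)$.

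Next I would pin down the two ``endpoints'' of an arbitrary collision orbit. Given a collision orbit $\gamma$, by Proposition \ref{prop:asymptotics} (now read in the blown-up coordinates) the $\omega$-limit of $\gamma$ under $X_\theta$ is a single fixed point $p^+\in\NHIM^+$, with well-defined asymptotic data $(x^*,h_1^*,h_2^*,y^*)$; this is exactly the statement that $\gamma$ lies in $W^s(p^+)$. Since the normal bundle of $\NHIM^+$ is $2$-dimensional in the $(r,\theta)$ directions and the saddle there has a $1$-dimensional unstable direction (the $\theta$-direction, tangent to $\mathcal{C}$) and a $1$-dimensional stable direction, the local stable manifold $W^s_{\mathrm{loc}}(p^+)$ meeting $\{r>0\}$ is a single orbit (up to the time parametrisation). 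The heteroclinic statement (iii)–(iv) of Proposition \ref{prop:structureOfCollisionManifold} then says the unstable manifold of $p^+$ is the invariant $S^1$ through $p^+$, which limits as $\bar\tau\to+\infty$ onto the unique fixed point $p^-\in\NHIM^-$ carrying the \emph{same} central data $(x^*,h_1^*,h_2^*,y^*)$ — here I use that $x,h_1,h_2,y$ are first integrals of the flow on $\mathcal{C}$, as recorded in \eqref{eqn:collisionFlow}. Running the same argument for $\NHIM^-$, whose saddle has a $1$-dimensional unstable direction transverse to $\mathcal{C}$, the local unstable manifold $W^u_{\mathrm{loc}}(p^-)$ meeting $\{r>0\}$ is again a single orbit: this is the ejection orbit $\gamma^-$ attached to $p^-$.

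Finally I would close the loop by blowing down. The collision orbit $\gamma$ and the ejection orbit $\gamma^-$ both have $r\to0$ with the common limit point in $\sbc$ determined by $(x^*,h_1^*,h_2^*,y^*)$, so under the blow-down they glue to a single punctured orbit of $X$ through that point of $\sbc$; thus to $\gamma$ we have associated a well-defined ejection orbit. Uniqueness is exactly the one-dimensionality of $W^u_{\mathrm{loc}}(p^-)\cap\{r>0\}$ noted above, together with uniqueness of the fixed point $p^-\in\NHIM^-$ with prescribed central data. I expect the only genuinely delicate point to be the passage near the corners $p^\pm$ — i.e. making precise that an orbit asymptotic in backward/forward time to a resonant saddle on a normally hyperbolic manifold, and lying off $\mathcal{C}$, is uniquely determined by its limiting fixed point; this is a standard consequence of the (un)stable manifold theorem for normally hyperbolic invariant manifolds applied to $\NHIM^\pm$, using that the non-central spectrum at each fixed point is $1$-dimensional on each side, but it is worth stating carefully rather than leaving to the reader.
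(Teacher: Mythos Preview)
Your proposal is correct and follows essentially the same route as the paper: the paper's argument is the short paragraph immediately preceding the corollary, which identifies $\CE^\pm$ with the $r>0$ portions of $W^s(\NHIM^+)$ and $W^u(\NHIM^-)$, invokes the heteroclinic connection from Proposition~\ref{prop:structureOfCollisionManifold} to match fixed points $p^+\in\NHIM^+$ and $p^-\in\NHIM^-$ sharing the same central data $(x^*,h_1^*,h_2^*,y^*)$, and then blows down. Your write-up is simply a more careful unpacking of that paragraph, in particular making explicit the one-dimensionality of the (un)stable fibres at each $p^\pm$ that underlies uniqueness; the paper leaves this implicit in the eigenvalue computation of Proposition~\ref{prop:structureOfCollisionManifold}.
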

		
		Both the proposition and corollary are visually represented by a diagram of the uncoupled problem in Figure \ref{fig:uncoupledblowup}. In particular, the normal hyperbolicity of the two manifolds at $ \theta = \pi/4, -3\pi/4 $, the foliation of $ \mathcal{C} $ into invariant $ S^1 $, and that each collision orbit is uniquely connected to an ejection orbit via a heteroclinic. Note that the flow on $ \mathcal{C} $ coincides for the uncoupled and coupled problem.
		
		\begin{figure}[ht]
			\centering
	\def\svgwidth{0.7\textwidth}
\begingroup%
  \makeatletter%
  \providecommand\color[2][]{%
    \errmessage{(Inkscape) Color is used for the text in Inkscape, but the package 'color.sty' is not loaded}%
    \renewcommand\color[2][]{}%
  }%
  \providecommand\transparent[1]{%
    \errmessage{(Inkscape) Transparency is used (non-zero) for the text in Inkscape, but the package 'transparent.sty' is not loaded}%
    \renewcommand\transparent[1]{}%
  }%
  \providecommand\rotatebox[2]{#2}%
  \newcommand*\fsize{\dimexpr\f@size pt\relax}%
  \newcommand*\lineheight[1]{\fontsize{\fsize}{#1\fsize}\selectfont}%
  \ifx\svgwidth\undefined%
    \setlength{\unitlength}{884.00598241bp}%
    \ifx\svgscale\undefined%
      \relax%
    \else%
      \setlength{\unitlength}{\unitlength * \real{\svgscale}}%
    \fi%
  \else%
    \setlength{\unitlength}{\svgwidth}%
  \fi%
  \global\let\svgwidth\undefined%
  \global\let\svgscale\undefined%
  \makeatother%
  \begin{picture}(1,0.90723423)%
    \lineheight{1}%
    \setlength\tabcolsep{0pt}%
    \put(0,0){\includegraphics[width=\unitlength,page=1]{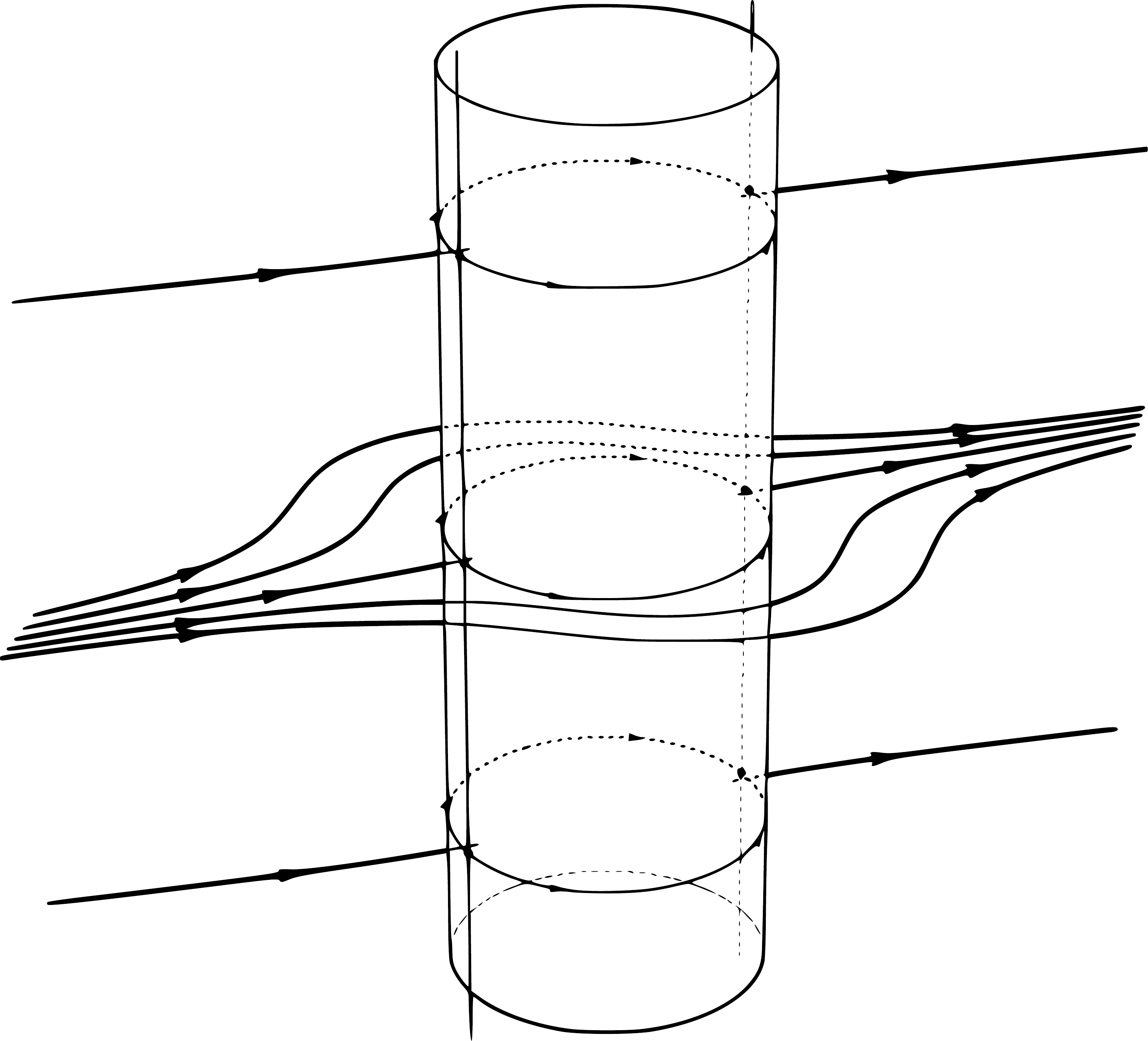}}%
    \put(0.40334123,0.84459061){\color[rgb]{0,0,0}\makebox(0,0)[lt]{\lineheight{2}\smash{\begin{tabular}[t]{l}$ \NHIM^+ $\end{tabular}}}}%
    \put(0.66150043,0.89125319){\color[rgb]{0,0,0}\makebox(0,0)[lt]{\lineheight{2}\smash{\begin{tabular}[t]{l}$ \NHIM^- $\end{tabular}}}}%
  \end{picture}%
\endgroup%

			\caption{Blow-up of the uncoupled problem. The collision manifold $ \mathcal{C} $ is represented by the cylinder and some trajectories on and nearby are given.}
			\label{fig:uncoupledblowup}
		\end{figure}

		Finally, we are in a position to prove the first key theorem, already known in \cite{Simo1992,Elbialy1993collinear,Martinez1999}.
		\begin{thm}\label{thm:C0Regularisable}
			The set of simultaneous binary collisions is at least $ C^0 $-regularisable in the collinear 4-body problem.
		\end{thm}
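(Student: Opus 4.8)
The plan is to pass to the blown-up coordinates of System~\eqref{eqn:blownUpSystem} and to assemble the block map $\pib$ as a composition of a few elementary transition maps, exploiting the structure exposed by Proposition~\ref{prop:structureOfCollisionManifold} and depicted in Figure~\ref{fig:uncoupledblowup}. Fix the sheet $\varepsilon_1=\varepsilon_2=+1$; the others are identical. I would choose a section $\Sigma_0$ transverse to the flow and to $\CE^+$ at a point with $r$ bounded away from $0$ and with $\Sigma_0$ meeting no asymptotic orbit other than those in $\CE^+$, and similarly a section $\Sigma_3$ transverse to $\CE^-$. Since the blow-down is a diffeomorphism on $\{r>0\}$ and $\Sigma_0,\Sigma_3$ sit there, continuity of the transition map in the blown-up picture is equivalent to continuity of $\pib$ in the generalised Levi-Civita coordinates, so it suffices to show: (a) flowing $\Sigma_0\setminus\CE^+$ forward to $\Sigma_3$ gives a well-defined continuous map $\pi$; and (b) $\pi$ extends continuously to $\CE^+$, sending each collision orbit to the ejection orbit assigned to it by the corollary to Proposition~\ref{prop:structureOfCollisionManifold} (same asymptotic data $(x^*,h_1^*,h_2^*,y^*)$). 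Uniqueness of the extension is then automatic, since by Proposition~\ref{prop:asymptotics} the trace $\CE^+\cap\Sigma_0$ is a positive-codimension submanifold of $\Sigma_0$, hence nowhere dense, so $\Sigma_0\setminus\CE^+$ is dense in $\Sigma_0$.

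For step (a) and the construction in (b) I would introduce small flow-transverse cross-sections near each normally hyperbolic manifold: an ``ingoing'' section meeting $W^s(\NHIM^\pm)$ and an ``outgoing'' section meeting $W^u(\NHIM^\pm)$. By Proposition~\ref{prop:structureOfCollisionManifold}, the $r>0$ part of $W^s(\NHIM^+)$ is $\CE^+$, the $r>0$ part of $W^u(\NHIM^-)$ is $\CE^-$, and $W^u(\NHIM^+)$ runs into $W^s(\NHIM^-)$ along the invariant circles of \eqref{eqn:collisionFlow}. The full transition $\Sigma_0\to\Sigma_3$ then factors as
\[
\Sigma_0 \xrightarrow{\ A\ } S^{\mathrm{in}}_+ \xrightarrow{\ B\ } S^{\mathrm{out}}_+ \xrightarrow{\ C\ } S^{\mathrm{in}}_- \xrightarrow{\ D\ } S^{\mathrm{out}}_- \xrightarrow{\ E\ } \Sigma_3,
\]
where the three maps $A$ (from $\Sigma_0$ in towards $\NHIM^+$), $C$ (along a heteroclinic circle from a neighbourhood of $\NHIM^+$ to one of $\NHIM^-$), and $E$ (from $\NHIM^-$ out to $\Sigma_3$) are \emph{bounded-time} flow maps, hence smooth by the flow-box theorem, and carry $\CE^\pm$ diffeomorphically. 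Only the two local passages $B,D$ past the saddles remain.

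For $B$: near $\NHIM^+$ the coordinates $(x,h_1,h_2,y)$ are centre directions, frozen on the collision manifold, while $r$ contracts and $\theta+\tfrac{3\pi}{4}$ expands, with eigenvalues $\mp 2^{-1/2},\pm 3\cdot 2^{-1/2}$ (parts (i), (ii), (iv) of Proposition~\ref{prop:structureOfCollisionManifold}). Thus $B$ is exactly a transition map past a normally hyperbolic saddle with one stable and one unstable normal direction, and I would invoke the standard Shilnikov / $\lambda$-lemma estimates for such a passage — of which the needed $C^0$ statement is the crudest consequence, and which are in any case summarised from \cite{duignanNormalFormsManifolds} later in the paper — to conclude that $B$ is defined off $W^s(\NHIM^+)$, its transit time tends to $+\infty$ as one approaches $W^s(\NHIM^+)$, its image converges to the trace $W^u(\NHIM^+)\cap S^{\mathrm{out}}_+$, and the centre coordinates of the image converge to their incoming values (the drift of $(x,h_1,h_2,y)$ along the passage is $O(r)$, arising only from the coupling terms in \eqref{eqn:blownUpSystem}, and is swept out as $r\to 0$). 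Hence $B$ extends continuously across $W^s(\NHIM^+)$, collapsing it onto $W^u(\NHIM^+)\cap S^{\mathrm{out}}_+$ while preserving $(x^*,h_1^*,h_2^*,y^*)$; the same argument applies to $D$ near $\NHIM^-$. Composing, $\pi=E\circ D\circ C\circ B\circ A$ is continuous on $\Sigma_0\setminus\CE^+$ and extends continuously to all of $\Sigma_0$, the extension realising on $\CE^+\cap\Sigma_0$ precisely the bijection onto $\CE^-\cap\Sigma_3$ of the corollary. Blowing down, $\pib$ is a continuous block map, so $\sbc$ is at least $C^0$-regularisable. As a sanity check, in the uncoupled problem \eqref{eqn:uncoupledKepler} every step above is explicit and reproduces Figure~\ref{fig:uncoupledblowup}; the coupled problem only adds the $O(r)$ corrections, which do not change the limits.

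I expect the analysis of the passages $B$ and $D$ to be the only real obstacle: one must control the slow drift of the centre coordinates over an unboundedly long transit near each saddle and show that the image genuinely \emph{converges} to the correct trace of the unstable manifold rather than merely accumulating on it. Everything else — well-definedness and continuity of $\pi$ away from $\CE^+$, the bounded-time legs $A,C,E$, and uniqueness of the extension — is routine.
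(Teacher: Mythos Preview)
Your proposal is correct and rests on the same underlying idea as the paper's proof: pass to the blow-up, invoke Proposition~\ref{prop:structureOfCollisionManifold}, and read off continuity of the transition from the saddle--heteroclinic--saddle structure of the collision manifold. The paper's own argument is considerably terser than yours---it simply observes that $\CE^+\cap\Sigma_0$ splits $\Sigma_0$ into two halves, that each half flows around $\mathcal{C}$ and lands near $\CE^-\cap\Sigma_3$, and that the corollary supplies the gluing on $\CE^+$ itself. Your explicit factorisation $E\circ D\circ C\circ B\circ A$, with bounded-time legs and two local saddle passages, is not in the paper's $C^0$ proof but is exactly the decomposition $\pib^\pm = D_2^\pm\circ T^\pm\circ D_1^\pm$ that the paper introduces later in Section~\ref{sec:GeometricSketch} for the $C^{8/3}$ analysis; in effect you have imported that machinery one section early, which buys you a more honest control of the unbounded-time passages $B,D$ at the cost of citing the transition estimates from \cite{duignanNormalFormsManifolds}. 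One small omission: as written, your chain $A\to\cdots\to E$ follows a single heteroclinic arc, whereas the two components of $\Sigma_0\setminus\CE^+$ travel along \emph{different} arcs of the invariant $S^1$ (the paper's ``two halves'', later $\mathcal{V}^+$ and $\mathcal{V}^-$); you should run your argument once for each side and then check that the two one-sided extensions agree on $\CE^+\cap\Sigma_0$, which they do because both collapse to the same asymptotic data $(x^*,h_1^*,h_2^*,y^*)$.
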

		\begin{proof}
			The blow-up is a diffeomorphism on $ \R^6\setminus \sbc $ and so the inverse, the blow-down, exists. The blow-down preserves the topological structure of the flow on $ (\R^+\times S^1\times\R^4) \setminus\mathcal{C} $. The $ C^0 $-regularity should now be clear. A section $ \Sigma_0 $ transverse to the manifold of collision orbits $ \CE^+ $ is split into three sets depending on whether a point is in $ \CE^+ \cap \Sigma_0 $ or which side of $ \CE^+ $ it is on. Points on the two halves of $ \Sigma_0 $ can be flowed around the collision manifold $ \mathcal{C} $ where they eventually meet again at a transverse section of the ejection orbits $ \CE^- $, say $ \Sigma_3 $. One can then glue each collision orbit in $ \CE^+\cap\Sigma_0 $ to its unique ejection orbit in $ \CE^-\cap\Sigma_3 $ to produce a $ C^0 $ block map $ \pib $.
		\end{proof}
	
	\section{$ C^{8/3} $-regularity of the Block Map}\label{sec:Ckregularisation}
	In this section Theorem \ref{thm:C83Regularisable} on the $ C^{8/3} $-regularisation of simultaneous binary collisions is proved. Firstly, a heuristic argument motivates the normal form computation to degree $ 9 $ of $ X $ in a neighbourhood of an arbitrary simultaneous binary collision on $ \sbc $. Approximate integrals are computed through a normal form procedure. An obstacle to increasing their degree is found to occur at order $ 8 $ in $ (z_1,z_2) $ in the intrinsic energy components $ h_i $ of $ X $. The non-smoothness of the block map is then established in Theorem \ref{thm:blockMapIsQuasiRegular}, where a deeper investigation of the flow near the collision manifold reveals the quasi-regularity of the block map. We give a geometric sketch of how one computes the exact regularity of the block map $ \pib $. Finally, the sketch is implemented to achieve a direct asymptotic expansion of the block map and confirm the $ C^{8/3} $-regularity. The loss of differentiability at $ 8/3 $ is linked to the failure to compute approximate integrals at order $ 8 $.
	
	\subsection{Nonlinear Normal Form Theory}
		
		The journey to proving the finite differentiability begins with a heuristic. Suppose that $ \sbc $ was indeed smoothly regularisable. If this is to mean that the singularities share properties akin to regular points, then the existence of a ``generalised flow-box'' theorem may be expected. A theorem of this nature would imply the existence of a transformation that flattens the vector field in a neighbourhood of any point in $ \sbc $. In particular, a transformation could be found that flattens the vector field in the $ (x,h_1,h_2,y) $ variables, in turn reducing the computation to a 2-dimensional problem in the $ (z_1,z_2) $ variables. If this foliation exists, not only would the number of terms in the series expansions required to compute the block map be dramatically reduced, but the theory developed in previous work on regularisation for planar vector fields \cite{duignanRegularisationPlanarVector2019} could be utilised. In other words, we want to know if there exists a foliation of a tubular neighbourhood of $ \sbc $ into invariant 2-planes normal to $ \sbc $. Of course, there could be some obstruction to this foliation and this could have an implication for the regularity of the block map. 
		
		The heuristic is supported by a study of the uncoupled problem \eqref{eqn:uncoupledKepler}. As the Kepler problem is analytically regularisable and the uncoupled problem is the direct product of two Kepler systems, the uncoupled problem must too be analytically regularisable. Moreover, the uncoupled problem admits a set of smooth integrals. These integrals give a foliation of a tubular neighbourhood of $ \sbc $ into analytic, invariant 2-planes. Hence, for the uncoupled problem, we have establishing a link between the existence of the foliation and the regularity of $ \sbc $.
		
		We turn to normal form theory to investigate the existence of a foliation in the collinear 4-body problem. Normal form theory has a long history resulting in several different normal form `styles'. For an overview see \cite{murdock2006normal}. The vector field $ X $, given in \eqref{eqn:generalisedLeviCivita}, has vanishing Jacobian on $ \sbc $, rendering the common semi-simple style useless. For this reason, the style of both Belitskii and Elphick et al \cite{belitskii1979invariant,belitskii2002c,elphickSimpleGlobalCharacterization1987}, referred to as the \textit{inner product normal form}, will be used. All styles begin the same; assume a vector field $ X $ on $ \R^n $ has a fixed point at $ 0 $ and decompose $ X $ according to some filtration. Usually this is done by taking the Taylor series of $ X $ at $ 0 $ and decomposing it into homogeneous components,
		\[ X = X_0 +X_1+\dots, \]
		with $ X_0 $ the leading order homogeneous component of degree $ s $ and $ X_{d} \in \mathcal{H}_{d+s-1} $ the space of degree $ s+d $ homogeneous vector fields. 
		
		If one applies a near identity, formal transformation of the form \[ \hat{\phi}^{-1} = I + U_d + \dots, \] where $ U_d $ is homogeneous of degree $ d+1 $, then the transformed vector field $ \hat{\phi}^*X $ at order $ d $ is given by the equation
		\begin{equation}\label{eqn:homeqn}
			(\hat{\phi}^*X)_d = X_d + [X_0, U_d],
		\end{equation}
		whilst the terms of lower order remain unchanged. Here $ [\cdot,\cdot] $ denotes the usual Lie bracket between vector fields. Because the lower terms remain unchanged, an iterative procedure on the order $ d $ can be constructed to produce a transformation putting $ X_0 $ into the `simplest' form. 
		
		Of course a choice must be made about what is meant by `simplest' form for $ \phi^*X $. Letting $ L_{X_0} := [X_0,\cdot] $, it can be seen that $ L_{X_0} $ is a linear operator acting on $ \mathcal{H}_{d} $. Hence, \eqref{eqn:homeqn} is a linear equation denoted the \textit{cohomological equation} and $ L_{X_0} $ the \textit{cohomological operator}. Denote by $ L_d:= L_{X_0}:\mathcal{H}_{d}\to\mathcal{H}_{s+d-1} $ when it is clear what $ X_0 $ is and there is a need to differentiate between which order $ L_{X_0} $ is acting. Note that in the typical case the Jacobian at the equilibrium is non-vanishing and hence the degree of of $ X_0 $ is $ s=1 $. In such a case $ L_d $ maps $ \mathcal{H}_d $ to itself. The leading order terms in the vector field \ref{eqn:generalisedLeviCivita} near $ \sbc $ are quadratic, thus we are interested in the case $ s = 2 $. 
		
		If the simplest form for $ \hat{\phi}^*X $ is deemed the one with fewest possible terms, the question then becomes, what terms of $ X_d $ can be removed by $ L_{X_0} $? Looking at the cohomological equation, terms in $ \Ima L_{X_0} \subset \mathcal{H}_{d+s-1} $ can be removed by a choice of $ U_d $. However, any terms in the complement of $ \Ima L_{X_0} $ can not be killed by the formal transformation $ \hat{\phi} $. Such terms are called \textit{resonant}.
		
		There are many choices for a space complement to $ \Ima L_{X_0} $, however, Belitskii highlights the natural choice given by $ \mathcal{H}_{d+s} =\Ima L_{X_0} \oplus \ker L_{X_0}^* $, where the adjoint is defined with respect to some choice of inner product on $ \mathcal{H}_{d+s} $. We follow Belitskii \cite{belitskii2002c} by taking the Fischer inner product on $ \mathcal{H}_d $. The following theorem combines the works of Belitskii \cite{belitskii2002c}, and Stolovitch and Lombardi \cite{lombardi2010normal}.
		\begin{thm}[\cite{belitskii2002c,lombardi2010normal}]\label{thm:FormalNF}
			There exists a formal transformation $ \hat{\phi}^{-1} = I + \sum U_d $ with $ U_d \in \Ima L_d^* $ that formally conjugates $ X = X_0+\sum X_d $ to the normal form,
			\begin{equation}
				\hat{\phi}^*X = X_0 + \sum\limits_{d\geq 1} N_d,
			\end{equation}
			with $ N_d \in \ker L_d^*  $
		\end{thm}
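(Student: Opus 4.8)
The plan is to run the standard iterative normal form procedure order-by-order, and at each order use the Fischer-inner-product splitting of $\mathcal{H}_{d+s}$ to choose the new term $U_d$. First I would recall the key algebraic fact underlying the inner product normal form: for the Fischer inner product on homogeneous vector fields, the adjoint of the cohomological operator $L_{X_0}=[X_0,\cdot]$ is computed explicitly, and one has the orthogonal decomposition
\begin{equation}
\mathcal{H}_{d+s-1} = \Ima L_d \oplus \ker L_d^*,
\end{equation}
where $L_d:\mathcal{H}_d\to\mathcal{H}_{d+s-1}$. This is exactly the content of Belitskii's construction and is where the choice of inner product enters; I would state it as a lemma (citing \cite{belitskii2002c}) rather than reprove it.

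The inductive step is then as follows. Suppose that after finitely many transformations the vector field has the form $X_0 + N_1 + \dots + N_{d-1} + \tilde X_d + (\text{higher order})$, with $N_j\in\ker L_j^*$ for $j<d$. Using the decomposition above, write $\tilde X_d = P + Q$ with $P\in\Ima L_d$ and $Q\in\ker L_d^*$, and pick $U_d\in\Ima L_d^*$ (equivalently, $U_d$ in the orthogonal complement of $\ker L_d$ inside $\mathcal{H}_d$) solving $L_d U_d = -P$; such a $U_d$ exists and is unique in $\Ima L_d^*$ because $L_d$ restricted to $(\ker L_d)^\perp=\Ima L_d^*$ is a bijection onto $\Ima L_d$. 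Applying the near-identity transformation $\hat\phi_d^{-1} = I + U_d$ and invoking equation \eqref{eqn:homeqn} — which says the order-$d$ term transforms by $\tilde X_d \mapsto \tilde X_d + L_d U_d = Q \in \ker L_d^*$, while all terms of order $<d$ are untouched — completes the step. Setting $N_d := Q$, we have advanced the normal form by one order while modifying only terms of order $\ge d$.

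To conclude, I would note that the composition of all these transformations, $\hat\phi^{-1} = \cdots\circ\hat\phi_2^{-1}\circ\hat\phi_1^{-1}$, is a well-defined formal transformation because each $\hat\phi_d^{-1}$ is tangent to the identity to order $d$, so the infinite composition converges in the formal (Krull) topology on formal vector fields; its inverse is $\hat\phi = I + \sum U_d + (\text{corrections of higher order})$, and collecting terms gives $\hat\phi^{-1} = I + \sum_d U_d$ with $U_d\in\Ima L_d^*$ as claimed. Then $\hat\phi^*X = X_0 + \sum_{d\ge1} N_d$ with each $N_d\in\ker L_d^*$, which is the assertion of the theorem. The one genuinely delicate point — and the part I would be most careful about — is the bookkeeping of which homogeneous degrees get altered when $s=2$ rather than $s=1$: here $L_d$ raises degree by $s-1=1$, so an order-$d$ correction $U_d\in\mathcal{H}_d$ (degree $d+1$) kills a term in $\mathcal{H}_{d+s-1}$ (degree $d+2$), and one must verify that the transformation $\hat\phi_d^{-1}=I+U_d$ indeed leaves all strictly lower-degree homogeneous components of $X$ unchanged; this is precisely the statement that makes the induction on $d$ close, and it is exactly what \eqref{eqn:homeqn} encodes. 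Beyond that, everything is a formal manipulation and the result follows.
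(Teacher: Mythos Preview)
The paper does not supply its own proof of this theorem: it is stated with attribution to \cite{belitskii2002c,lombardi2010normal} and used as a black box. Your argument is the standard iterative one underlying those references and is essentially correct.

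One point deserves tightening. You build $\hat\phi^{-1}$ as the infinite composition $\cdots\circ\hat\phi_2^{-1}\circ\hat\phi_1^{-1}$ with $\hat\phi_d^{-1}=I+U_d$ and $U_d\in\Ima L_d^*$, and then assert that ``collecting terms gives $\hat\phi^{-1}=I+\sum_d U_d$ with $U_d\in\Ima L_d^*$''. But the degree-$d$ homogeneous part of the composed map is $U_d$ plus correction terms built from lower-order $U_j$'s, and there is no reason these corrections lie in $\Ima L_d^*$. The cleaner route, and the one matching the statement as written, is to posit a single transformation $\hat\phi^{-1}=I+\sum_d U_d$ from the outset and solve for the $U_d$ order by order: at stage $d$ the homological equation reads $(\hat\phi^*X)_d = \tilde X_d + L_d U_d$, where $\tilde X_d$ is $X_d$ plus terms determined by the already-fixed $U_1,\dots,U_{d-1}$; decomposing $\tilde X_d$ via $\mathcal{H}_{d+s-1}=\Ima L_d\oplus\ker L_d^*$ and choosing $U_d\in\Ima L_d^*$ exactly as you describe then gives both $N_d\in\ker L_d^*$ and $U_d\in\Ima L_d^*$ simultaneously, with no need to compose and re-expand. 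This is a bookkeeping fix rather than a genuine gap in the idea.
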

		Theorem \ref{thm:FormalNF} completely characterises the formal normal form for arbitrary vector fields. Moreover, it explicitly gives a way of computing both the formal transformation and the formal normal form of a given vector field $ X $. 
		
		In practice, as the computation of the normalising transformation $ \phi $ is done order by order, one only knows the normal form up to some truncated degree. Throughout the remaining sections it will become apparent that, for the determination of the $ C^{8/3} $ regularity, the vector field can be truncated at degree $ 9 $. 
		
		Let us now refocus on the problem at hand. The leading order term at any point in the singular manifold of simultaneous binary collisions is given by
		\( X_0 = (z_2^2,z_1^2,0,0,0,0) \). Letting $ w = (w_1,\dots,w_6) \in \mathcal{H}_{d+1} $ and denoting by $ \tilde{X}_0 = z_2^2\partial_{z_1}+z_1^2\partial_{z_2} $ the leading order vector field as a derivation, the adjoint of the cohomological operator is given by,
		\begin{equation}\label{eqn:cohomop}
			L^{*}_d w = \left( \tilde{X}^*_0 w_1 - 2\partial_{z_1} w_2,\ \tilde{X}^*_0 w_2 - 2 \partial_{z_2} w_1,\ \tilde{X}^*_0 w_3,\ \tilde{X}^*_0 w_4,\ \tilde{X}^*_0 w_5,\ \tilde{X}^*_0 w_6 \right), 
		\end{equation} 
		where $ \tilde{X}^*_0 = z_2 \partial_{z_1}^2 + z_1 \partial_{z_2}^2 $ is the adjoint of $ \tilde{X}_0 $. Note that $ L^*_d:\mathcal{H}_d\to\mathcal{H}_{d-1} $.
		What is immediate from the form of $ L^*_d $ is the decoupling of the $ z_1,z_2 $ components and each of the $ x,h_1,h_2,y $ components from one another. This is a consequence of the fact that $ X_0 $ decouples into the $ z_1,z_2 $ system and a trivial vector field in the other variables. The decoupling inevitably leads to a proof of Lemma \ref{lem:noFoliatation} in the next section and an answer to the question on the existence of an invariant foliation.

	\subsection{Computation of the Formal Normal Form}
		The normal form near an arbitrary simultaneous binary collision to degree $ 9 $ will now be computed. For this calculation, the Taylor series of the vector field $ X $, which is given by \eqref{eqn:generalisedLeviCivita}, around the fixed point $ (0,0,x^*,h_{1}^*,h_{2}^*,y^*) $ to degree $ 9 $ is required. Therefore, the coupled terms from the potential $ K(z_1,z_2,x) $ must be expanded to degree $ 8 $. 
		
		$ K $ is given by the sum of terms of the form,
		\[ \frac{d_i}{|x+ C_l z_1^2 + C_m z_2^2|} = \frac{d_i}{|x|} \frac{1}{|1+C_l z_1^2/x + C_m z_2^2 /x|}. \]
		Taking the sum of the four expressions of this form in $ K $ and expanding in $ (z_1,z_2) $ to degree $ 8 $ produces a series of the form $ b_0/|x| (1+ P(z_1^2/x,z_2^2/x)) $ with $ P $ some degree $ 4 $ polynomial and $ b_0 $ a function of the masses. Remarkably, the coefficients of monomials $ z_1^i z_2^j $ with degree less than $ 8 $ vanish, provided both $ i > 0, j > 0 $ (the so called `coupled' monomials). Moreover, the coefficients of $ \frac{z_1^2}{x}, \frac{z_2^2}{x} $ vanish. Thus, the desired expansion takes the form,
		\begin{equation}\label{eqn:potential}
			K(z_1,z_2,x) = \frac{1}{|x|}\left( b_{0} + K_1\left( \frac{z_1^2}{x} \right) + K_2\left( \frac{z_2^2}{x} \right) + b_{c} \frac{z_1^4 z_2^4}{x^4} \right) + \dots, \quad K_i(Q) = \sum_{j=2}^{4} b_{ij} Q^j,
		\end{equation}
		where $ b_{ij}, b_c, b_0 $ are functions of the masses given by,
		\begin{equation}
			\begin{aligned}
				b_0 	&= d_1 + d_2 + d_3 + d_4, \\
				b_{12}	&= C_1^2 (d_3 + d_4) + C_2^2 (d_1 + d_2), \qquad b_{22}	= C_4^2 (d_1 + d_3) + C_3^2 (d_2 + d_4), \\
				b_{13}	&= C_1^3 (d_3 + d_4) - C_2^3 (d_1 + d_2), \qquad b_{23}	= C_4^3 (d_1 + d_3) - C_3^3 (d_2 + d_4), \\
				b_{14}	&= C_1^4 (d_3 + d_4) + C_2^4 (d_1 + d_2), \qquad b_{24}	= C_4^4 (d_1 + d_3) + C_3^4 (d_2 + d_4), \\
				b_c		&= 6( C_1^2 (C_4^2 d_3 + C_3^2 d_4) + C_2^2 (C_4^2 d_1 + C_3^2 d_2) ), \\
				C_1		&= a_1^{1/3} \frac{1}{8 k_1 M_1} c_1, \quad C_2 =  a_1^{1/3} \frac{1}{8 k_1 M_1} c_2, \quad C_3 =  a_2^{1/3} \frac{1}{8 k_2 M_2} c_3, \quad C_4 =  a_2^{1/3} \frac{1}{8 k_2 M_2} c_4.
			\end{aligned}
		\end{equation}
		
		The first coupled monomial $ b_c z_1^4 z_2^4 $ will be seen to play a crucial role in the arrival of non-vanishing resonant terms and ultimately the finite differentiability of the block map. This proves a heuristic observation given by Martinez and Sim\'{o} \cite{Martinez1999} on the crucial role of the coupling term.
		
		The following result on the normal form near an arbitrary simultaneous binary collision can now be given. Due to the scaling symmetry of the Hamiltonian $ H $, it can be assumed that $ x $ has the asymptotic value $ x^* = 1 $.
		\begin{proposition}\label{prop:normalformforSBC}
			The normal form $ X^9 $ in a neighbourhood of the simultaneous binary collision with asymptotic values $ (x,h_1,h_2,y) = (1,h_{1}^*,h_{2}^*,y^*) $ is given to degree 9 by
			\begin{equation}\label{eqn:preBlowUpNormalForm}
			\begin{aligned}[c]
				z_1^\prime	&=  z_2^2 + (h_1 + h_1^*)^2 R_{6,1}(z_1,z_2) + (h_2 + h_2^*)^2 R_{6,2}(z_1,z_2)	\\
				z_2^\prime	&=  z_1^2 + (h_2 + h_2^*)^2 R_{6,1}(z_2,z_1) + (h_1 + h_1^*)^2 R_{6,2}(z_2,z_1)  	\\
				x^\prime	&= 0 	
			\end{aligned}
			\qquad
			\begin{aligned}[c]
				h_1^\prime	&= b_c a_1^{-1/3} R_h(z_1,z_2) 	\\
				h_2^\prime	&= b_c a_2^{-1/3} R_h(z_2,z_1) 	\\
				y^\prime	&= 0
			\end{aligned}	
			\end{equation}
			where $ R_{6,i}, R_h $  are homogeneous polynomials of degree 6 and 9 respectively. Each is given by			
			\begin{equation}
				\begin{aligned}
					R_{6,1}(z_1,z_2)	&= \frac{8}{7195}\left(-3 z_1 z_2^2 \left(20 z_1^3-13 z_2^3\right)\right), \\
					R_{6,2}(z_1,z_2) 	&= \frac{8}{7195}\left(11 z_1^6+10 z_1^3 z_2^3-10 z_2^6\right), \\
					R_{h}(z_1,z_2)		&= \frac{4}{19} (z_1-z_2) \left(z_1^2+z_1 z_2+z_2^2 \right) \left(z_1^6-11 z_1^3 z_2^3+z_2^6 \right),
				\end{aligned}
			\end{equation}		
		\end{proposition}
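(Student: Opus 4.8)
The plan is to run the recursive inner-product (Belitskii) normal-form algorithm of Theorem~\ref{thm:FormalNF} directly on the Taylor expansion of $X$ (system~\eqref{eqn:generalisedLeviCivita}) at the fixed point $(0,0,1,h_1^*,h_2^*,y^*)$, truncated at homogeneous degree $9$ in the coordinates $(z_1,z_2,x-1,h_1-h_1^*,h_2-h_2^*,y-y^*)$, and to read off which terms survive in $\ker L_d^*$. The leading part is $X_0=(z_2^2,z_1^2,0,0,0,0)$, of degree $s=2$; the higher components come from expanding $\sqrt{1+z_i^2 h_i}$ after the shift $h_i\mapsto h_i+h_i^*$ and from the partials $\partial_{z_1}K,\partial_{z_2}K,\partial_x K$, for which the expansion~\eqref{eqn:potential} of $K$ to degree $8$ is exactly what is needed. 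The data that ultimately matter are: the terms $-\tfrac18 z_i^4 z_{3-i}^2(h_i+h_i^*)^2$ that $\sqrt{1+z_i^2 h_i}$ contributes to the $z_i$-component at degrees $6,7,8$; the first \emph{coupled} monomial $b_c z_1^4 z_2^4/x^4$ of $K$, which enters $h_1'$ only at degree $9$, through $2a_1^{-1/3}z_2^2\cdot 4b_c z_1^3 z_2^4=8a_1^{-1/3}b_c\,z_1^3 z_2^6$ (and symmetrically in $h_2'$); and the fact that the $x'$- and $y'$-terms are all multiples of $z_1^2 z_2^2$.

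At order $d$ one has $(\hat\phi^*X)_d=X_d+[X_0,U_d]+(\text{terms built from }U_1,\dots,U_{d-1})$; the normal-form part is the Fischer-orthogonal projection of this onto $\ker L_d^*$, and $U_d\in\Ima L_d^*$ is chosen to annihilate the rest. The calculation is made tractable by the decoupling of $L_d^*$ in~\eqref{eqn:cohomop}: the scalar operator $\tilde{X}_0^*=z_2\partial_{z_1}^2+z_1\partial_{z_2}^2$ acts on each of the $x,h_1,h_2,y$-components separately, treating $x,h_1,h_2,y$ as inert parameters, while the $z_1,z_2$-components are coupled only to each other through $\left(\begin{smallmatrix}\tilde{X}_0^* & -2\partial_{z_1}\\ -2\partial_{z_2} & \tilde{X}_0^*\end{smallmatrix}\right)$. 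Since $\tilde{X}_0^*(z_1^a z_2^b)\neq 0$ whenever $\max(a,b)\geq2$, all of the $x'$- and $y'$-terms (raw and feedback) are non-resonant and are removed, giving $x'=y'=0$. For $h_i'$, one checks that the self-interaction terms of degree $\leq 8$ are Fischer-orthogonal to $\ker\tilde{X}_0^*$ and disappear; at degree $9$ one verifies directly that $\tilde{X}_0^*\bigl[(z_1^3-z_2^3)(z_1^6-11z_1^3z_2^3+z_2^6)\bigr]=0$, and, since $z_1^3z_2^6$ sits in the same sector of $\mathcal{H}_9$ (graded by exponent modulo $3$) as this kernel vector, projecting $8a_i^{-1/3}b_c\,z_i^3z_{3-i}^6$ onto it yields precisely $b_c a_i^{-1/3}R_h$ --- the projection coefficient $\tfrac1{38}$ (a ratio of Fischer norms) producing the $8/38=4/19$. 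For $z_i'$, the surviving resonances are at degrees $6,7,8$: solving $\tilde{X}_0^* w_1=2\partial_{z_1}w_2$, $\tilde{X}_0^* w_2=2\partial_{z_2}w_1$ in degree $6$ shows the coupled kernel is spanned by the pairs $\bigl(R_{6,1}(z_1,z_2),R_{6,2}(z_2,z_1)\bigr)$ and $\bigl(R_{6,2}(z_1,z_2),R_{6,1}(z_2,z_1)\bigr)$, and projecting the raw terms $-\tfrac18 z_i^4 z_{3-i}^2(h_i+h_i^*)^2$ onto multiples (over polynomials in the inert variables) of these two pairs produces exactly the $(h_i+h_i^*)^2R_{6,1}$ and $(h_j+h_j^*)^2R_{6,2}$ contributions of the statement; one checks that no further resonant terms appear up to degree $9$.

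The conceptual content is carried entirely by the decoupled structure of~\eqref{eqn:cohomop} together with the two kernel computations above. The main obstacle is the honest bookkeeping needed to push the recursion through degree $9$: one must track the lower-order contributions $[X_j,U_k]$ with $j+k=d$, which are what produce the ``cross'' terms such as $(h_2+h_2^*)^2R_{6,2}(z_1,z_2)$ in $z_1'$ that do not occur in the raw Taylor series, and one must confirm that these add nothing resonant beyond what the raw terms already supply. This is a finite but sizeable computer-algebra calculation; its only qualitative conclusion is the survival of the single resonant family $R_h$ in the $h_i$-components, forced by $b_c\neq0$, and it is exactly this term that will obstruct the invariant normal foliation --- and hence the smoothness of the block map --- in the sequel.
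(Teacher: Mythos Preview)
Your proposal is essentially the paper's own approach: both reduce the proposition to a computer-algebra computation of the normalising transformation, followed by the verification that the surviving terms lie in $\ker L_{X_0}^*$. The paper's proof is in fact more terse than yours --- it simply asserts that the transformation has been computed and that one may check $\tilde X_0^* R_h=0$ with $R_h$ spanning the degree-$9$ scalar kernel --- so your additional structural explanation (the sector-by-sector decoupling of~\eqref{eqn:cohomop}, the explicit Fischer-projection arithmetic $8/38=4/19$ recovering the coefficient of $R_h$) is a genuine enrichment, and the numerics you give are correct.

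One point of reasoning is not right as written. You argue that ``since $\tilde X_0^*(z_1^a z_2^b)\neq 0$ whenever $\max(a,b)\ge 2$, all of the $x'$- and $y'$-terms (raw and feedback) are non-resonant and are removed''. But a monomial failing to lie in $\ker\tilde X_0^*$ does \emph{not} mean it is non-resonant: non-resonant means Fischer-orthogonal to $\ker\tilde X_0^*$, and the kernel in general contains nontrivial linear combinations of such monomials (indeed your own $R_h$ example shows $z_1^3z_2^6$ has nonzero projection onto a kernel element). The correct statement for the raw $x'$- and $y'$-terms is that their $z$-part is always a multiple of $z_1^2z_2^2$, and one checks directly that $z_1^2z_2^2$ is orthogonal to the degree-$4$ kernel of $\tilde X_0^*$ (which is spanned by $z_2^4-2z_1^3z_2$ and $2z_1z_2^3-z_1^4$, neither containing a $z_1^2z_2^2$ monomial). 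For the feedback terms there is no such shortcut and one really must run the recursion; this is precisely the ``unwieldy'' calculation the paper declines to display. Your acknowledgement that the cross-terms like $(h_2+h_2^*)^2R_{6,2}(z_1,z_2)$ arise only from feedback, and that confirming no further resonances appear requires the full CAS pass, is the honest stance --- just do not lean on the monomial-by-monomial criterion to justify it.
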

		\begin{proof}
			In order to check that this is indeed the normal form, the existence of a transformation taking system \eqref{eqn:generalisedLeviCivita} to system \eqref{eqn:preBlowUpNormalForm} must be computed. Then, one can check that the higher order terms in the normal form system \eqref{eqn:preBlowUpNormalForm} are elements of $ \ker L_{X_0}^* $ by simply applying $ L_{X_0}^* $ given in \eqref{eqn:cohomop} to the higher order terms. In fact, $ \tilde{X}_0^* R_h = 0 $ and this is the unique degree 9 polynomial (up to scaling) in the kernel. Due to the large number of terms in this normal form transformation it is far too unwieldy to include in this paper. The transformation can be provided upon request.
		\end{proof}
		There is a lot of information to unpack from Proposition \ref{prop:normalformforSBC}. Firstly, the normal form procedure concludes with the appearance of resonant terms at degree $ 9 $ in $ z_i $ for the $ h_i $ components of $ X $. This fact gives an answer to our question on existence of an invariant foliation of the normal space to $ \sbc $. 
		\begin{lemma}\label{lem:noFoliatation}
			It is not possible to construct analytic invariants diffeomorphic to $ h_i $.
		\end{lemma}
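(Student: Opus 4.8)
The plan is to argue by contradiction at the level of formal power series, using the explicit normal form of Proposition~\ref{prop:normalformforSBC} to expose a resonant term that any such invariant would be forced to cancel, yet cannot.

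First I would pin down the meaning of the claim in context. Together with $x$ and $y$, which are exact first integrals of the truncated normal form, an analytic invariant diffeomorphic to $h_i$ would be one of the four functions whose common level sets foliate a neighbourhood of $\sbc$ into invariant normal $2$-planes; after composing with a diffeomorphism of the target, it is simply an analytic first integral $\Phi$ of $X$ with $\Phi|_{\sbc}=h_i$. Applying the formal normalising transformation of Theorem~\ref{thm:FormalNF} — which, being a near-identity conjugacy, carries the fixed-point manifold $\sbc$ to itself — replaces $\Phi$ by a formal first integral $\hat\Phi$ of the formal normal form with $\hat\Phi|_{\sbc}=h_i+O(2)$ in the base variables $(x,h_1,h_2,y)$. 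I then write $\hat\Phi=g(x,h_1,h_2,y)+\psi$, where $\partial_{h_i}g(p)=1$ at the collision point $p=(0,0,1,h_1^*,h_2^*,y^*)$ and $\psi$ lies in the ideal $(z_1,z_2)$, and grade $\psi=\sum_{k\ge1}\psi_k$ by degree in $(z_1,z_2)$, with coefficients formal in the base variables. Writing $\mathcal P_k$ for the homogeneous polynomials of degree $k$ in $(z_1,z_2)$, I record the elementary facts that $\tilde X_0$ is injective on $\mathcal P_k$ unless $3\mid k$, in which case $\ker(\tilde X_0|_{\mathcal P_k})=\langle(z_1^3-z_2^3)^{k/3}\rangle$, and that $\ker(\tilde X_0^*)\cap\mathcal P_9=\langle R_h\rangle$, so $\mathrm{Im}(\tilde X_0\colon\mathcal P_8\to\mathcal P_9)=R_h^{\perp}$ in the Fischer inner product.

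Next I would exploit the peculiar shape of the normal form: the $x$- and $y$-components vanish; the $z$-components are $z_{3-i}^2$ together with terms of degree $\ge6$ in $(z_1,z_2)$ (and no terms of degree $3,4,5$); and the $h_i$-component has degree $\ge9$ in $(z_1,z_2)$, with degree-$9$ part $b_c a_i^{-1/3}R_h$. Moreover, since the normal form lies in $\ker L^{*}$ and the $h_i$-row of $L^{*}$ is $\tilde X_0^{*}$, which acts only on $(z_1,z_2)$, the coefficient of each base monomial in the $h_i$-component lies in $\ker\tilde X_0^{*}$; as $\ker\tilde X_0^{*}\cap\mathcal P_9=\langle R_h\rangle$, the entire degree-$9$-in-$(z_1,z_2)$ part of the $h_i$-component equals $R_h\,\Gamma_i$ for a formal series $\Gamma_i$ in the base variables with $\Gamma_i(p)=\pm b_c a_i^{-1/3}\ne0$ (using $b_c=6\bigl(C_1^2(C_4^2 d_3+C_3^2 d_4)+C_2^2(C_4^2 d_1+C_3^2 d_2)\bigr)>0$). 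Expanding $X\hat\Phi=0$ degree by degree in $(z_1,z_2)$: since $g$ does not involve $z_1,z_2$, the low-degree equations read $\tilde X_0\psi_1=\tilde X_0\psi_2=0$ and then $\tilde X_0\psi_4=\tilde X_0\psi_5=0$, forcing $\psi_1=\psi_2=\psi_4=\psi_5=0$, while $\psi_3,\psi_6$ are left as multiples of $z_1^3-z_2^3$ and $(z_1^3-z_2^3)^2$. Feeding the surviving corrections through the degree-$\ge6$ nonlinearity shows none of $\psi_3,\psi_6,\psi_7$ reaches degree $9$, so the degree-$9$-in-$(z_1,z_2)$ part of $X\hat\Phi=0$ collapses to
\[
\tilde X_0\,\psi_8 \;=\; -\,R_h\,\bigl(\Gamma_1\,\partial_{h_1}g + \Gamma_2\,\partial_{h_2}g\bigr).
\]

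Finally I would read off the contradiction: the right-hand side has base-constant part $-\Gamma_i(p)\,R_h\ne0$, hence nonzero $R_h$-component, while the left-hand side lies in $\mathrm{Im}(\tilde X_0\colon\mathcal P_8\to\mathcal P_9)=R_h^{\perp}$; no $\psi_8$ solves it. Thus $X$ admits no analytic first integral restricting to $h_i$ on $\sbc$, which is the lemma. The step I expect to be the real work — and the main obstacle — is the degree bookkeeping of the previous paragraph: one must verify that the resonant term $b_c a_i^{-1/3}R_h$ genuinely persists to the degree-$9$ equation, i.e.\ that no interaction of the corrections $\psi_1,\dots,\psi_7$ with the degree-$\ge6$ part of the $z$-components, or with the a priori unknown degree-$\ge10$ normal-form terms, can feed a compensating multiple of $R_h$ into it. The absence of degrees $3,4,5$ in the $z$-components and the degree-$\ge9$ vanishing of the $h_i$-components in Proposition~\ref{prop:normalformforSBC} are exactly what make this go through, and restricting attention to the base-constant part of the obstruction neutralises the unknown higher-order terms; a minor subsidiary check is that the normalising transformation does no damage, since it only replaces $h_i$ by $h_i+O(2)$ in $g$ with $\partial_{h_i}g(p)\ne0$.
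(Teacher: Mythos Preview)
Your proposal is correct and follows essentially the same route as the paper: pass to normal-form coordinates, grade the putative invariant by degree in $(z_1,z_2)$, show the low-degree corrections lie in $\ker\tilde X_0$ (hence vanish at degrees not divisible by $3$), and read off from the degree-$9$ equation that $R_h$ would have to lie in $\mathrm{Im}\,\tilde X_0$, contradicting $R_h\in\ker\tilde X_0^*$. The only cosmetic differences are that you carry the full formal normal form and a general base function $g$ where the paper works directly with the degree-$9$ truncation $X^9$ and the simpler ansatz $\tilde h_i=h_i+F$; your extra bookkeeping (ruling out contributions from unknown degree-$\ge10$ terms and verifying $\psi_3,\psi_6,\psi_7$ cannot feed into degree $9$) is sound and makes explicit what the paper's truncation leaves implicit.
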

		\begin{proof}
			Let $ \phi_8 $ be the transformation bringing $ X $ into the normal form $ X^9 $. Note that $ \phi_8 $ is a degree $ 8 $ polynomial in $ z_1,z_2 $. Then from the fact $ x^\prime=y^\prime = 0 $ in the normal form \eqref{eqn:preBlowUpNormalForm} it is clear the $ x,y $ components of $ \phi_8 $ are invariant to order $ 8 $ in $ z_1,z_2 $. Moreover, due to the resonant terms $ R_h $ of degree $ 9 $, the $ h_i $ components of $ \phi_8 $ fail to be invariants at order $ 8 $ in $ z_i $. To prove the lemma it is sufficient to show that there does not exist a transformation of the form,
			\[ \tilde{h}_i = h_i + F(z_1,z_2,x,h_1,h_2,y), \]
			with $ F $ some analytic function such that $ \tilde{h}_i^\prime = 0 $. We will prove this for $ h_1 $ as the case $ h_2 $ follows analogously. To show this, decompose $ F $ into,
			$ F =  \sum_{d\geq 1 } F_d(z_1,z_2)  $,
			where each $ F_d $ is a homogeneous polynomial in $ (z_1,z_2) $ of degree $ d+1 $ and with coefficients analytic functions in $ x,h_1,h_2,y $. Recall that $ \tilde{X} $ is the derivation associated to a vector field $ X $ and let $ X_4^9 $ be the degree 6 components of the normal form \eqref{eqn:preBlowUpNormalForm}. Then assuming there exists $ \tilde{h}_1 $ with $ \tilde{h}_1^\prime = 0 $ we have,
			\begin{align*}
				\tilde{h}_1^\prime	&= \tilde{X}^9(\tilde{h}_1) \\
								0	&= \tilde{X}^9(h_1) + \tilde{X}^9 \left(\sum_{d\geq 1 } F_d(z_1,z_2)\right) \\
								0	&= h_1^\prime + (\tilde{X}_0 + \tilde{X}_4^9 + \dots)(F_1 + F_2 + \dots) \\
								0	&= b_c a_1^{-1/3} R_h(z_1,z_2) + \tilde{X}_0(F_1) + \tilde{X}_0(F_1) + \dots + \tilde{X}_4^9(F_1) + \dots. \numberthis\label{eqn:CorExpansion}
			\end{align*}
			Now $ \tilde{X}_0 : \mathcal{H}_d \to \mathcal{H}_{d+1} $, $ \tilde{X}_4^9:\mathcal{H}_d \to \mathcal{H}_{d+5} $ and $  R_h(z_1,z_2) \in \mathcal{H}_{8} $. Taking all elements on the rhs in $ \mathcal{H}_{8} $ we obtain the equation,
			\[ 0 = b_c a_1^{-1/3} R_h(z_1,z_2) + \tilde{X}_0(F_7) + \tilde{X}^9_4(F_3). \]
			So, we require $ F_7 $ or $ F_3 $ to be found which cancels with the $ R_h $ term if the approximate integral $ \tilde{h}_i $ exists. But, by the normal form procedure $ R_h \in \ker L_d^* $ which, from \eqref{eqn:cohomop} implies $ R_h \in \ker \tilde{X}_0^* $. As $ \Ima \tilde{X}_0 $ is the orthogonal complement to $ \ker \tilde{X}_0^* $ we are guaranteed that $ R_h \nin \Ima \tilde{X}_0  $, thus no such $ F_7 $ can be found. Moreover, by collecting the terms in $ \mathcal{H}_{3} $ terms in the expansion \eqref{eqn:CorExpansion}, we obtain $ \tilde{X}_0(F_3) = 0 $. That is, $ F_3 \in \ker \tilde{X}_0 $. Dynamically this says that $ F_3 $ is invariant under the flow of the leading order terms $ X_0 $. But it is easily verified that the only invariants of $ X_0 $ are polynomials in the leading order invariant 
			\[ \hat{\kappa} = z_1^3 - z_2^3.  \]
			As any homogeneous polynomial in $ \hat{\kappa} $ must be degree $ 3 j $ and hence in $ \mathcal{H}_{3j-1} $, it follows that $ F_3 = 0 $. Thus, no such $ F_7 $ or $ F_3 $ exists and the lemma follows in consequence.
		\end{proof}

		\begin{remark}
			Note that, after blow-up, degree $ 9 $ terms become degree $ 8 $ terms due to the rescaling by $ d\tau = r dt $. Therefore, an obstacle to the foliation is occurring at degree 8 in the blow-up space. 
		\end{remark}
		\begin{remark}\label{rmk:canIgnoreR6}
			 The resonant term $ R_h $ appearing in the intrinsic energy components have $ b_c $ as a factor, implying these terms come from the first coupled monomial $ z_1^4 z_2^4 $ from the expanded potential $ K $ in \eqref{eqn:potential}. Moreover, the absence of $ b_{ij}, b_0 $ in the normal form show $ K_1, K_2 $ make no contribution to the resonant terms in the normal form. Lastly, the only terms in $ X $ from \eqref{eqn:generalisedLeviCivita} which are independent of the mass constants are the terms in the $ z_i $ components. As $ R_{6,j} $ are independent of the masses as well, they must then come from the kinetic energy terms. A Hamiltonian with only kinetic terms has no singularities and is thus analytically regularisable. Under this reasoning, it must follow that the $ R_{6,j} $ terms make no contribution to any finite differentiability of the block map $ \pib $. This will be explicitly confirmed in the computation of the asymptotic series of $ \pib $ in Section \ref{sec:proofOfMainTheorem}.
		\end{remark}
		\begin{remark}
			There are three invariants up to order $ 8 $ given by
			\begin{equation}
				H = a_2^{-1/3} h_1 + a_1^{-1/3} h_2
			\end{equation}
			and the transformed $ x $ and $ y $ variables. There is in fact another. The degree 3 polynomial $ \hat{\kappa} = z_2^3 - z_1^3 $, an integral of $ X_0 $, was essential in the proof of Lemma \ref{lem:noFoliatation}. This integral can be extended to a degree $ 7 $ integral of the normal form $ X^9 $,
			\begin{equation}\label{eqn:kappaOriginal}
				\begin{aligned}
					\kappa(z_1,z_2,h_1,h_2) &= \frac{1}{6}(z_1^3 - z_2^3) + (h_1 + h_1^*)^2 \kappa_7(z_1,z_2) - (h_2 + h_2^*)^2\kappa_7(z_2,z_1), \\
					\kappa_7(z_1,z_2)		&= \frac{1}{50365} z_1 \left( 485 z_1^6 - 665 z_1^3 z_2^3 + 308 z_2^6 \right).
				\end{aligned}
			\end{equation}
			In a sense, the existence of this integral reflects Remark \ref{rmk:canIgnoreR6} and the heuristic argument that smooth block-regularisation may imply a type of flow box theorem. The integral $ \kappa $ will play a central role in showing the $ R_6 $ terms do not affect the $ 8/3 $ regularity of the block map.
		\end{remark}
	
	\subsection{Geometric Sketch of Proof}\label{sec:GeometricSketch}
		A procedure for determining the finite differentiability of the block map $ \pib: n^+ \to n^- $ is now sketched. Recall from Proposition \ref{prop:structureOfCollisionManifold} the topological structure of the flow near $ \mathcal{C} $. If $ \mathcal{T} $ is a tubular neighbourhood  of $ \mathcal{C} $, then Proposition \ref{prop:structureOfCollisionManifold} reveals a natural decomposition of $ \mathcal{T} $ into four overlapping neighbourhoods \[ \mathcal{T} = \mathcal{U}^+\cup\mathcal{U}^-\cup \mathcal{V}^+ \cup\mathcal{V}^- ,\] where $ \mathcal{U}^+, \mathcal{U}^- $ are tubular neighbourhoods of the normally hyperbolic invariant manifolds $ \NHIM^+, \NHIM^- $ respectively, and $ \mathcal{V}^+, \mathcal{V}^- $ are a tubular neighbourhoods of one of the two manifolds of heteroclinic orbits. The decomposition splits $ \mathcal{T} $ into regions where the flow is topologically equivalent to a neighbourhood of a normally hyperbolic saddle ($ \mathcal{U}^+, \mathcal{U}^- $) and regions where the flow is topologically equivalent to a regular flow ($ \mathcal{V}^+, \mathcal{V}^- $).
		
		\begin{figure}[h]\label{fig:uncoupledsections}
			\centering
	\def\svgwidth{0.7\textwidth}
\begingroup%
  \makeatletter%
  \providecommand\color[2][]{%
    \errmessage{(Inkscape) Color is used for the text in Inkscape, but the package 'color.sty' is not loaded}%
    \renewcommand\color[2][]{}%
  }%
  \providecommand\transparent[1]{%
    \errmessage{(Inkscape) Transparency is used (non-zero) for the text in Inkscape, but the package 'transparent.sty' is not loaded}%
    \renewcommand\transparent[1]{}%
  }%
  \providecommand\rotatebox[2]{#2}%
  \newcommand*\fsize{\dimexpr\f@size pt\relax}%
  \newcommand*\lineheight[1]{\fontsize{\fsize}{#1\fsize}\selectfont}%
  \ifx\svgwidth\undefined%
    \setlength{\unitlength}{884.00001346bp}%
    \ifx\svgscale\undefined%
      \relax%
    \else%
      \setlength{\unitlength}{\unitlength * \real{\svgscale}}%
    \fi%
  \else%
    \setlength{\unitlength}{\svgwidth}%
  \fi%
  \global\let\svgwidth\undefined%
  \global\let\svgscale\undefined%
  \makeatother%
  \begin{picture}(1,0.90905464)%
    \lineheight{1}%
    \setlength\tabcolsep{0pt}%
    \put(0,0){\includegraphics[width=\unitlength,page=1]{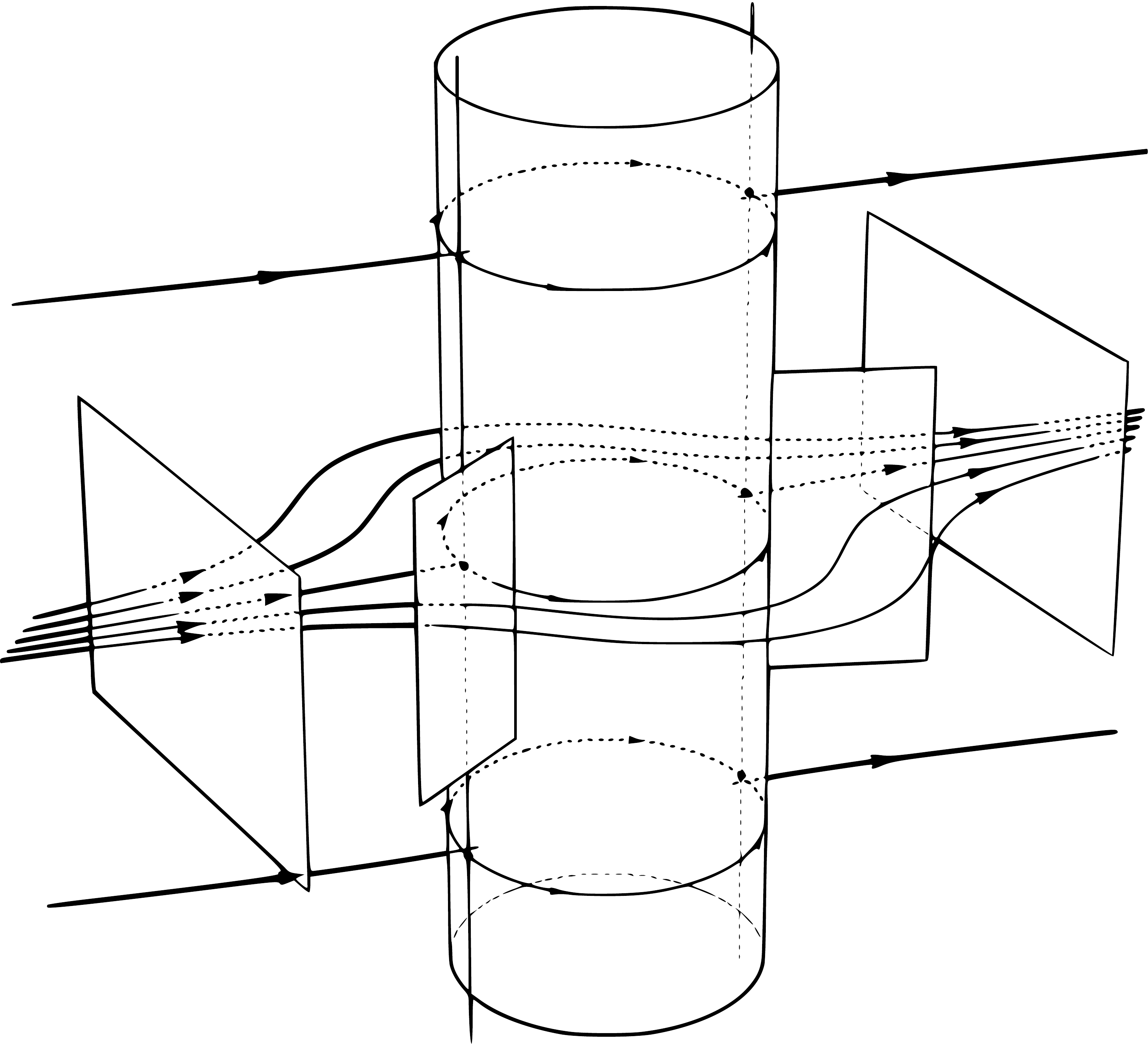}}%
    \put(0.40546216,0.85540555){\color[rgb]{0,0,0}\makebox(0,0)[lt]{\lineheight{2}\smash{\begin{tabular}[t]{l}$ \NHIM^+ $\end{tabular}}}}%
    \put(0.65998705,0.88972766){\color[rgb]{0,0,0}\makebox(0,0)[lt]{\lineheight{2}\smash{\begin{tabular}[t]{l}$ \NHIM^- $\end{tabular}}}}%
    \put(0.20850837,0.21969934){\color[rgb]{0,0,0}\makebox(0,0)[lt]{\lineheight{2}\smash{\begin{tabular}[t]{l}$ \Sigma_0 $\end{tabular}}}}%
    \put(0.90814881,0.41089602){\color[rgb]{0,0,0}\makebox(0,0)[lt]{\lineheight{2}\smash{\begin{tabular}[t]{l}$ \Sigma_3 $\end{tabular}}}}%
    \put(0.76725109,0.55785382){\color[rgb]{0,0,0}\makebox(0,0)[lt]{\lineheight{2}\smash{\begin{tabular}[t]{l}$ \Sigma_1^+ $\end{tabular}}}}%
    \put(0.37091947,0.29726888){\color[rgb]{0,0,0}\makebox(0,0)[lt]{\lineheight{2}\smash{\begin{tabular}[t]{l}$ \Sigma_2^+ $\end{tabular}}}}%
  \end{picture}%
\endgroup%

			\caption{A geometric sketch of the required computation. The dynamics local to the manifold has been split into hyperbolic regions ($ \Sigma_0\to\Sigma_1^+ $ and $ \Sigma_2^+\to\Sigma_3 $) and a smooth transition region ($ \Sigma_1^+\to\Sigma_2^+ $).}
		\end{figure}
		
		Now, the collision-ejection manifold $ \CE := \CE^+ \cup \CE^- $ splits $ \mathcal{T} $ into two disjoint segments, one containing $ \mathcal{V}^+ $ and the other $ \mathcal{V}^- $. It follows that $ \pib $ can be split into its restriction to these two segments, say $ \pib^+ $ and $ \pib^- $. The key to the calculation is to introduce two intermediate sections, $ \Sigma_1^+ \subset \mathcal{U}^+\cap\mathcal{V}^+ $ and $ \Sigma_2^+ \subset \mathcal{V}^+\cap\mathcal{U}^- $, that are both transversal to the flow and intersect the heteroclinic connection (see Figure \ref{fig:uncoupledsections}). In doing so, the block map $ \pib^+ $ can be decomposed into $ \pib^+ = D_2^+\circ T^+\circ D_1^+ $ where \[ D_1^+:n^+\to\Sigma_1^+,\ T^+:\Sigma_1^+\to\Sigma_2^+ \text{ and } D_2^+:\Sigma_2^+\to n^-. \] Analogously, take $ \pib^- = D_2^-\circ T^-\circ D_1^- $ where $ D_1^-:n^+\to\Sigma_1^-,\ T^-:\Sigma_1^-\to\Sigma_2^- $ and $ D_2^-:\Sigma_2^-\to n^- $. 
		
		$ D_1^+, D_2^+ $ are transitions near a normally hyperbolic manifold of hyperbolic saddles and $ T^+ $ is a regular transition map. It is now obvious what needs to be done; first compute the hyperbolic passages $ D_1^\pm,D_2^\pm $, glue them together with the relevant regular transition map $ T^\pm $ to get $ \pib^\pm $, and compare the one-sided asymptotics towards $ \mathcal{E}^{\pm} $ of $ \pib^+ $ and $ \pib^- $. In fact, with some knowledge of hyperbolic transition maps, it is already possible from this sketch to see how the finite differentiability of the block map will creep in.
		
	\subsection{The Block-map is Quasi-Regular}\label{sec:BlockMapQuasiRegular}
		Due to their relevance to Hilbert's sixteenth problem, the asymptotic properties of hyperbolic transition maps have been well studied for planar vector fields, see for instance \cite{roussarieBifurcationPlanarVector1998,Dumortier1977}. In this context they are called Dulac maps. The Dulac maps have been shown to be \textit{quasi-regular} (also referred to as \textit{almost regular}). We extend the definition of quasi-regular as given by Roussarie \cite{roussarieBifurcationPlanarVector1998} to higher dimensions.
		\begin{definition}
			Let $ (x,u)\in\R^+\times\R^k $. A germ of a map $ f:\R^+\times\R^{k}\to\R^+\times\R^{k} $ at $ 0 $ is called \textit{quasi-regular in $ x $} if there exists $ a,b >0 $ such that
			\begin{enumerate}[i.]
				\item $ f $ has a representative on $ [0,a)\times(-b,b)^{k} $ that is $ C^\infty $ on $ (0,a)\times(-b,b)^{k} $.
				\item $ \lim_{x\to 0} f(x,u) = (0,Au) $, for some linear map $ A:\R^{k}\to\R^{k} $.
				\item The components of $ f - (0, A u )= (f_0,f_1,\dots,f_{k}) $ are asymptotic to the Dulac series,
				\begin{equation*}
					\hat{f}_k(x,u) = \sum_{j=1}^\infty x^{\rho_j} P^k_{j}(u,\ln x),
				\end{equation*}
				with $ 0\neq\rho_j\in \R^+ $, $ P^k_{j} $ is a sequence of polynomials in $ x $ with coefficients smooth in $ u $, and the sum is taken with ordering $ 0 < \rho_j \leq \rho_{j+1}  $. 
				
				Define $ f $ as a \textit{quasi-regular homeomorphism in $ x $} if $ f $ is quasi-regular and $ P^1_1(x,u) = p(u) $ with $ p(u) $ positive on $ (-b,b)^{k} $ and $ A $ is invertible.
			\end{enumerate}
		\end{definition}
		\begin{remark}\label{rmk:QuasiRegularGroup}
			The set of all quasi-regular homeomorphisms in $ x $, denoted by $ \mathcal{D} $, is a group under composition. Further, the group of diffeomorphisms with $ f(0,u)= (0, A u) $, for some invertible $ A $, and $ \frac{\partial f_1}{\partial x}(0,0) > 0 $, is a subgroup of $ \mathcal{D} $. 
		\end{remark}
		
		In the current work we wish to obtain asymptotic properties of transition maps near manifolds of normally hyperbolic saddle singularities. Specifically, let $ \mathcal{N} $ be a manifold of normally hyperbolic saddle singularities of co-dimension 2 inside some vector field $ X $. Take $ u_0 \in \mathcal{N} $ and denote the non-zero eigenvalues of $ X $ at $ u_0 $ by $ \lambda_1(u_0) < 0 < \lambda_2(u_0) $. Without loss of generality take coordinates $ (x,y,u) \in \R\times\R \times \R^k $ local to $ u_0 $ such that $ u_0 $ is at the origin and $ \NHIM $ is given by $ x=y=0 $. Moreover, assume that centre-stable $ W^s(\NHIM) $ and centre-unstable $ W^u(\NHIM) $ manifolds of $ u_0 $ are aligned with the $ x,y $ axis.
		
		From the work of \cite{duignanNormalFormsManifolds}, many details about normal forms and transition maps near manifolds of normally hyperbolic singularities are known. Let $ u_0 \in \NHIM $. A crucial object in the study of the transition map is $ \rho(u_0) := -\lambda_1(u_0)/\lambda_2(u_0) $, the so called \textit{ratio of hyperbolicity}. From Proposition \ref{prop:structureOfCollisionManifold}, the ratio of hyperbolicity of $ \NHIM^+, \NHIM^{-} $ takes the constant value $ 1/3 $ and $ 3 $ respectively for any $ u_0 \in \NHIM^+ $ or $ \NHIM^- $. The following proposition from \cite{duignanNormalFormsManifolds} gives the normal form near a point $ u_0 \in\NHIM $ assumed to be $ 0 $.
		
		\begin{proposition}[\cite{duignanNormalFormsManifolds}]\label{prop:normalformnhim}
			If, for all $ u_0 \in \NHIM $, $ \rho = \frac{p}{q} \in \Q,$ with $ p,q $ co-prime, then there exists a $ C^\infty $, near identity transformation $ \Phi $ and a smooth time rescaling bringing $ X $ into the normal form,
			\begin{equation}\label{eqn:normalformnhim}
				\begin{aligned}
					\dot{x}	&= x \\
					\dot{y}	&= -\rho y + \frac{1}{q} y \sum_{j \geq 1} \alpha_{j}(u) (x^p y^q)^j \\
					\dot{u}_i &= \sum_{j \geq 1} \delta^i_{j}(u)  (x^p y^q)^j, \quad i=1,\dots,k
				\end{aligned}
			\end{equation}
			with $ \alpha_j(u), \delta_j(u) $ smooth functions in $ u $. If $ \rho \nin \Q $ then $ \alpha_j = \delta_j = 0 $.
		\end{proposition}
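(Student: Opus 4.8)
The plan is to read this off as a parametrized (by the central variable $u\in\NHIM$) version of Sternberg's smooth normal form theorem for a hyperbolic planar saddle: the shape of the surviving terms is forced by the resonances of the linear part $\diag(\lambda_2(u),\lambda_1(u),0,\dots,0)$, with $\lambda_1(u)<0<\lambda_2(u)$ and $-\lambda_1(u)/\lambda_2(u)\equiv\rho$, and the only real work is promoting the resulting formal normalization to a $C^\infty$ one.

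First I would fix the coarse structure from the hypotheses. Since $\NHIM=\{x=y=0\}$ is invariant with $X|_{\NHIM}=0$, and $W^u(\NHIM)=\{y=0\}$, $W^s(\NHIM)=\{x=0\}$ are invariant, the components of $X$ are divisible accordingly: $\dot x=x\,f(x,y,u)$ and $\dot y=y\,g(x,y,u)$ with $f(0,0,u)=\lambda_2(u)>0$, $g(0,0,u)=\lambda_1(u)<0$, while $\dot u_i$ vanishes on $\NHIM$. (The $C^\infty$ regularity of $W^{s,u}(\NHIM)$ invoked here is standard normally hyperbolic manifold theory, the spectral gaps being automatic because the central eigenvalues are $0$ and the normal ones nonzero.) A smooth time rescaling by $1/\lambda_2(u)$ normalizes the unstable eigenvalue, giving $\dot x=x(1+\cdots)$ and $\dot y=-\rho y+y(\cdots)$ with the $(\cdots)$ vanishing on $\NHIM$; crucially $g/\lambda_2$ has the constant value $-\rho$ on $\NHIM$ because $\lambda_1/\lambda_2\equiv-\rho$ is exactly constant by hypothesis, so after rescaling the degree-one-in-$(x,y)$ parts of $\dot x$ and $\dot y$ are precisely $x$ and $-\rho y$.

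Next I would run the parametrized analogue of Theorem~\ref{thm:FormalNF}: grade by degree in $(x,y)$ with coefficients smooth in $u$, and solve at each order a $u$-dependent cohomological equation that is invertible off the resonant subspace uniformly in $u$ (by hyperbolicity of the normal eigenvalues). With eigenvalues $(1,-\rho,0,\dots,0)$ and $\rho=p/q$ in lowest terms, a monomial $x^ay^bu^c\partial_y$ lies in the kernel of the cohomological operator iff $a=(b-1)\rho$, i.e. $b=qj+1,\ a=pj$; a monomial $x^ay^bu^c\partial_{u_i}$ iff $b=qj,\ a=pj$; and $x^ay^bu^c\partial_x$ iff $b=qj,\ a=pj+1$. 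Hence, writing $v:=x^py^q$, the formal normal form is
\[
\dot x=x\Bigl(1+\sum_{j\geq1}\beta_j(u)v^j\Bigr),\qquad \dot y=-\rho y+y\sum_{j\geq1}\tilde\alpha_j(u)v^j,\qquad \dot u_i=\sum_{j\geq1}\delta^i_j(u)v^j,
\]
the sums starting at $j=1$ because $\dot u_i$ and the nonlinear parts vanish on $\NHIM$ and no $u$-only monomial can survive in $\dot u_i$. A further \emph{orbital} time rescaling by $\bigl(1+\sum_{j\geq1}\beta_j(u)v^j\bigr)^{-1}$ — which leaves $v$ unchanged and so preserves the shape of the $\dot y$ and $\dot u$ equations — makes $\dot x=x$ exactly; rescaling the $\tilde\alpha_j$ by $q$ produces the factor $1/q$ of the statement and the clean identity $\dot v=\sum_{j\geq1}\alpha_j(u)v^{j+1}$. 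When $\rho\notin\Q$ the resonance conditions have only the linear solutions, so $\alpha_j\equiv\delta^i_j\equiv0$ and the normal form is $\dot x=x$, $\dot y=-\rho y$, $\dot u_i=0$.

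The substantive point — and the main obstacle — is to realize this formal normalization by a genuine $C^\infty$ transformation together with a $C^\infty$ time rescaling, rather than merely a formal series in $(x,y)$ with coefficients in $u$. This is the normally-hyperbolic-saddle version of the Sternberg/Chen smooth normal form theorem: one realizes the formal transformation by a $C^\infty$ map with the prescribed infinite jet (Borel's theorem), leaving a remainder flat along $\NHIM$, and then closes the argument with a contraction/graph-transform scheme in which the hyperbolic expansion and contraction in $(x,y)$ dominate the flat remainder while the slow central drift $\dot u=O(v)$ is harmless. This is exactly the machinery built in \cite{duignanNormalFormsManifolds}, so in the present paper the proof reduces to checking its hypotheses — that $\NHIM^\pm$ are $C^\infty$ normally hyperbolic manifolds of hyperbolic saddles with constant rational ratio of hyperbolicity ($1/3$ and $3$ respectively, by Proposition~\ref{prop:structureOfCollisionManifold}) — and invoking it.
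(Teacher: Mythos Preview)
Your sketch is sound, and your closing paragraph already identifies the situation correctly: in the present paper Proposition~\ref{prop:normalformnhim} is not proved at all but simply quoted from \cite{duignanNormalFormsManifolds}, so there is no ``paper's own proof'' to compare against beyond the citation itself. Your outline --- align the invariant manifolds, rescale time to normalize the unstable eigenvalue, solve the parametrized cohomological equation to isolate the resonant monomials $x^{pj}y^{qj}$, then kill the $\dot x$ resonances by a further orbital rescaling, and finally invoke a Sternberg/Chen-type argument to upgrade the formal series to a genuine $C^\infty$ conjugacy --- is exactly the strategy one expects the cited work to carry out, and the resonance bookkeeping you give is correct (including the pleasant check that $\dot v=\sum_j\alpha_j(u)v^{j+1}$ once $\dot x=x$).
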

		
		Now, denote the normal form of $ X $ by $ X_N $ and consider the transverse sections $ \sigma_0 = \{ y = 1 \} $ and $ \sigma_1 = \{ x=1 \} $ in $ X_N $. Let $ (x_0,u_0) ,(y_1,u_1) $ be coordinates on $ \sigma_0,\sigma_1 $ respectively. Define the transition map $ D : \sigma_0 \to \sigma_1 $ which is given in components by
		\begin{equation*}
			y_1 = D_y(x_0,y_0,u_0), \quad  u_1 = D_u(x_0,y_0,u_0).
		\end{equation*} 
		As in the literature on planar vector fields, call this specific transition map the \textit{Dulac map}. A diagram of the Dulac map is given in Figure \ref{fig:1dN} for the case $ \mathcal{N} $ is dimension 1 inside $ \R^3 $.
		\begin{figure}[ht]
			\centering
	\def\svgwidth{0.5\textwidth}
\begingroup%
  \makeatletter%
  \providecommand\color[2][]{%
    \errmessage{(Inkscape) Color is used for the text in Inkscape, but the package 'color.sty' is not loaded}%
    \renewcommand\color[2][]{}%
  }%
  \providecommand\transparent[1]{%
    \errmessage{(Inkscape) Transparency is used (non-zero) for the text in Inkscape, but the package 'transparent.sty' is not loaded}%
    \renewcommand\transparent[1]{}%
  }%
  \providecommand\rotatebox[2]{#2}%
  \newcommand*\fsize{\dimexpr\f@size pt\relax}%
  \newcommand*\lineheight[1]{\fontsize{\fsize}{#1\fsize}\selectfont}%
  \ifx\svgwidth\undefined%
    \setlength{\unitlength}{669bp}%
    \ifx\svgscale\undefined%
      \relax%
    \else%
      \setlength{\unitlength}{\unitlength * \real{\svgscale}}%
    \fi%
  \else%
    \setlength{\unitlength}{\svgwidth}%
  \fi%
  \global\let\svgwidth\undefined%
  \global\let\svgscale\undefined%
  \makeatother%
  \begin{picture}(1,0.71300448)%
    \lineheight{1}%
    \setlength\tabcolsep{0pt}%
    \put(0,0){\includegraphics[width=\unitlength,page=1]{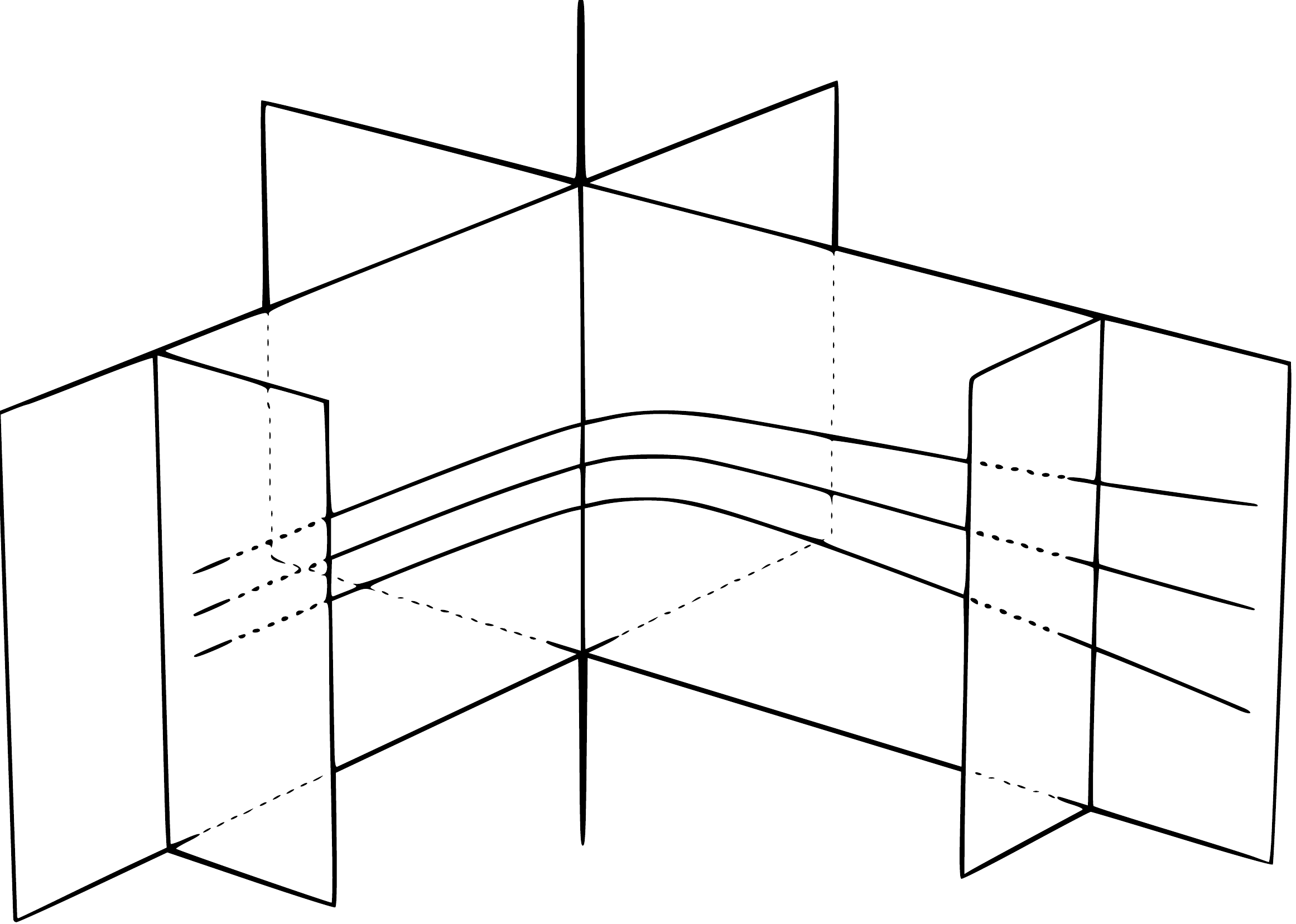}}%
    \put(0.19226834,0.18158677){\color[rgb]{0,0,0}\makebox(0,0)[lt]{\lineheight{2}\smash{\begin{tabular}[t]{l}$\Sigma_0$\end{tabular}}}}%
    \put(0.45610324,0.63021289){\color[rgb]{0,0,0}\makebox(0,0)[lt]{\lineheight{2}\smash{\begin{tabular}[t]{l}$\NHIM$\end{tabular}}}}%
    \put(0.22858569,0.45076244){\color[rgb]{0,0,0}\makebox(0,0)[lt]{\lineheight{2}\smash{\begin{tabular}[t]{l}$W^s(\NHIM)$\end{tabular}}}}%
    \put(0.51271553,0.46464846){\color[rgb]{0,0,0}\makebox(0,0)[lt]{\lineheight{2}\smash{\begin{tabular}[t]{l}$W^u(\NHIM)$\end{tabular}}}}%
    \put(0.76480075,0.18051859){\color[rgb]{0,0,0}\makebox(0,0)[lt]{\lineheight{2}\smash{\begin{tabular}[t]{l}$\Sigma_1$\end{tabular}}}}%
    \put(0,0){\includegraphics[width=\unitlength,page=2]{1dN.pdf}}%
    \put(0.48565879,0.29305695){\color[rgb]{0,0,0}\makebox(0,0)[lt]{\lineheight{2}\smash{\begin{tabular}[t]{l}$D$\end{tabular}}}}%
  \end{picture}%
\endgroup%

			\caption{Diagram of the case $ \mathcal{N} $ is co-dimension 2 in $ \R^3 $}
			\label{fig:1dN}
		\end{figure}
		
		The following proposition from \cite{duignanNormalFormsManifolds} gives the asymptotic structure of the Dulac map $ D $.
		\begin{proposition}[\cite{duignanNormalFormsManifolds}]\label{prop:asymStructureofD}
			If, for all $ u_0\in\NHIM $, $ \rho(u_0)  = \frac{p}{q} \in \Q $ with $ p,q $ co-prime, then the Dulac map $ D = (D_x, D_u) $ has the asymptotic series
			\begin{equation}
				\begin{aligned}
					D_y(x_0,u_0)	&\sim x_0^{\rho} \left(1+\sum_{j \geq 1} P_y^{j}(u_0; \ln x_0) x_0^{ j p} \right)  \\
					D_u(x_0,u_0)	&\sim u_0 + \sum_{j \geq 1} P_u^{j}(u_0; \ln x_0) x_0^{j p} ,
				\end{aligned}
			\end{equation}
			where $ P_y^{j},P_{u_i}^{j} $ are polynomial in $ \ln x_0 $ with coefficients smooth in $ u_0 $ and $ P_y^{j}(0,u_0) = P_{u_i}^{j}(0,u_0) = 0 $. Moreover, $ P_y^{j} $ and $ P_{u_i}^j $ are polynomial in $ \alpha_{l}(u_0),\delta_{l}(u_0) $ for $ l \leq j $ with vanishing constant term. 
			
			If $ \rho(u_0) \nin \Q $ then $ P_y^{j} = P_u^{j} = 0 $ for all $ j \geq 1 $. In either case $ D $ is quasi-regular.
		\end{proposition}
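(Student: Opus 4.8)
The plan is to compute the Dulac map $D$ by integrating the normal form \eqref{eqn:normalformnhim} explicitly, order by order, exploiting that its hyperbolic $x$-direction is linear. Because $\dot x = x$, an orbit leaving $\sigma_0 = \{y = 1\}$ with $x$-coordinate $x_0 > 0$ satisfies $x(t) = x_0 e^t$ and so reaches $\sigma_1 = \{x = 1\}$ at the transition time $T = -\ln x_0$, which diverges to $+\infty$ as $x_0 \to 0^+$. The first step is the substitution $y = w\,x^{-\rho}$: a direct computation shows it annihilates the linear term $-\rho y$ and, crucially, that $x^p y^q = w^q$ identically along orbits, so the transformed system in $(w,u)$ is \emph{autonomous},
\[
  \dot w = \tfrac1q\, w \sum_{j\geq 1}\alpha_j(u)\, w^{qj}, \qquad \dot u_i = \sum_{j\geq 1}\delta^i_j(u)\, w^{qj},
\]
with right-hand sides vanishing on the invariant plane $\{w = 0\}$ to orders $q+1$ and $q$ respectively. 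The section data become the initial condition $w(0) = x_0^{\rho}$, $u(0) = u_0$, and one then reads off $D_y(x_0,u_0) = w(T)$ and $D_u(x_0,u_0) = u(T)$. In the non-resonant case Proposition \ref{prop:normalformnhim} forces $\alpha_j \equiv \delta_j \equiv 0$, so $w(t) \equiv x_0^{\rho}$ and $u(t) \equiv u_0$, giving $D_y = x_0^{\rho}$, $D_u = u_0$ exactly; $D$ is then trivially quasi-regular, so one may assume henceforth $\rho = p/q \in \Q$.

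The second step is to expand the time-$T$ flow of the autonomous system as an asymptotic series in the small parameter $\varepsilon := x_0^{p} = w(0)^q$, with $u_0$ a parameter ranging over a compact set. Starting from $w \equiv x_0^{\rho}$, $u \equiv u_0$, each Picard iteration integrates over $[0,T]$ a right-hand side carrying one further power of $\varepsilon$; since the coefficient of each power of $\varepsilon$ in the integrand is a polynomial in $t$, and $\int_0^T(\text{polynomial in }t)\,dt$ is a polynomial in $T$ \emph{with vanishing constant term}, every correction picks up a factor $x_0^{p}$ together with a polynomial in $\ln x_0$ having no constant term. Re-summing by powers of $x_0^{p}$ produces
\[
  D_y(x_0,u_0) \sim x_0^{\rho}\Bigl(1 + \sum_{j\geq 1}P_y^j(u_0;\ln x_0)\,x_0^{jp}\Bigr), \qquad D_u(x_0,u_0) \sim u_0 + \sum_{j\geq 1}P_u^j(u_0;\ln x_0)\,x_0^{jp},
\]
where $P_y^j, P^j_{u_i}$ are polynomial in $\ln x_0$ with vanishing constant term and coefficients smooth in $u_0$; because only the first $j$ terms of the two series in \eqref{eqn:normalformnhim} enter the first $j$ iterations, each $P^j$ is polynomial in $\alpha_l(u_0), \delta_l(u_0)$, $l \leq j$, with vanishing constant term, and in particular $P^1_y = -\tfrac1q\alpha_1(u_0)\ln x_0$, $P^1_{u_i} = -\delta^i_1(u_0)\ln x_0$. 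The exponents that occur, $\rho + jp$ in the $y$-slot and $jp$ in the $u$-slot, are strictly positive, so these are genuine Dulac series; combined with smoothness of the flow of $X_N$ on $\{x > 0\}$ and the limit $D(x_0,u_0) \to (0,u_0)$ (so the linear part is $A = \mathrm{Id}$), this exhibits $D$ as an element of the group $\mathcal{D}$ of Remark \ref{rmk:QuasiRegularGroup}, hence quasi-regular.

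The step I expect to be the real obstacle is turning the \emph{formal} expansion above into a genuine \emph{asymptotic} one, with remainder estimates uniform in $u_0$, over the \emph{unbounded} transition time $T = -\ln x_0$. Two estimates are needed. First, a bootstrap argument in the slow variables $v := w^q$ and $u$: since $v(0) = \varepsilon$ and $\dot v = O(v^2)$, the total variation of $v$ over $[0,T]$ is $O(\varepsilon^2 T) = O(x_0^{2p}\ln(1/x_0)) \to 0$, and likewise $|u(T) - u_0| = O(x_0^{p}\ln(1/x_0)) \to 0$, so for $x_0$ small the orbit stays in the domain where \eqref{eqn:normalformnhim} and its power-series right-hand sides are valid. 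Second, a remainder bound for the truncation at order $N$: the discarded terms are $O\bigl(x_0^{(N+1)p}(\ln x_0)^{N+1}\bigr) = o\bigl(x_0^{Np}(\ln x_0)^{m}\bigr)$ for every $m$, so the truncated sums are asymptotic in the required sense, and all constants depend only on $C^k$-bounds for finitely many $\alpha_j, \delta_j$ on the compact $u_0$-neighbourhood, which gives the uniformity. Since \eqref{eqn:normalformnhim} is itself only a $C^\infty$ (formal) normal form, one first truncates it and absorbs the flat tail into this remainder, exactly as in the planar Dulac-map theory of \cite{roussarieBifurcationPlanarVector1998,Dumortier1977}; this is precisely the content imported here from \cite{duignanNormalFormsManifolds}.
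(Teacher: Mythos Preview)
The paper does not prove this proposition at all: it is stated with the citation \cite{duignanNormalFormsManifolds} and imported wholesale from that companion work, so there is no proof in the present paper to compare against. Your proposal therefore stands on its own, and it is essentially correct.

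Your key move---the substitution $w = y\,x^{\rho}$, so that $x^p y^q = w^q$ and the $(w,u)$-system becomes autonomous---is exactly the right one, and it is the natural generalisation of the planar Dulac computation in \cite{roussarieBifurcationPlanarVector1998}. After that, the structure of the argument (Picard iteration in the small parameter $\varepsilon = x_0^{p}$, each step producing a polynomial in $T = -\ln x_0$ with no constant term, hence Dulac series of the stated shape) is sound. One small point worth making explicit in the inductive step: the reason the integrand at each order of $\varepsilon$ is a \emph{polynomial} in $t$ is that the previous iterate is, and the right-hand side of the autonomous system is polynomial in $w$ (and smooth in $u$); this closes the induction you are implicitly running. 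Your observation that products of polynomials with vanishing constant term again have vanishing constant term is what guarantees $P^j_y(u_0;0) = P^j_{u_i}(u_0;0) = 0$ at every order, not just the first.

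You are also right to flag the passage from formal to asymptotic as the genuine analytic content. Your bootstrap bound $|v(T) - \varepsilon| = O(\varepsilon^2 T)$ and $|u(T) - u_0| = O(\varepsilon T)$ is the correct a~priori estimate, and the remainder argument you sketch (truncate the normal form, absorb the flat tail, bound the error by $O\bigl(x_0^{(N+1)p}(\ln x_0)^{N+1}\bigr)$) is precisely how this is handled in the cited reference; indeed the whole point of Proposition~\ref{prop:normalformnhim} is to arrange that only finitely many resonant coefficients $\alpha_l,\delta_l$ with $l \le j$ enter at order $j$, so that the truncation is consistent.
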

		
		In contrast to the quasi-regularity of the hyperbolic transition maps, the regular transition maps $ T^\pm $ are smooth. This can be deduced from the fact that $ V^\pm $ has no singularities and a consequent use of the flow-box theorem. Combining this fact with Remark \ref{rmk:QuasiRegularGroup}, the following theorem is concluded.
		
		\begin{thm}\label{thm:blockMapIsQuasiRegular}
			The block maps $ \pib^{\pm} $ are quasi-regular. More precisely, if $ Z = (\theta,x,h_1,h_2,y) $ then,
			\[ \bar{\pi}^+(\theta,Z) \sim \gamma_0 Z + \sum_{i,j}^{i+j \rho < m} \gamma_{i,j}(Z) \theta^{i + j \rho} + \gamma_m(Z) 
			\theta^m\ln(\theta) + \dots, \]
			with $ \gamma_{i,j}:\R^{n-1}\to\R^{n-1} $ smooth, $ \rho = 1/3 $ the ratio of hyperbolicity of $ \NHIM^+ $ and $ m \geq 1 $ some integer. A similar asymptotic expansion hold for $ \pib^- $ with perhaps different functions $ \gamma_{i,j},\gamma_m $.
		\end{thm}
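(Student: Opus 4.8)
The plan is to assemble $\bar{\pi}^{\pm}$ from the three pieces identified in the geometric sketch of Section \ref{sec:GeometricSketch}, namely $\bar{\pi}^{+} = D_2^{+}\circ T^{+}\circ D_1^{+}$ (and analogously for the minus side), and to track how the quasi-regularity is preserved under these compositions. First I would observe that the two hyperbolic passages $D_1^{\pm}$ and $D_2^{\pm}$ are precisely Dulac maps near the normally hyperbolic manifolds $\NHIM^{+}$ and $\NHIM^{-}$, to which Proposition \ref{prop:asymStructureofD} applies: by Proposition \ref{prop:structureOfCollisionManifold}(iv) the ratio of hyperbolicity is the constant $\rho = 1/3$ at $\NHIM^{+}$ and $\rho = 3$ at $\NHIM^{-}$, both rational, so each Dulac map is quasi-regular with an asymptotic Dulac series in powers of the transverse coordinate with coefficients polynomial in $\ln$ of that coordinate and smooth in the central variables $(x,h_1,h_2,y)$. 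In the blown-up coordinates the relevant transverse coordinate on the section $n^{+} = \Sigma_0$ is the distance from the collision manifold measured along $\NHIM^{+}$; I would fix the notation so that this distance is the variable $\theta$ appearing in the statement (a measure of how far the orbit starts from a collision orbit), and identify $Z = (\theta, x, h_1, h_2, y)$ as local coordinates restricted appropriately.

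Next I would handle the regular transition map $T^{\pm}: \Sigma_1^{\pm}\to\Sigma_2^{\pm}$. Since $\mathcal{V}^{\pm}$ is a tubular neighbourhood of a manifold of heteroclinic orbits and contains no singularities (away from $\mathcal{C}$ the blown-up vector field $X_\theta$ is regular), the flow-box theorem gives that $T^{\pm}$ is a $C^\infty$ diffeomorphism, and moreover it sends the heteroclinic to a heteroclinic, so on the relevant transverse coordinate it has the form $T(x,u) = (A x + O(x^2), \ldots)$ with $A > 0$ and $\partial_x T_1(0,0) > 0$. By Remark \ref{rmk:QuasiRegularGroup} such a diffeomorphism lies in the group $\mathcal{D}$ of quasi-regular homeomorphisms in $x$. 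Then $D_1^{\pm}$, being a Dulac map for $\NHIM^{+}$ (resp. $\NHIM^{-}$), is quasi-regular with leading exponent $\rho$ (resp. $\rho^{-1}$), and similarly $D_2^{\pm}$; each is a quasi-regular homeomorphism since the leading coefficient $P_1^1$ is positive and the linear part $A$ on the central directions is invertible (it is the identity to leading order on $(x,h_1,h_2,y)$ by the $D_u \sim u_0$ part of Proposition \ref{prop:asymStructureofD}). The composition $\bar{\pi}^{+} = D_2^{+}\circ T^{+}\circ D_1^{+}$ therefore lies in $\mathcal{D}$ by the group property in Remark \ref{rmk:QuasiRegularGroup}, hence $\bar{\pi}^{+}$ is quasi-regular; the same argument gives quasi-regularity of $\bar{\pi}^{-}$.

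It remains to extract the explicit form of the asymptotic series, i.e. that the exponents appearing are of the form $i + j\rho$ with $\rho = 1/3$, up to the first logarithmic term at some order $m$. For this I would compose the Dulac series of Proposition \ref{prop:asymStructureofD} symbolically: the incoming passage $D_1^{+}$ produces a series in $\theta^{1/3}$ (since $q = 3$, $p = 1$ for $\rho = 1/3$, the exponents $jp/q$ run through multiples of $1/3$) with polynomial-in-$\ln\theta$ coefficients; applying the smooth map $T^{+}$ (which expands in integer powers of its argument) and then the second Dulac map $D_2^{+}$ (for $\NHIM^{-}$, with $\rho = 3$, contributing only integer-power corrections in its own transverse variable, which after substitution are integer and third-integer powers of $\theta$) yields an overall series whose exponents are nonnegative combinations $i + j\rho$, $i,j \in \N$, $\rho = 1/3$, ordered increasingly; the $\ln\theta$ dependence is polynomial and, crucially, the Dulac-series coefficients $P^j$ have vanishing constant term in $\ln$, so the logarithm first genuinely contributes at some order $m\geq 1$ (determined by the first nonzero $\alpha_j,\delta_j$ in the $\NHIM$ normal forms, which by the normal form of Section \ref{sec:Ckregularisation} is governed by the resonant term $R_h$). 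This bookkeeping is where the real work sits, but it is routine once the pieces are in place.

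The main obstacle I anticipate is not the quasi-regularity per se --- that follows cleanly from the group structure --- but rather the identification of the transverse coordinate. One must check that the single scalar $\theta$ really does parametrise the transverse direction to $\CE^{+}$ on $\Sigma_0$ in a way compatible with the Dulac-map coordinates on both $\NHIM^{+}$ and $\NHIM^{-}$, and that passing from the section near $\NHIM^{+}$ through $T^{+}$ to the section near $\NHIM^{-}$ does not mix the transverse and central variables in a way that destroys the form $i + j\rho$ of the exponents. In particular one needs that $T^{+}$ maps the relevant "fast" coordinate to a multiple of a "fast" coordinate plus higher order --- this is exactly the alignment of $W^{u}(\NHIM^{+})$ with the incoming fast direction of $\NHIM^{-}$ through the heteroclinic, which holds because the heteroclinic connection of Proposition \ref{prop:structureOfCollisionManifold}(iii) is precisely the coincidence of these invariant manifolds. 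Once this geometric alignment is nailed down, the theorem follows by the composition argument above.
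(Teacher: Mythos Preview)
Your proposal is correct and follows essentially the same approach as the paper: decompose $\bar{\pi}^{+}$ as $D_2^{+}\circ T^{+}\circ D_1^{+}$, identify the hyperbolic passages with Dulac maps (quasi-regular by Proposition~\ref{prop:asymStructureofD}), note $T^{+}$ is a diffeomorphism by flow-box, and invoke the group property of Remark~\ref{rmk:QuasiRegularGroup}; the explicit exponent structure then comes from the reciprocal ratios $1/3$ and $3$ at $\NHIM^{+}$ and $\NHIM^{-}$. The only cosmetic difference is that the paper makes explicit the conjugation $D_i = \Phi_\pm^{-1}\circ D_\pm\circ\Phi_\pm$ by the normal-form diffeomorphisms $\Phi_\pm$ before appealing to the group property, whereas you fold this into the Dulac-map identification; your final paragraph on coordinate alignment is additional caution not present in the paper's more terse argument.
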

		\begin{proof}
			The theorem is proved for the transition map $ \pib^+ $ as $ \pib^- $ follows analogously. Recall that $ \pib^+ = D_1^+ \circ T^+ \circ D_2^+ $ with $ D_1^+ : n^+ \to \Sigma_1^+ $ and $ D_2^+ : \Sigma_2^+ \to n^- $ are hyperbolic transitions and $ T^+ $ is the smooth transition map between $ \Sigma_1^+ $ and $ \Sigma_2^+ $. The exact choice of $ \Sigma_1^+ $ and $ \Sigma_2^+ $ will not affect $ \pib^+ $. In particular they can be chosen so that, using the normal form transformation $ \Phi_+ $ near a point $ (0,0,x,h_1,h_2,y)\in\NHIM^+ $, we can decompose $ D_1 = \phi^{-1}\circ D_+\circ\phi $ where $ D_+ $ is the Dulac map near $ \NHIM^+ $. Likewise, $ D_2 = \Phi_- \circ D_-\circ\Phi_-^{-1} $ with $ D_- $ the Dulac map near $ \NHIM^- $ and $ \Phi_- $ the corresponding normal form transformation. The quasi-regularity of $ \pib^+ $ follows immediately from the fact that $ D_+,D_- $ are quasi-regular, $ \Phi_-,\Phi_+,T^+ $ are diffeomorphism (hence also quasi-regular) and from Remark \ref{rmk:QuasiRegularGroup} that the set of quasi-regular functions forms a group under composition.
			
			To get the specific form of the asymptotic series of $ \pib^+ $ given in the theorem, first note that, from Proposition \ref{prop:structureOfCollisionManifold}, the hyperbolic ratio of $ \NHIM^+ $ is $ \rho = 1/3 $ and of $ \NHIM^- $ is $ 1/\rho = 3 $. Using Proposition \ref{prop:asymStructureofD} and the fact the two ratios of hyperbolicity are multiplicative inverses gives the desired form of $ \pib^+ $.
		\end{proof}
		
		Theorem \ref{thm:blockMapIsQuasiRegular} is precisely what we have been searching for; the finite differentiability of the block map! As $ \rho = 1/3 $, the asymptotic series of the block map is of the form $ \theta^{i+j/3} $ and possibly some $ \theta^m\ln \theta $ terms. Moreover, Theorem \ref{thm:blockMapIsQuasiRegular} shows that the finite differentiability is generic. For the map to be smooth, and hence smoothly regularisable, it is required that all the $ \gamma_{i,j} $ and $ \gamma_m $ vanish. That is, finite differentiability should be expected and any smoothness of the block map is remarkable.
		
		\begin{remark}\label{rmk:orderOfResonance}
			It is important to study what mechanisms give rise to the coefficients $ \gamma_{i,j},\gamma_m $. The $ \theta^m\ln \theta $ terms come from the first resonance term $ \alpha_m(u) (x^p y^q)^{m-1} $ or $ \delta_m^k(u) (x^p y^q)^{m} $ in the normal form \eqref{eqn:normalformnhim}. If $ \alpha_{m} $ (resp. $ \delta_m^k $) does not vanish then a term of type $ \gamma_{mp+1}(u)\theta^{mp+1}\ln\theta $ (resp.  $\gamma_{mp}(u)\theta^{mp}\ln\theta $) arises in the Dulac map. This can be seen from Proposition \ref{prop:asymStructureofD}. The value of $ m $ is called the \textit{order of resonance}. It will be shown that this mechanism does not cause the finite differentiability in the collinear $ 4 $-body problem.
			
			On the other hand, the $ \gamma_{i,j}(u) $ coefficients arise from the interaction of the two Dulac maps $ D_1,D_2 $ and the smooth transition $ T $. Ignoring any higher order resonance terms, then from Proposition \ref{prop:asymStructureofD}, the Dulac maps near $ \mathcal{N} $ behave approximately like 
			\[ D_1(\theta,x,y,h_1,h_2) = (\theta^{1/3}, x, y,h_1,h_2),\quad D_2(r,x,y,h_1,h_2) = (r^3, x, y,h_1,h_2). \]
			Now, if the smooth transition has the form, say, $ T(r,x,y,h_1,h_2) = \left(r,x,y,h_1+ a r^j,h_2+ b r^j\right) $, for some $ a,b\in\R $, then the composition of the maps \[ \pib = D_2\circ T \circ D_1(Z) = \left(\theta,x,y,h_1+ a \theta^{j/3},h_2+ b \theta^{j/3}\right), \]
			showing the arrival of a $ \theta^{j/3} $ term. It is this mechanism that will ultimately lead to the finite differentiability of $ \pib $.
		\end{remark}

	\subsection{Asymptotic Expansion of the Block Map}\label{sec:proofOfMainTheorem}
		With the block map realised as quasi-regular and the mechanisms leading to finite differentiability discussed, we are now in a position to determine the precise regularity of the simultaneous binary collision singularities $ \sbc $ for the collinear 4-body problem. We make a particular choice of sections $ \Sigma_0, \Sigma_3 $ transverse to $ \CE^+,\CE^- $ respectively and the intermediate sections $ \Sigma_i^+ $ to explicitly compute the hyperbolic transitions and smooth transition map. Recall the procedure from Section \ref{sec:GeometricSketch}:
		\begin{enumerate}
			\item Take the truncated normal form $ X^9 $ and blow-up and desingularise the set of simultaneous binary collisions to produce $ X_\theta $. From Proposition \ref{prop:structureOfCollisionManifold}, a collision manifold that is two normally hyperbolic invariant manifolds connected by heteroclinics is obtained.
			\item In the blow-up system $ X_\theta $, compute the hyperbolic transition $ D_1^+ $ and $ D_2^+ $ near an arbitrary point of each normally hyperbolic invariant manifold $ \NHIM^+,\NHIM^- $. This is done by first computing the normal form $ X_N $ to a desired order near each $ \NHIM^+, \NHIM^- $ and using Proposition \ref{prop:asymStructureofD}.
			\item In the blow-up system $ X_\theta $, solve the variational equations to a desired order along $ (r,x,h_1,h_2,y) =0 $ to get the smooth transition map from $ \Sigma_1^+ $ to $ \Sigma_2^+ $.
			\item Compose the maps, $ \pib^+ = D_2^+\circ T^+\circ D_1^+ $ to obtain the asymptotic series of the block map $ \pib^+ $. 
		\end{enumerate}
	
		Whilst theoretically this procedure is sound, there are three obstacles faced in carrying out an explicit calculation. The most easily resolved obstacle is with the blow-up method described in Section \ref{sec:CollisionManifold}. Here, polar coordinates in $ (z_1,z_2) $ were introduced to blow-up the set of simultaneous binary collisions. Whilst this certainly achieved the blow-up, it introduced trig functions into the equations. This is at odds with the normal form methods needed to compute the hyperbolic transitions $ D_i^+ $. The normal form procedure is iterative on the homogeneous components of the vector field $ X_\theta $. In order to get the homogeneous components, one needs to Taylor expand the vector field $ X_\theta $; a computation that involves the Taylor expansion of many trig functions. Further, the expansion needs to be done twice at both $ \NHIM^+ $ and $ \NHIM^- $. 
		
		Fortunately, one can circumvent both the introduction of trig functions and the need to do the normal form procedure twice by performing a \textit{directional blow-up} instead of the \textit{polar blow-up} performed in Section \ref{sec:CollisionManifold}. Firstly, it is more convenient to work with $ X^9 $ rotated clockwise by $ \pi/4 $ in the $ (z_1,z_2) $ plane through the transformation $ (\tilde{z}_1,\tilde{z}_2) = \frac{1}{2}(z_1 + z_2, z_1 - z_2) $. This aligns $ \NHIM^+,\NHIM^- $ with $\theta =  \pi,0 $ in the polar blow-up. Applying the rotation yields
		\begin{equation}
			\begin{aligned}[c]
				\tilde{z}_1^\prime	&=  \tilde{z}_1^2 + \tilde{z}_2^2 + \tilde{R}_{6,1}(\tilde{z}_1,\tilde{z}_2)	\\
				\tilde{z}_2^\prime	&=  -2 \tilde{z}_1 \tilde{z}_2 + \tilde{R}_{6,2}(\tilde{z}_1,\tilde{z}_2) 	\\
				x^\prime	&= 0 	
			\end{aligned}
			\qquad
			\begin{aligned}[c]
				h_1^\prime	&= b_c a_1^{-1/3} \tilde{R}_h(\tilde{z}_1,\tilde{z}_2) 	\\
				h_2^\prime	&= -b_c a_2^{-1/3} \tilde{R}_h(\tilde{z}_1,\tilde{z}_2) 	\\
				y^\prime	&= 0
			\end{aligned}
		\end{equation}
		where 
		\begin{equation}
			\begin{aligned}
				\tilde{R}_{6,1}(\tilde{z}_1,\tilde{z}_2) &= \frac{1}{2}\left( R_{6,1}(\tilde{z}_1+\tilde{z}_2,\tilde{z}_1-\tilde{z}_2) + R_{6,2}(\tilde{z}_1+\tilde{z}_2,\tilde{z}_1-\tilde{z}_2) \right) \\
				\tilde{R}_{6,2}(\tilde{z}_1,\tilde{z}_2) &= \frac{1}{2}\left( R_{6,1}(\tilde{z}_1+\tilde{z}_2,\tilde{z}_1-\tilde{z}_2) - R_{6,2}(\tilde{z}_1+\tilde{z}_2,\tilde{z}_1-\tilde{z}_2) \right) \\
				\tilde{R}_h(\tilde{z}_1,\tilde{z}_2) &= R_h(\tilde{z}_1+\tilde{z}_2,\tilde{z}_1-\tilde{z}_2)
			\end{aligned}
		\end{equation}
		
		The directional blow-up is no more difficult, it merely involves taking charts of $ S^1 $ by performing the transformations,
		\begin{equation}\label{eqn:directionalBlowUp}
			\begin{aligned}
				\text{$ \tilde{z}_1 $-direction, } (z_1,z_2) &= (\hat{u},\hat{u}\hat{v}) \\
				\text{$ \tilde{z}_2 $-direction, } (z_1,z_2) &= (\bar{u}\bar{v},\bar{v}),
			\end{aligned}
		\end{equation}
		followed by rescaling $ d\hat{\tau} = \hat{u}d\tau, d\bar{\tau} = \bar{v}d\tau $ to produce the desingularised $ z_1 $ and $ z_2 $ directional blow-ups $ \hat{X},\bar{X} $. 

		The $ \tilde{z}_1 $ and $ \tilde{z}_2 $-directional blow-up, and the order to which we know them after truncating the normal form $ X^9 $ at degree 9, are,
		\begin{equation}\label{eqn:xblowup}
			\begin{aligned}[c]
				\hat{u}^\prime	&=  \hat{u}\left(1+\hat{v}^2\right) + \hat{u}^5\tilde{R}_{6,1}(1,\hat{v}) + O(\hat{u}^9) \\
				\hat{v}^\prime	&=  -3 \hat{v} \left(1+1/3 \hat{v}^2\right) + \hat{u}^4\left(\tilde{R}_{6,1}(1,\hat{v}) - \hat{v}\tilde{R}_{6,2}(1,\hat{v})\right) + O(\hat{u}^9) \\
				x^\prime		&= 0 + O(\hat{u}^9)	\\
			\end{aligned}
			\qquad
			\begin{aligned}	
				h_1^\prime	&= b_c a_1^{-1/3} \hat{u}^8\tilde{R}_h(1,\hat{v})  + O(\hat{u}^9)	\\
				h_2^\prime	&= -b_c a_2^{-1/3}\hat{u}^8\tilde{R}_h(1,\hat{v}) + O(\hat{u}^9)	\\
				y^\prime	&= 0 + O(\hat{u}^9)
			\end{aligned}
		\end{equation}
		and
		\begin{equation}\label{eqn:yblowup}
			\begin{aligned}[c]
				\bar{u}^\prime	&=  1+3\bar{u}^2 + \bar{v}^4\left(\tilde{R}_{6,2}(\bar{u},1) - \bar{u}\tilde{R}_{6,1}(\bar{u},1)\right) + O(\bar{v}^9)	\\
				\bar{v}^\prime	&=  -2 \bar{u}\bar{v} + \bar{v}^5\tilde{R}_{6,2}(\bar{u},1) + O(\bar{v}^9)  \\
				x^\prime		&= 0 + O(\bar{v}^9)	\\	
			\end{aligned}
			\qquad
			\begin{aligned}
				h_1^\prime	&= b_c a_1^{-1/3} \bar{v}^8\tilde{R}_h(\bar{u},1) + O(\bar{v}^9)	\\
				h_2^\prime	&= -b_c a_2^{-1/3} \bar{v}^8\tilde{R}_h(\bar{u},1) + O(\bar{v}^9) 	\\
				y^\prime	&= 0 + O(\bar{v}^9)
			\end{aligned}
		\end{equation}
		respectively. 
		
		As desired, system \eqref{eqn:xblowup} and \eqref{eqn:yblowup} are free of trig functions and polynomial in the variables. The collision manifold $ r = 0 $ from the polar blow-up has become the manifolds $ \hat{u}=0 $ and $ \bar{v}=0 $ in the two charts. Further, the two normally hyperbolic invariant manifolds in the polar blow-up $ X_\theta $ have been reduced to a single normally hyperbolic manifold in the $ \tilde{z}_1 $-chart at $ (\hat{u},\hat{v}) = (0,0) $ which we denote by $ \NHIM $. In the $ \tilde{z}_2 $-chart $ \bar{X} $, the collision manifold $ \bar{v} = 0 $ is free of singularities and the flow is given trivially on it. A projection of the charts into the $ (\tilde{z}_1,\tilde{z}_2) $, $ (\hat{u},\hat{v}) $ and $ (\bar{u},\bar{v}) $ planes is provided in Figure \ref{fig:DirectionalBlowUp}.
		
		\begin{figure}[ht]		
			\centering
			\begin{tikzpicture}
				\begin{axis}[name = plot1,
					scale only axis,
					width = 1/3*\textwidth,
					height =1/3*\textwidth ,
					axis x line=middle,
					axis y line=middle,
					x label style = {anchor=north},
					axis equal,
					xlabel = {$z_1$},
					ylabel = {$z_2$},
					restrict y to domain = -4:2,
					restrict x to domain = -2:2]
					\addplot [domain = 0:0.5, samples = 300]
					({sqrt(0.125/x -x^2)}, {x});
					\addplot [domain = 0:0.5, samples = 300]
					({-sqrt(0.125/x -x^2)}, {x}); 
					\addplot [domain = 0:0.5, samples = 300]
					(1.5+x^2-x^3, {x}) node[above,pos=1]{$ \Sigma_3 $};
					\addplot [domain = 0:0.5, samples = 300]
					(-1.5-x^2+x^3, {x}) node[above,pos=1]{$ \Sigma_0 $};
					\addplot [domain = 0:1, samples = 300]
					({x}, {x}) node[above,pos=1]{$ \Sigma_2^+ $};
					\addplot [domain = 0:-1, samples = 300]
					({x}, {-x})node[above,pos=1]{$ \Sigma_1^+ $};
				\end{axis}
				
				\begin{axis}[name = plot2,at={($(plot1.east)+(2cm,3cm)$)},anchor=outer west,axis x line=middle,
					scale only axis,
					width = 1/3*\textwidth,
					height =1/3*\textwidth ,
					axis y line=middle,
					x label style = {anchor = south},
					axis equal,
					xlabel = {$\overline{u}$},
					ylabel = {$\overline{v}$},
					restrict y to domain = -2:2,
					restrict x to domain = -2:2]
					\addplot [domain = 0:0.5, samples = 300]
					({sqrt(0.125/x^3 -1)}, {x}) node[above,pos=0.9]{$ f $};
					\addplot [domain = 0:0.5, samples = 300]
					({-sqrt(0.125/x^3 -1)}, {x});
					\addplot [domain = 0:0.5, samples = 300]
					(1, {x}) node[above,pos=1]{$ \overline{\Sigma}_2^+ $};
					\addplot [domain = 0:0.5, samples = 300]
					(-1, {x}) node[above,pos=1]{$ \overline{\Sigma}_1^+ $};  
				\end{axis}
				
				\begin{axis}[name = plot3,at={($(plot1.east)+(2cm,-3cm)$)},anchor=outer west,axis x line=middle,
					scale only axis,
					width = 1/3*\textwidth,
					height =1/3*\textwidth ,
					axis y line=middle,
					y label style = {anchor=south},
					axis equal,
					xlabel = {$\hat{u}$},
					ylabel = {$\hat{v}$},
					ymin = -2,ymax=2,
					xmin = -2, xmax = 2]
					\addplot [domain = 0.09:1.5, samples = 300]
					({x}, {0.25/x}) node[below,pos=0.65]{$ D_2^+ $}; 
					\addplot [domain = -0.09:-1.5, samples = 300]
					({x}, {0.25/x}) node[above,pos=0.65]{$ D_1^+ $}; 
					\addplot [domain = 0:0.5, samples = 300]
					(1+x^2-x^3, {x}) node[above,pos=1]{$ \widehat{\Sigma}_3 $};
					\addplot [domain = -0.5:0, samples = 300]
					(-1-x^2-x^3, {x}) node[below,pos=0]{$ \widehat{\Sigma}_0 $};
					\addplot [domain = 0:0.5, samples = 300]
					(x, 1) node[above,pos=1]{$ \widehat{\Sigma}_2^+ $};
					\addplot [domain = -0.5:0, samples = 300]
					(x, -1) node[below,pos=0]{$ \widehat{\Sigma}_1^+ $};
				\end{axis}
				
				\draw[->] ($(plot1.east) + (0.5cm,0.5cm)$) -- ($ (plot2.south west) +(1cm,1cm)  $) node[midway,above]{$ P_{z_2} $};
				\draw[->] ($(plot1.east) + (0.5cm,-0.5cm)$) -- ($ (plot3.north west) + (1cm,-1cm)$) node[midway,above]{$ P_{z_1} $};

			\end{tikzpicture}
			\caption{Intermediate sections and their desingularisations for the upper block map $ \pib_+ $.}\label{fig:DirectionalBlowUp}
		\end{figure}
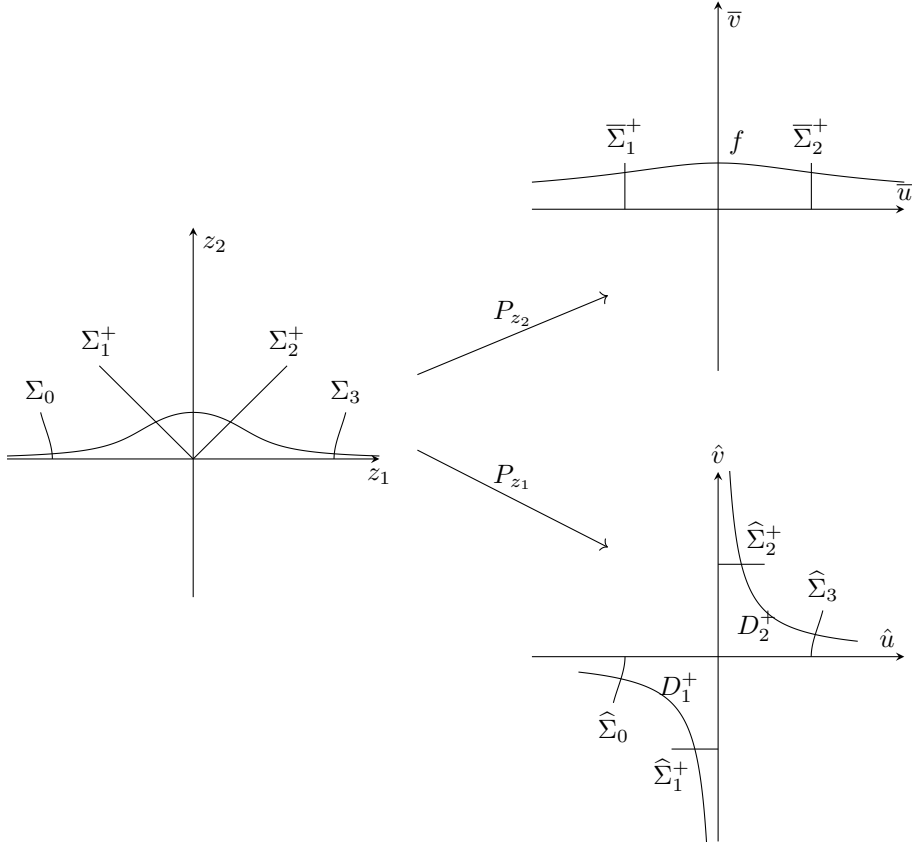
		
		The second obstacle again involves the normal form procedure near the normally hyperbolic manifold. In the computation, it is desired to choose the nicest possible sections $ \Sigma_0, \Sigma_3 $ transverse to $ \CE^+,\CE^- $ respectively. Due to the complicated form of the asymptotic series given in Theorem \ref{thm:blockMapIsQuasiRegular}, it is wise to choose sections that minimise the complexity of $ D_1^+ $ and $ D_2^+ $. To do this, let $ \Phi $ be the normalising transform near the normally hyperbolic invariant manifold $ \NHIM $ and $ P_{\tilde{z}_1},P_{\tilde{z}_2} $ the maps from $ X^9 $ to $ \hat{X},\bar{X} $ respectively. Then choose the sections
		\begin{equation}
			\begin{aligned}
				\Sigma_0 &= P_{\tilde{z}_1}^{-1}\circ\Phi(\sigma_0),\quad \sigma_0 := \{-1\}\times(-\delta,\delta)\times B_\delta(0)^4 \\
				\Sigma_3 &= P_{\tilde{z}_1}^{-1}\circ\Phi(\sigma_3),\quad \sigma_3 := \{1\}\times(-\delta,\delta)\times B_\delta(0)^4 \\
				\Sigma_1^+ &= P_{\tilde{z}_1}^{-1}\circ\Phi(\sigma_1),\quad \sigma_1 := (-\delta,0]\times\{-1\}\times B_\delta(0)^4 \\
				\Sigma_2^+ &= P_{\tilde{z}_1}^{-1}\circ\Phi(\sigma_2),\quad \sigma_2 := [0,\delta)\times\{1\}\times B_\delta(0)^4 \\
			\end{aligned}
		\end{equation}
		with some choice of $ 0 < \delta \leq 1 $ and $ B^4_\delta(0) $ the open ball of radius $ \delta $ in $ \R^4 $.
		Further, denote respectively by $ \hat{\Sigma}_i,\bar{\Sigma}_i $ the images of $ \sigma_i $ in the $ \tilde{z}_1 $ and $ \tilde{z}_2 $ directional charts. See Figure \ref{fig:DirectionalBlowUp} for a depiction of the sections. With these sections, coordinates on all images of each $ \sigma_i $ can be given by the normal form coordinates, between which the hyperbolic transition maps are simply the Dulac map.
		
		The third obstacle still arises when trying to compute the transition $ T^+ $ between the normally hyperbolic sectors. The transition is between $ \sigma_1 $ and $ \sigma_2 $ in the normal form $ X_N $. In order to calculate the transition $ T $, coordinates in $ X_N $ need to be transformed to coordinates in $ \bar{X} $. Here lies the problem; at any iteration in its computation, $ \Phi $ is only known up to some truncated order. Consequently, the image of $ \sigma_i $ in $ \bar{X} $ will only be known to some truncated order.  The key to clearing this obstacle is to observe that the block map $ \pib^+ $ is independent of the choice of intermediate sections $ \Sigma_1^+ $ and $ \Sigma_2^+ $. Hence, there is freedom in the choice of these sections. In particular, sections in $ X_N $ can be chosen by
		\begin{equation}
			\begin{aligned}
				\sigma_1^\nu &:= (-\delta,0]\times\{-\nu\}\times B_\delta(0)^4 \\
				\sigma_2^\nu &:= [0,\delta)\times\{\nu\}\times B_\delta(0)^4 \\
			\end{aligned}
		\end{equation}
		for $ 0 < \nu \ll 1 $ and $ \bar{\Sigma}_i^\nu,\hat{\Sigma}_i^\nu,\Sigma_i^\nu $ the image of $ \sigma_i $ in $ \bar{X},\hat{X},X $ respectively. As the final series does not depend on this choice, then it must be that the series does not depend on $ \nu $, and inevitably, the limit $ \nu \to 0 $ can be taken. Essentially, $ \nu $ is a book keeping measure to ensure the images of $ \sigma_i $ are known to sufficiently high order.

		With the obstacles adequately navigated, we are ready to proceed with the calculation of $ \pib^+ $. 
		\subsubsection{Computing the hyperbolic transitions}
			Take $ (0,0,x^*,h_{1}^*,h_{2}^*,y^*) \in \NHIM $ in the $ \tilde{z}_1 $-directional blow-up $ \hat{X} $. We wish to compute the hyperbolic transition maps $ D_1^\nu, D_2^\nu $ in a neighbourhood of this point. From Proposition \ref{prop:asymStructureofD} and Proposition \ref{prop:normalformnhim} this can be done by first computing the normal form $ X_N $ near $ \NHIM $.
			
			It is possible to iteratively compute the normal form of $ \hat{X} $ through the cohomological equations order by order. However, this can be avoided by using the approximate integral $ \kappa $ computed in \eqref{eqn:kappaOriginal}. First, writing $ \kappa $ in the rotated $ \tilde{z}_1,\tilde{z}_2 $ coordinates,
			\begin{equation}\label{eqn:tildekappa}
				\begin{aligned}
					\tilde{\kappa}(\tilde{z}_1,\tilde{z}_2) &= \frac{1}{3} \tilde{z}_2(3\tilde{z}_1^2 + \tilde{z}_2^3) + (h_1+h_1^*)^2 \tilde{\kappa}_7(\tilde{z}_1,\tilde{z}_2) - (h_2+h_2^*)^2 \tilde{\kappa}_7(\tilde{z}_1,-\tilde{z}_2), \\
					\tilde{\kappa}_7(\tilde{z}_1,\tilde{z}_2)	&= \kappa_7(\tilde{z}_1+\tilde{z}_2,\tilde{z}_1-\tilde{z}_2).
				\end{aligned}
			\end{equation}
			Projecting $ \tilde{\kappa} $ into the $ \tilde{z}_1 $-directional chart induces an integral to order $ O(\hat{u}^8) $ in $ \hat{X} $ near the collision manifold $ \hat{u} = 0 $,
			\begin{equation}
				\tilde{\kappa} = \hat{u}^3 \left(\hat{v} + \frac{1}{3}\hat{v}^3\right) + \hat{u}^7 \left( (h_1+h_1^*)^2 \tilde{\kappa}_7(1,\hat{v}) - (h_2+h_2^*)^2 \tilde{\kappa}_7(1,-\hat{v}) \right) + O(\hat{u}^8) .
			\end{equation}
			
			As $ \hat{u} = 0 $ is invariant it is possible to write  $ \hat{u}^\prime = \hat{u} G(\hat{u},\hat{v}) $ for some smooth function $ G $. Looking at the form of $ \hat{u}^\prime $ in \eqref{eqn:xblowup} we have that,
			\[ G(\hat{u},\hat{v}) = \left(1+\hat{v}^2\right) + \hat{u}^4\tilde{R}_{6,1}(1,\hat{v}) + O(\hat{u}^8). \]
			The normal form to sufficiently higher order can then be computed by making the smooth time rescaling 
			\[ d\tau = G(\hat{u},\hat{v})^{-1} d\tilde{\tau}, \]
			and introducing $ u,v,h_i $ as the normal form coordinates through the near identity transformation $ \Phi $,
			\begin{equation}\label{eqn:normalFormCoords}
				\Phi:\quad u = \hat{u},\quad v = \hat{u}^{-3} \tilde{\kappa}(\hat{u},\hat{v},h_1,h_2),\quad h_1 = h_1 - u^8 b_c \frac{216 \hat{v} }{95 a_1^{1/3} },\quad h_2 = h_2 + u^8 b_c \frac{216 \hat{v} }{95 a_2^{1/3} }.
			\end{equation}
			
			Noting that $ \frac{d}{d\tilde{\tau}}\tilde{\kappa}= 0 + O(u^8) $, the normal form $ X_N $ near $ \NHIM $ is given by,
			\begin{equation}
				\begin{aligned}
					u^\prime &= u + O(u^9) \\
					v^\prime &= -3 v + O(u^8) \\
					x^\prime&= 0 + O(u^9)
				\end{aligned}
				\quad
				\begin{aligned}
				h_1^\prime &= h_1 + O(u^8,v^2) \\
				h_2^\prime &= h_2 + O(u^8,v^2) \\
				y^\prime&= 0 + O(u^9),
				\end{aligned}
			\end{equation}
			with $ {}^\prime $ denoting derivative with respect to $ \tilde{\tau} $.

			The truncated normal form $ X_N $, is remarkably simple; it is merely the leading order terms of $ \hat{X} $. This truncation admits $ x,y,h_1,h_2 $ and  $ \tilde{\kappa} = u^3 v $ as integrals. From these integrals the hyperbolic transitions  \( D_1^\nu:(-1,-v,x,h_{1},h_{2},y) \mapsto (-u,-\nu,x,h_{1},h_{2},y) \text{ and } D_2^\nu:(u,\nu,x,h_{1},h_{2},y)\mapsto(1,v,x,h_1,h_2,y) \) are easily computed. What needs to be determined is the order to which $ D_1 $ and $ D_2 $ is known if the normal form is truncated at order $ 9 $. From Remark \ref{rmk:orderOfResonance} any resonant monomial with non-vanishing coefficient appearing in the normal form will produce terms of type $ u^{mp}\ln(u) $, where $ p=1 $ for $ D_1^\nu $ and $ p=3 $ for $ D_2^\nu $ (because the ratio of hyperbolicity is $ 1/3 $ and $ 3 $ respectively). Now, there are no resonance terms appearing to order $ 8 $ in $ X_N $ and hence we can conclude there are no terms in $ D_1^\nu $ of the from $ v \ln v , v^2 \ln v $, and no terms of the form $ u^3 \ln u, u^6\ln u $ in $ D_2^\nu $.
			
			It follows that the hyperbolic transitions are simply
			\begin{equation}
				\begin{aligned}
						D_1^\nu(v,x,h_1,h_2,y) &= (\nu^{-1/3}v^{1/3},x,h_1,h_2,y) + O(v^{3}\ln v ),\\ D_2^\nu(u,x,h_1,h_2,y) &= (\nu u^3 ,x,h_1,h_2,y) + O(u^9 \ln u).
				\end{aligned}
			\end{equation} 
			
		\subsubsection{Smooth Transition Map}
			We will use the $ \tilde{z}_2 $-directional blow-up, system \eqref{eqn:yblowup}, to compute the smooth transition $ T^+:\sigma_1^\nu\to\sigma_2^\nu $. Recall that,
			\[ \sigma_1^\nu := \{ v = -\nu \},\qquad \sigma_2^\nu := \{ v = \nu \}, \]
			where $ v $ is the normal form coordinate of $ X_N $. The idea is to compute $ T^+ $ by considering $ T^+:\bar{\Sigma}_1^\nu \to \bar{\Sigma}_2^\nu $ with $ \bar{\Sigma}_i^\nu $ the images of $ P_{\tilde{z}_2}\circ P_{\tilde{z}_1}^{-1} \circ \Phi^{-1}(\sigma_i) $ parameterised by the normal form coordinates. The transition $ T^+ $ will be computed using the variational equations. One can compute and solve the variational equations using the coordinates $ \bar{u},\bar{v} $. However, by making use of the approximate integral $ \tilde{\kappa} $, the task becomes much more manageable.
			
			Writing $ \tilde{\kappa} $ in the $ \tilde{z}_2 $-directional chart yields,
			\begin{equation}
				\tilde{\kappa} = (\frac{1}{3} + \bar{u}^2) \bar{v}^3 + \bar{v}^7\left((h_1+h_1^*)^2 \tilde{\kappa}_7(\bar{u},1) - (h_2 + h_2^*)^2 \tilde{\kappa}_7( \bar{u}, -1) \right) + O(\bar{v}^8),
			\end{equation}
			with $ \frac{d}{d\tau}\tilde{\kappa} = 0 + O(\bar{v}^7) $. Hence, by replacing $ \bar{v} $ by the new coordinate $ w $ through the diffeomorphism 
			\[ w = \tilde{\kappa}(\bar{u},\bar{v},h_1,h_2)^{\frac{1}{3}}, \]
			and noting that $ \bar{u}^\prime > 0 $ for $ \bar{v} $ sufficiently small, system \eqref{eqn:yblowup} is transformed to the non-autonomous system,
			\begin{equation}\label{eqn:nonAutonomousdbu}
				\begin{aligned}[c]
					\frac{d w}{d \bar{u}}	&= 0 + O(w^7)  \\
					\frac{d x}{d \bar{u}}	&= 0 + O(w^9)	\\	
				\end{aligned}
				\qquad
				\begin{aligned}
					\frac{d h_1}{d \bar{u}}	&= 3^{8/3} b_c a_1^{-1/3} \left( 1 + 3\bar{u}^2 \right)^{-11/3} \tilde{R}_h(\bar{u},1) w^8 + O(w^9)	\\
					\frac{d h_2}{d \bar{u}}	&= -3^{8/3}  b_c a_2^{-1/3} \left( 1 + 3 \bar{u}^2 \right)^{-11/3} \tilde{R}_h(\bar{u},1) w^8 + O(w^9) 	\\
					\frac{d y}{d \bar{u}}	&= 0 + O(w^9)
				\end{aligned}
			\end{equation} 
			
			System \eqref{eqn:nonAutonomousdbu} has an explicit solutions on the collision manifold given by 
			\[ (w,x,h_1,h_2,y) = (0,\bar{x}_0,\bar{h}_{10},\bar{h}_{20},\bar{y}_0), \]
			for each choice of $ \pmb{x}_0 = (\bar{x}_0,\bar{h}_{10},\bar{h}_{20},\bar{y}_0) \in \R^4 $. We seek a variation of this solutions in the $ w $ direction, that is, we want to compute the variation,
			\begin{equation}\label{eqn:varEquations}
				\begin{aligned}[c]
					w(w_0,\pmb{x}_0,\bar{u})		&= w^{(1)}(\pmb{x}_0;\bar{u})w_0 + \sum w^{(j)}(\pmb{x}_0;\bar{u})w_0^j \\
					x(w_0,\pmb{x}_0,\bar{u})		&= \bar{x}_0 + \sum x^{(j)}(\pmb{x}_0; \bar{u})w_0^j 	\\	
					h_1(w_0,\pmb{x}_0,\bar{u})		&= \bar{h}_{10} + \sum h_1^{(j)}(\pmb{x}_0; \bar{u})w_0^j 	\\
					h_2(w_0,\pmb{x}_0,\bar{u})		&= \bar{h}_{20} + \sum h_2^{(j)}(\pmb{x}_0; \bar{u})w_0^j  	\\
					y(w_0,\pmb{x}_0,\bar{u})		&= \bar{y}_0 + \sum W^{(j)}(\pmb{x}_0; \bar{u})w_0^j 
				\end{aligned}
			\end{equation}
			with $ w^{(1)}(\pmb{x}_0, 0 ) = 1 $ and otherwise $ \eta^{(j)}(\pmb{x}_0, 0 ) = 0, \eta = w,x,h_1,h_2,y $, so that at $ \bar{u} = 0 $ the variation has the initial conditions $ (w,x,h_1,h_2,y) = (w_0,\bar{x}_0,\bar{h}_{10},\bar{h}_{20},\bar{y}_0) $. The coefficient functions $ \eta^{(j)} $ can be computed using the variational equations. These equations are derived by differentiating both sides of \eqref{eqn:varEquations} by $ d/d\bar{u} $, replacing the lhs by the non autonomous system \eqref{eqn:nonAutonomousdbu} and substituting the variables $ (w,x,h_1,h_2,y) $ with their variations. The coefficients of $ w_0^j $ are then equated to get a linear, non-autonomous system in $ \eta^{(j)} $ called the $ j^{th} $ order variational equations.
			
			Due to the absence of lower order $ w $ terms in \eqref{eqn:nonAutonomousdbu}, it is immediate that,
			\begin{align*}
				w^{(1)}(\pmb{x}_0;\bar{u})		&= 1 ,\qquad w^{(j)}(\pmb{x}_0;\bar{u}) = 0,\quad  j=2,\dots,6, \\
				x^{(j)}(\pmb{x}_0;\bar{u}) 		&= 	y^{(j)}(\pmb{x}_0;\bar{u}) = 0,\quad j=1,\dots,8 \\
				h_1^{(j)}(\pmb{x}_0;\bar{u})	&=h_2^{(j)}(\pmb{x}_0;\bar{u}) = 0,\quad j=1,\dots,7 
			\end{align*}
			Moreover, the $ 8^{th} $ variation of $ h_i $ is given by 
			\[ h_1^{(8)} = b_c a_1^{-1/3} \bar{H}^{(8)}(\bar{u}),\qquad h_2^{(8)} = -b_c a_2^{-1/3} \bar{H}^{(8)}(\bar{u}),  \]
			where,
			\begin{align*}
				\bar{H}^{(8)}(\bar{u})	&= 3^{8/3} \int_{0}^{\bar{u}} \left( 1 + 3 u^2 \right)^{-11/3} \tilde{R}_h(u,1) du \\
					&= -\frac{72}{95} 3^{2/3} \bar{u} \left(\frac{9 \left(\bar{u}^4+2 \bar{u}^2-3\right)}{\left(3 \bar{u}^2+1\right)^{5/3}}-38 \, _2F_1\left(\frac{1}{2},\frac{2}{3};\frac{3}{2};-3
					\bar{u}^2\right)\right),
			\end{align*}
			and $ \, _2F_1  $ is the hypergeometric function.
			
			In summary, the variation is computed as,
			\begin{equation}\label{eqn:varsol}
				\begin{aligned}
					w 	&=  w_0 + O(w_0^7) \\
					x 	&= \bar{x}_0 + O(w_0^9) \\
					h_1	&= \bar{h}_{10} + b_c a_1^{-1/3} \bar{H}^{(8)}(\bar{u}) w_0^8 + O(w_0^9)\\
					h_2	&= \bar{h}_{20} - b_c a_2^{-1/3} \bar{H}^{(8)}(\bar{u}) w_0^8 + O(w_0^9) \\
					y	&= \bar{y}_0 + O(w_0^9).				
				\end{aligned}
			\end{equation}
			
			One can think of the variation \eqref{eqn:varsol} as the flow $ \psi_{\bar{u}}(w_0,\pmb{x}_0) $ of the non-autonomous system \eqref{eqn:nonAutonomousdbu} up to some order in $ w_0 $. With this view, a method for computing the smooth transition $ T^+:\bar{\Sigma}_1^\nu \to \bar{\Sigma}_2^\nu $ becomes apparent. Let $ \pmb{w}_i = (w_i,\bar{x}_i,\bar{h}_{1i},\bar{h}_{2i},\bar{y}_i) $ be the coordinates on $ \bar{\Sigma}_i^\nu $. Then there exists $ \bar{u}_i = \bar{u}_i(\pmb{w}_i) $ such that,
			\( \pmb{w}_1=\psi_{\bar{u}_1}(w_0,\pmb{x}_0), \pmb{w}_2=\psi_{\bar{u}_2}(w_0,\pmb{x}_0). \)
			The transition $ T^+ $ in these coordinates is hence computed as, 
			\[ \pmb{w}_2 = \psi_{\bar{u}_2} \circ \psi_{-\bar{u}_1}(\pmb{w}_1). \]
			This computation yields,
			\begin{equation}\label{eqn:psisol}
				\begin{aligned}
					w_2 			&=  w_1 + O(w_1^7) \\
					\bar{x}_1 		&= \bar{x}_1 + O(w_1^9) \\
					\bar{h}_{12}	&= \bar{h}_{11} + b_c a_1^{-1/3} \left(\bar{H}^{(8)}(\bar{u}_1) - \bar{H}^{(8)}(\bar{u}_2)\right) w_1^8 + O(w_1^9)\\
					\bar{h}_{22}	&= \bar{h}_{21} - b_c a_2^{-1/3} \left(\bar{H}^{(8)}(\bar{u}_1) - \bar{H}^{(8)}(\bar{u}_2)\right) w_1^8 + O(w_1^9) \\
					\bar{y}			&= \bar{y}_1 + O(w_1^9).				
				\end{aligned}
			\end{equation}
			
			The transition $ T^+:\sigma_1^\nu \to \sigma_2^\nu $ will follow after replacing $ \pmb{w}_1,\pmb{w}_2 $ in \eqref{eqn:psisol} by their respective parameterisation through $ P_{\tilde{z}_2}\circ P_{\tilde{z}_1}^{-1} \circ \Phi^{-1}(\sigma_i) $. Let $ (-u_1,-\nu,x_1,h_{11},h_{21},y_1),(u_2,\nu,x_2,h_{12},h_{22},y_2) $ be the normal form coordinates on $ \sigma_1,\sigma_2 $ respectively. Using the fact that $ w = \tilde{\kappa}^{1/3} = u v^{1/3} $ and substituting into \eqref{eqn:psisol} it follows that
			\[ u_2 = u_1 + O(u_1^7). \]
			It is also evident $ x_2 = x_1 + O(u_1^9),\, y_2 = y_1 + O(u_1^9) $. To get the $ h $ transitions, we need to explicitly compute the parameterisation. The first step is to find the inverse of $ \Phi $ from its definition in \eqref{eqn:normalFormCoords}. We have,
			\[ -\nu = \frac{1}{3}\hat{v}_1(3+ \hat{v}_1^2) + O(\hat{u}_1^4) \implies \hat{v}_1 \sim -\nu + O(\nu^3,u^4). \]
			Then, using that $ (\bar{u},\bar{v}) = P_{\tilde{z}_2}\circ P_{\tilde{z}_1}^{-1} (\hat{u},\hat{v}) = (\hat{v}^{-1}, \hat{u}\hat{v}) $ we obtain
			\[ \bar{u}_1(\nu) = (-\nu)^{-1} + O(\nu^2, u_1^4),\qquad \bar{u}_2(\nu) = (\nu)^{-1} + O(\nu^2, u_1^4). \]
			Again using the normal form coordinates \eqref{eqn:normalFormCoords}, the $ h_1 $ parameterisations are computed as
			\[ \bar{h}_{11} = h_{11} + u_1^8 b_c \frac{216 \nu }{95 a_1^{1/3} } + O(\nu^2,u_1^9),\qquad \bar{h}_{12} = h_{12} - u_1^8 b_c \frac{216 \nu }{95 a_1^{1/3} } + O(\nu^2,u_1^9) . \]
			A similar expression is obtained for $ \bar{h}_{21},\bar{h}_{22} $. Finally, substituting each parameterisation into \eqref{eqn:psisol} the transition map $ T^+:\sigma_1^\nu\to\sigma_2^\nu $ is explicitly computed as
			\begin{equation}
				\begin{aligned}
					u_2 		&=  u_1 + O(u_1^7) \\
					x_2 		&=  x_1 + O(u_1^9) \\
					h_{12}		&= 	h_{11} + b_c a_1^{-1/3} H^8(\nu) u_1^8 + O(u_1^9)\\
					h_{22}		&=  h_{21} - b_c a_2^{-1/3} H^8(\nu) u_1^8 + O(u_1^9) \\
					y_2			&= 	y_1 + O(u_1^9)
				\end{aligned}
			\end{equation}
			where
			\[ H^8(\nu) = \frac{432}{95} \nu + \left(\bar{H}^{(8)}(-\nu^{-1}) - \bar{H}^{(8)}(\nu^{-1})\right) + O(\nu^3) = -24 \cdot 3^{1/6} \sqrt{\pi} \frac{\Gamma\left( -5/6 \right)}{\Gamma\left(2/3\right)} \nu^{8/3} + O(\nu^3). \]
			
		\subsubsection{Gluing Together}
		At last we are in a position to give the asymptotic expansion of the block map $ \pib^+ $. Composing the maps $ \pib^+_\nu = D_2^\nu\circ T^+_\nu D_1^\nu $ and taking the limit $ \nu \to 0 $ gives the result:
		\begin{align*}
			\pib^+_\nu &= D_2^\nu\circ T^+_\nu\circ D_1^\nu(v,x,h_1,h_2,y) \\ 
							&= D_2^\nu \circ T^+_\nu \left(\left(\nu^{-1/3}v^{1/3},x,h_1,h_2,y\right) + O(v^3\ln v)\right) \\
							&= D_2^\nu \left(\nu^{-1/3} v^{1/3}, x, h_1 + \tilde{b}_c a_1^{-1/3} v^{8/3},h_2 + \tilde{b}_c a_2^{-1/3} v^{8/3} , y)+O\left(\nu^{1/3},v^3\ln v\right)\right) \\
							&= \left(\nu\left(\nu^{-1/3} v^{1/3}\right)^3, x, h_1 + \tilde{b}_c a_1^{-1/3} v^{8/3},h_2 + \tilde{b}_c a_2^{-1/3} v^{8/3} , y)+O\left(\nu^{1/3},v^{3}\ln v\right)\right) \\
			\lim_{\nu\to 0} \pib^+_\nu &= \left(v,x, h_1 + \tilde{b}_c a_1^{-1/3} v^{8/3},h_2 + \tilde{b}_c a_2^{-1/3} v^{8/3} , y\right) + O(v^3\ln v),
		\end{align*}
		where $ \tilde{b}_c = -24 b_c \cdot 3^{1/6} \sqrt{\pi} \frac{\Gamma\left( -5/6 \right)}{\Gamma\left(2/3\right)} $.
		With this calculation, and noting that $ \tilde{b}_c $ is a strictly positive function of the masses, we have shown the main theorem.
		
		\begin{thm}\label{thm:C83Regularisable}
			For any choice of masses, the simultaneous binary collision is precisely $ C^{8/3} $-regularisable in the collinear 4-body problem.
		\end{thm}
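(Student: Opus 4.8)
The computation of $\pib^+$ carried out above has already delivered, in suitable coordinates, the key identity
\[
\lim_{\nu\to0}\pib^+_\nu(v,x,h_1,h_2,y) = \bigl(v,\ x,\ h_1+\tilde b_c\, a_1^{-1/3}\, v^{8/3},\ h_2+\tilde b_c\, a_2^{-1/3}\, v^{8/3},\ y\bigr)+O(v^3\ln v),
\]
with $\tilde b_c$ a strictly positive function of the masses proportional to $b_c$. The plan is to obtain the analogous expansion of the lower block map $\pib^-$, to glue $\pib^+$ and $\pib^-$ along $\CE=\CE^+\cup\CE^-$ exactly as in the proof of Theorem~\ref{thm:C0Regularisable}, and then to read off the regularity of the resulting block map $\pib:\Sigma_0\to\Sigma_3$ directly from the two one-sided series.

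For $\pib^-$ I would repeat the construction of the previous subsections using the other heteroclinic tube $\mathcal{V}^-$. The structure is identical — two hyperbolic Dulac passages glued by a smooth transition map — and the same structural facts apply: near each of $\NHIM^\pm$ the normal form carries no resonant monomials below order $8$ (Proposition~\ref{prop:normalformforSBC} together with the normal-form calculation of the previous subsection), and the smooth transition first perturbs the $h_i$-components at order $8$, since the variations $h_i^{(j)}$ vanish for $j\le 7$ thanks to the integral $\kappa$. Consequently $\pib^-$ has an asymptotic expansion of the same shape as $\pib^+$: it coincides with the identity through order $v^{8/3}$ in every component, then carries a term of order $v^{8/3}$ in the $h_i$-slot with a factor $b_c$ — again traceable to the coupled monomial $z_1^4 z_2^4$ of $K$ — and a remainder which, by Theorem~\ref{thm:blockMapIsQuasiRegular}, is the tail of a quasi-regular (Dulac) series all of whose terms are of order $v^3,\,v^{10/3},\,v^{11/3},\dots$, possibly with logarithms, each of which is $C^{8/3}$ or smoother.

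Now glue $\pib^+$ and $\pib^-$: the coordinate $v$ on $\Sigma_0$ is a signed measure of distance from $\CE^+\cap\Sigma_0=\{v=0\}$, with $\pib^\pm$ governing the two sides. On $\{v=0\}$ both pieces reduce to the identity (the $C^0$ statement), and since $v^{8/3}$ and $|v|^{8/3}$ have vanishing derivatives of orders $0,1,2$ at the origin, the one-sided $2$-jets of $\pib^+$ and $\pib^-$ agree across $\CE^+$: the glued map is $C^2$. Its first further term is $\tilde b_c\, a_i^{-1/3}\,v^{8/3}$ in the $h_i$-slot (and its $|v|^{8/3}$ counterpart on the other side); as $8/3\notin\N$ this term destroys $C^3$, and as $\tilde b_c>0$ for every choice of masses no cancellation against $\pib^-$ is possible. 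The remaining contributions are $O(v^3\ln v)=o(v^{8/3})$ and are themselves $C^{8/3}$, so they neither spoil $C^{8/3}$ nor restore $C^3$. Away from $\CE$ the block map is a smooth flow map, so the worst regularity, attained exactly on $\CE^+\cap\Sigma_0$, is $C^{8/3}$; hence $\sbc$ is precisely $C^{8/3}$-regularisable. As foreseen in Remark~\ref{rmk:canIgnoreR6}, the mass-independent kinetic terms $R_{6,j}$, carried through the whole computation via the integral $\kappa$, never enter the $v^{8/3}$ term; the loss of differentiability is therefore entirely attributable to the order-$8$ obstruction to foliating a neighbourhood of $\sbc$ by invariant normal $2$-planes established in Proposition~\ref{prop:normalformforSBC} and Lemma~\ref{lem:noFoliatation}.

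The step I expect to be the genuine obstacle is not any single estimate but the bookkeeping needed to be certain that nothing of order $\le v^{8/3}$ is omitted in the $h_i$-components of either $\pib^+$ or $\pib^-$: one must verify that the Dulac maps near $\NHIM^\pm$ contribute no $h_i$-terms up to order $v^{8/3}$ (this uses the absence of resonant monomials below order $8$ in the normal form near $\NHIM$), that the smooth transition first affects the $h_i$ at order $v^{8/3}$ (this uses the vanishing of the variations $h_i^{(j)}$ for $j\le 7$, a consequence of the integral $\kappa$), and that the auxiliary parameter $\nu$ may be sent to $0$ cleanly so that the intermediate sections drop out. Once these are in hand, comparison of the two one-sided expansions and the conclusion are immediate.
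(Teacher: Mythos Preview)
Your proposal is correct and follows exactly the paper's route: the preceding computation of the asymptotic series of $\pib^+$, together with the strict positivity of $\tilde b_c$ for all masses, is what the paper invokes to declare Theorem~\ref{thm:C83Regularisable} proved. Your write-up is in fact more scrupulous than the paper's own one-line proof, since you make explicit the symmetric treatment of $\pib^-$ and the matching of one-sided $2$-jets across $\CE^+$ that the paper leaves implicit.
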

		
\section{Concluding Remarks}
	A new proof of the $ C^{8/3} $-regularity of simultaneous binary collisions in the collinear $ 4 $-body problem has been given. In the process, new results about the problem have been shown. The $ C^{8/3} $ differentiability is now known to hold for any choice of masses and for almost all directional derivatives of $ \pib $. The exception is when $ v = 0 $, that is, a derivative is taken in the direction along the set of collision orbits $ \CE^+ $. This last result was known to Elbialy \cite{Elbialy1993planar}. Moreover, a heuristic of Martinez and Sim\'{o} on the crucial role of the first coupling term $ b_c z_1^4 z_2^4 $  was proved by explicitly computing the asymptotics of the block map.
	
	We sought a more geometric proof to that in \cite{Martinez1999}. It is now clear that is not possible to construct a set of integrals local to the set of singularities. Another geometric notion essential to the proof was an investigation of hyperbolic transitions near normally hyperbolic manifolds of fixed points. From this work, the following can now be concluded about the mysterious $ 8/3 $ differentiability:
	\begin{itemize}
		\item The $ 1/3 $ results from the ratio of hyperbolicity of the two normally hyperbolic manifolds $ \NHIM^+,\NHIM^- $.
		\item The $ 8 $ results from the inability to construct an invariant foliation normal to the set of simultaneous binary collisions $ \sbc $ at order $ 8 $ in the intrinsic energies $ h_i $.
	\end{itemize}

	A physical interpretation of the non-smoothness is as follows.
	Near the simultaneous binary collision the energy of the individual binaries before and after collision is a non-smooth function of a measure of their difference in coordinates. Expressing the final variable $v$ on the section $u = \pm 1$ in terms of the original variables $Q_i$ gives to leading order 
	\[
		v = t_1^{1/3} - t_2^{1/3}, \quad
		t_i^2 = Q_i^3 \frac{ M_i }{ k_i } \,.
	\]
	Here $ t_i $ is just an abbreviation, but it has units of time and can be considered as the leading order term in the solution $ Q_i(t_i) $ near collision $ Q_i = 0 $.	
	
	Whilst the explanation of the finite differentiability given is certainly succinct, the computation required to prove the theorem is overly cumbersome. Mathematica was essential in computing the several normal forms and the variational equations. However, the approach is very general and can theoretically be extended to other problems, for example, the planar problem or $ n > 4 $.
	
	Future work would do well to investigate the connection between the inability to foliate and the finite differentiability of the block map. Perhaps by investigating regularisation near arbitrary manifolds of fixed points, a deeper theory could be formulated, making the large computations of this paper an unnecessary burden.

	\bibliography{SBC_Collinear}

\begin{thebibliography}{10}

\bibitem{belbrunoSimultaneousDoubleCollision1984}
Edward~A. Belbruno.
\newblock On simultaneous double collision in the collinear four-body problem.
\newblock {\em Journal of Differential Equations}, 52(3):415--431, May 1984.

\bibitem{belitskii1979invariant}
Genrikh~R. Belitskii.
\newblock Invariant normal forms of formal series.
\newblock {\em Functional Analysis and Its Applications}, 13(1):46--47, 1979.

\bibitem{belitskii2002c}
Genrikh~R. Belitskii.
\newblock {$ C^\infty $}-normal forms of local vector fields.
\newblock {\em Acta Applicandae Mathematicae}, 70(1):23--41, 2002.

\bibitem{Conley1971}
Charles Conley and Robert Easton.
\newblock Isolated invariant sets and isolating blocks.
\newblock {\em Transactions of the American Mathematical Society},
  158(1):35--61, 1971.

\bibitem{duignanNormalFormsManifolds}
Nathan Duignan.
\newblock Normal forms for manifolds of normally hyperbolic singularities and
  asymptotic properties of nearby transitions.
\newblock to appear.

\bibitem{duignanRegularisationPlanarVector2019}
Nathan Duignan and Holger~R. Dullin.
\newblock Regularisation for planar vector fields.
\newblock {\em Nonlinearity}, 2019, arXiv:1901.08701.
\newblock in press.

\bibitem{Dumortier1977}
Freddy Dumortier.
\newblock Singularities of vector fields on the plane.
\newblock {\em Journal of Differential Equations}, 23(1):53--106, 1977.

\bibitem{Easton1971}
Robert Easton.
\newblock Regularization of vector fields by surgery.
\newblock {\em Journal of Differential Equations}, 10(1):92--99, 1971.

\bibitem{Easton1972}
Robert Easton.
\newblock The topology of the regularized integral surfaces of the 3-body
  problem.
\newblock {\em Journal of Differential Equations}, 12(2):361--384, 1972.

\bibitem{eisenbud2006geometry}
David Eisenbud and Joe Harris.
\newblock {\em The Geometry of Schemes}, volume 197.
\newblock {Springer-Verlag, New York}, 2000.

\bibitem{ElBialy1990}
Mohamed~Sami ElBialy.
\newblock Collision singularities in celestial mechanics.
\newblock {\em SIAM Journal on Mathematical Analysis}, 21(6):1563--1593, 1990.

\bibitem{Elbialy1993planar}
Mohamed~Sami ElBialy.
\newblock On simultaneous binary collisions in the planar n-body problem.
\newblock {\em Zeitschrift f{\"u}r angewandte Mathematik und Physik ZAMP},
  44(5):880--890, 1993.

\bibitem{Elbialy1993collinear}
Mohamed~Sami ElBialy.
\newblock Simultaneous binary collisions in the collinear {{N}}-body problem.
\newblock {\em Journal of Differential Equations}, 102(2):209--235, 1993.

\bibitem{elbialyCollisionEjectionManifold1996}
Mohamed~Sami ElBialy.
\newblock Collision\textendash{{Ejection Manifold}} and {{Collective Analytic
  Continuation}} of {{Simultaneous Binary Collisions}} in the {{Planar
  N}}-{{Body Problem}}.
\newblock {\em Journal of Mathematical Analysis and Applications},
  203(1):55--77, October 1996.

\bibitem{elbialy1996flow}
Mohamed~Sami ElBialy.
\newblock The flow of the {{N}}-body problem near a
  simultaneous-binary-collision singularity and integrals of motion on the
  collision manifold.
\newblock {\em Archive for Rational Mechanics and Analysis}, 134(4):303--340,
  1996.

\bibitem{elphickSimpleGlobalCharacterization1987}
C.~Elphick, E.~Tirapegui, M.~E. Brachet, P.~Coullet, and G.~Iooss.
\newblock A simple global characterization for normal forms of singular vector
  fields.
\newblock {\em Physica D: Nonlinear Phenomena}, 29(1):95--127, November 1987.

\bibitem{lombardi2010normal}
Eric Lombardi and Laurent Stolovitch.
\newblock Normal forms of analytic perturbations of quasihomogeneous vector
  fields: {{Rigidity}}, invariant analytic sets and exponentially small
  approximation.
\newblock {\em Annales Scientifiques de l'{\'E}cole Normale Sup{\'e}rieure},
  pages 659--718, 2010.

\bibitem{Martinez1999}
Regina Mart{\'i}nez and Carles Sim{\'o}.
\newblock Simultaneous binary collisions in the planar four-body problem.
\newblock {\em Nonlinearity}, 12(4):903, 1999.

\bibitem{Martinez2000}
Regina Mart{\'i}nez and Carles Sim{\'o}.
\newblock The degree of differentiability of the regularization of simultaneous
  binary collisions in some {{N}}-body problems.
\newblock {\em Nonlinearity}, 13(6):2107, 2000.

\bibitem{McGehee1974}
Richard McGehee.
\newblock Triple collision in the collinear three-body problem.
\newblock {\em Inventiones Mathematicae}, 27(3):191--227, 1974.

\bibitem{murdock2006normal}
James Murdock.
\newblock {\em Normal Forms and Unfoldings for Local Dynamical Systems}.
\newblock {Springer}, 2006.

\bibitem{Ouyang2009}
Tiancheng Ouyang and Zhifu Xie.
\newblock Regularization of simultaneous binary collisions and solutions with
  singularities in the collinear four-body problem.
\newblock {\em Discrete and Continuous Dynamical Systems}, 24:909--932, 2009.

\bibitem{punovsevac2012regularization}
Predrag Puno{\v s}evac and Qiudong Wang.
\newblock Regularization of simultaneous binary collisions in some
  gravitational systems.
\newblock {\em The Rocky Mountain Journal of Mathematics}, pages 257--283,
  2012.

\bibitem{roussarieBifurcationPlanarVector1998}
Robert Roussarie.
\newblock {\em Bifurcation of Planar Vector Fields and {{Hilbert}}'s Sixteenth
  Problem}, volume 164 of {\em Progress in {{Mathematics}}}.
\newblock {Birkh{\"a}user Verlag, Basel}, 1998.

\bibitem{saariManifoldStructureCollision1984}
Donald~G Saari.
\newblock The manifold structure for collision and for hyperbolic-parabolic
  orbits in the n-body problem.
\newblock {\em Journal of Differential Equations}, 55(3):300--329, December
  1984.

\bibitem{Siegel2012}
Carl~L Siegel and J{\"u}rgen~K Moser.
\newblock {\em Lectures on {{Celestial Mechanics}}: {{Reprint}} of the 1971
  {{Edition}}}.
\newblock {Springer}, 2012.

\bibitem{simoSomeQuestionsLooking}
Carles Sim{\'o}.
\newblock Some questions looking for answers in dynamical systems.
\newblock {\em Discrete and Continuous Dynamical Systems - A},
  38(12):6215--6239, 2018.

\bibitem{Simo1992}
Carles Sim{\'o} and Ernesto~A Lacomba.
\newblock Regularization of simultaneous binary collisions in the n-body
  problem.
\newblock {\em Journal of Differential Equations}, 98(2):241--259, 1992.

\end{thebibliography}
	\bibliographystyle{hplain}

\end{document}